\numberwithin{equation}{section}
\let\pa\partial
\let\na\nabla
\let\eps\varepsilon
\newcommand{\N}{{\mathbb N}}
\newcommand{\R}{{\mathbb R}}
\newcommand{\diver}{\operatorname{div}}
\newcommand{\diverm}{\operatorname{div}_{M}}
\renewcommand{\d}{\,\textnormal{d}}
\newcommand{\dt}{\,\textnormal{d}t}
\newcommand{\ds}{\,\textnormal{d}s}
\newcommand{\dr}{\,\textnormal{d}r}
\newcommand{\dx}{\,\textnormal{d}x}
\newcommand{\dW}{\,\textnormal{d}W}
\newcommand{\dd}{\mathrm{d}}
\newcommand{\dom}{\mathcal{O}}
\newcommand{\E}{{\mathbb E}}
\newcommand{\F}{{\mathbb F}}
\newcommand{\Prob}{{\mathbb P}}
\newcommand{\DD}{\mathrm{D}}
\renewcommand{\L}{{\mathcal L}}
\newcommand{\correction}{\frac{1}{N}}
\newcommand{\sqrtcorrection}{\sqrt{\frac{1}{N}}}
\newcommand{\Ared}{\widetilde{A}}
\newtheorem{theorem}{Theorem}
\newtheorem{lemma}[theorem]{Lemma}
\newtheorem{proposition}[theorem]{Proposition}
\newtheorem{remark}[theorem]{Remark}
\newtheorem{corollary}[theorem]{Corollary}
\newtheorem{definition}{Definition}
\newtheorem{assumption}[theorem]{Assumption} 
\begin{document}
\title[Stochastic Shigesada--Kawasaki--Teramoto model]{Fluctuation Correction and Global Solutions for the Stochastic Shigesada--Kawasaki--Teramoto System via Entropy-Based Regularization}

\author[F. Huber]{Florian Huber}
\email{dr.huber.florian@gmail.com}
\thanks{Parts of this paper were completed during the author's affiliation with the \'Ecole Polytechnique F\'ed\'erale de Lausanne (EPFL), Switzerland.}
\date{\today}

\thanks{} 

\begin{abstract}
We study a stochastic extension of the $n$-species Shigesada--Kawasaki--Teramoto (SKT) cross-diffusion system, in which a multiplicative noise term accounts for fluctuation corrections to the mean-field dynamics arising from a finite underlying population. The noise we consider is motivated by, but not rigorously derived from, the particle-system approximation of Chen, Daus, Holzinger, and J\"ungel; it only partially respects the entropy (gradient-flow) structure of the deterministic system, which is the main source of the technical difficulties addressed in this paper. For the resulting system of stochastic partial differential equations (SPDEs), we establish the existence of nonnegative, global, weak martingale solutions, under a detailed-balance condition on the diffusion coefficients and a smallness condition relating the noise intensity to these coefficients. This smallness condition amounts to a sufficiently large population size~$N$ (equivalently, small fluctuation strength~$1/N$): the existence theory thus operates in the perturbative regime around the deterministic mean-field limit, and for fixed coefficients it is always satisfiable once $N$ is large enough.
\end{abstract}

% \paragraph{Keywords:}  
\keywords{Population dynamics, cross-diffusion, martingale solutions, 
multiplicative noise, entropy method.}  
 
% \paragraph{AMS classification:}  
\subjclass[2020]{60H15, 35R60, 35Q92.}

\maketitle
\tableofcontents

%%%%%%%%%%%%%%%%%%%%%%%%%%%%%%%%%%%%%%%%%%%%%%%%%%%%%%%%%%%%%%%%%%%%%%%%%%%%%%%

\section{Introduction}

In their seminal work \cite{SKT79}, Shigesada, Kawasaki, and Teramoto (SKT) introduced a deterministic cross-diffusion system for two competing species, designed to model segregation dynamics in population interactions. The deterministic SKT system generalizes, for an arbitrary number $n$ of species, to
\begin{equation}\label{eq:SKT_intro_deterministic}
  \pa_t u_i - \diver\bigg(\sum_{j=1}^n A_{ij}(u)\na u_j\bigg) = 0
	\quad\mbox{in }\dom,\ t>0,\ i=1,\ldots,n,
\end{equation}
with no-flux boundary conditions, where $u_i$ denotes the population density of the $i$th species, $\dom\subset\R^d$ ($d\ge 1$) is a bounded domain, and
\begin{equation}\label{eq:SKT_intro_matrix_A}
  A_{ij}(u) = \delta_{ij}\bigg(a_{i0} + \sum_{k=1}^n a_{ik}u_k\bigg) + a_{ij}u_i,
	\quad i,j=1,\ldots,n.
\end{equation}
This model can be rigorously derived from nonlocal population models \cite{DiMo21, Mou20}, stochastic interacting particle systems \cite{chen_holzinger_2021_rigorous_SKT}, and finite-state jump Markov models \cite{BMM21, DDD19}. Unlike the original SKT system \cite{SKT79}, we omit deterministic environmental potentials and Lotka--Volterra terms for simplicity. We refer to $a_{i0}$ as the diffusion coefficients, $a_{ii}$ as the self-diffusion coefficients, and $a_{ij}$ for $i\neq j$ as the cross-diffusion coefficients.

A natural stochastic extension of \eqref{eq:SKT_intro_deterministic}, motivated by fluctuating hydrodynamics, accounts for the fact that \eqref{eq:SKT_intro_deterministic} only describes the mean-field, infinite-population limit of an underlying system of finitely many, discretely interacting individuals. Fluctuations around this mean-field limit are expected to be of order $1/\sqrt{N}$ in the total population size $N$ \cite{chen_holzinger_2021_rigorous_SKT}, and a formal computation based on the particle system (we comment on this heuristic below, and develop it rigorously in a companion paper) suggests the noise should enter \eqref{eq:SKT_intro_deterministic} multiplicatively, componentwise, with intensity governed by the same coefficients $a_{ij}$ that drive the deterministic diffusion. This leads us to study the stochastic system
\begin{equation}\label{eq:SKT_intro}
  \dd u_i - \diver\bigg(\sum_{j=1}^n A_{ij}(u)\na u_j\bigg) \dt
	= \sqrtcorrection\diver\left( \sigma_{i}(u)\circ_{\lambda}\dW_{i}(t)\right)\quad\mbox{in }\dom,\ t>0,\ i=1,\ldots,n,
\end{equation}
with initial and no-flux boundary conditions
\begin{equation}\label{eq:SKT_intro_bic}
  u_i(0)=u_i^0\quad\mbox{in }\dom, \quad
	\sum_{j=1}^n A_{ij}(u)\na u_j\cdot\nu = 0\quad\mbox{on }\pa\dom,\ t>0,\
	i=1,\ldots,n,
\end{equation}
and noise coefficient
\begin{align}\label{eq:SKT_intro_diffusion_sigma}
   \sigma(u)_{ij}=\delta_{ij}\sqrt{u_{i}\left(a_{i0}+\sum_{k=1}^{n}a_{ik}u_{k}\right)},\quad i,j=1,\dots,n.
\end{align}
Here $\nu$ denotes the exterior unit normal vector to $\pa\dom$, $a_{ij}\ge 0$ for $i=1,\ldots,n$, $j=0,\ldots,n$ are parameters, and $(W_1,\ldots,W_n)$ is an $n$-dimensional, spatially colored Wiener noise; the precise functional setting, including the family $(e^{il}_k)_{ilk}\colon\dom\to\R$ that determines the spatial color of $W$, is given in Section~\ref{sec.main}. The parameter $\frac{1}{N}\in(0,1]$ in \eqref{eq:SKT_intro} is the inverse of the (common) population size of the $n$ species: it quantifies the strength of the fluctuations, and as $N\to\infty$ the noise vanishes and \eqref{eq:SKT_intro} reduces to the deterministic system \eqref{eq:SKT_intro_deterministic}.

The symbol $\circ_\lambda$ denotes a \emph{$\lambda$-modified It{\^o}--Stratonovich correction}: we set
\begin{align*}
    \sqrtcorrection\diver\left( \sigma_{i}(u)\circ_{\lambda}\dW_{i}(t)\right):=\sqrtcorrection\diver\left( \sigma_{i}(u)\dW_{i}(t)\right)+ \correction\lambda  \mathcal{T}(u)_{i}\dt,
\end{align*}
where, for a parameter $\lambda\ge 0$ to be fixed below,
\begin{align}\label{eq:T_intro}
    \mathcal{T}(u)_{i}=\sum_{k=1}^{\infty}\sum_{l=1}^{d}\partial_{x_{l}}\left(\left(\partial_{u_{i}}\sigma\right)(u)_{ii}\,e^{il}_{k}e^{il}_{k}\partial_{x_{l}}\sigma(u)_{ii}+\left(\partial_{u_{i}}\sigma\right)(u)_{ii}\,e^{il}_{k}\partial_{x_{l}}e^{il}_{k}\,\sigma(u)_{ii}\right).
\end{align}
The classical It\^o--Stratonovich correction, obtained from \eqref{eq:T_intro} at $\lambda=\frac12$, converts the Stratonovich noise $\sigma_i(u)\circ\dW_i$ into the equivalent It\^o form $\sigma_i(u)\dW_i + \frac12\mathcal T(u)_i\dt$; we allow $\lambda\neq\frac12$ as a tunable parameter, since (as we explain in Remark~\ref{rem:correction_motivation} below) the value $\lambda=\frac12$ is not in general compatible with the entropy structure that drives our existence proof, while other values of $\lambda$ are. In fact, our main theorem requires strictly more correction than the classical Stratonovich choice, in the sense that $\lambda>\frac12$. The choice of $\lambda$ does not affect the order of approximation of \eqref{eq:SKT_intro} to the underlying fluctuations, only the particular It\^o representative within the family of equivalent Stratonovich formulations; we discuss this point, and the precise sense in which \eqref{eq:SKT_intro}--\eqref{eq:SKT_intro_diffusion_sigma} model the fluctuations of the particle system, in Remark~\ref{rem:particle_motivation}.

\begin{remark}[Heuristic motivation from the particle system]\label{rem:particle_motivation}\rm
The deterministic system \eqref{eq:SKT_intro_deterministic}--\eqref{eq:SKT_intro_matrix_A} was rigorously derived in \cite{chen_holzinger_2021_rigorous_SKT} as the mean-field limit, as $N\to\infty$, of a system of $N$ moderately interacting diffusions per species. The central limit fluctuations of the corresponding empirical measures around this mean-field limit are governed by a martingale that may be read off from the particle system as follows. Each particle $X^{N}_{k,i}$ of species $i$ carries the diffusion coefficient $\big(2\sigma_i+2\sum_{j=1}^n f_\eta(\tfrac1N\sum_{\ell}B_{ij}^\eta(X_{k,i}-X_{\ell,j}))\big)^{1/2}$ of \cite{chen_holzinger_2021_rigorous_SKT}; applying It\^o's formula to the rescaled empirical field $\tfrac1{\sqrt N}\sum_{k=1}^N\varphi(X^{N}_{k,i})$ and retaining only its stochastic part yields the fluctuation martingale
\begin{align*}
    M_{i,N}(t,\varphi)=\int_0^t\frac1{\sqrt N}\sum_{k=1}^N\nabla\varphi(X^{N}_{k,i})\Big(2\sigma_i+2\sum_{j=1}^n f_\eta\big(\tfrac1N{\textstyle\sum_{\ell}}B_{ij}^\eta(X_{k,i}-X_{\ell,j})\big)\Big)^{1/2}\dd W_i^k(s).
\end{align*}
Passing formally to the mean-field limit $\tfrac1N\sum_m\delta_{X^{N}_{m,i}}\to u_i$, the quadratic variation of $M_{i,N}$ acquires the structure
\begin{align*}
    \E\left[M_{i}(t,\varphi)M_{k}(s,\psi)\right]=\delta_{ik}\int_{0}^{s\wedge t}\left\langle u_{i},\nabla\varphi\, \nabla\psi\left(2 \sigma_i+2 \sum_{j=1}^n f\left(a_{ij}u_{j}\right)\right)\right\rangle\dr,
\end{align*}
for suitable test functions $\varphi,\psi$, where $u=(u_1,\ldots,u_n)$ solves the mean-field equation \eqref{eq:SKT_intro_deterministic}; see \cite[Lemmas 9--10]{chen_holzinger_2021_rigorous_SKT}. This is exactly the covariance structure produced by the multiplicative noise $\sigma_i(u)\dW_i$ in \eqref{eq:SKT_intro}, and is what motivates our choice of noise coefficient \eqref{eq:SKT_intro_diffusion_sigma}. A rigorous derivation of this fluctuation limit -- identifying $M_i$ as a stochastic integral against a colored Wiener process and justifying the passage from the particle system to the SPDE \eqref{eq:SKT_intro} -- requires substantial additional work (tightness of the fluctuation martingales, identification of the limit on a suitable weighted Sobolev scale, and control of the mean-field convergence rate); we carry this out separately in forthcoming work. In the present paper we take \eqref{eq:SKT_intro}--\eqref{eq:SKT_intro_diffusion_sigma} as given and study the resulting SPDE on its own terms.
\end{remark}

Recent developments in fluctuating hydrodynamics (e.g., \cite{fehrman2023non,fehrman2024well}) provide a complementary perspective on noise terms of this type. As in \cite{fehrman2023non,fehrman2024well}, the noise in \eqref{eq:SKT_intro} must be sufficiently smooth (spatially colored, rather than space-time white) for the SPDE to be well-posed; otherwise, as for the Dean--Kawasaki equation, the system becomes supercritical in the framework of singular SPDEs \cite{hairer2014theory}.

Previous studies of \eqref{eq:SKT_intro}--\eqref{eq:SKT_intro_matrix_A} have considered multiplicative It{\^o} noise that preserves certain structural properties of the system. For example, the existence of a local pathwise mild solution with $n=2$ was established in \cite[Theorem 4.3]{KuNe20} under the assumption that the diffusion matrix is positive definite and the noise coefficient satisfies standard Lipschitz conditions. A related system, incorporating quadratic rather than linear coefficients, was analyzed under detailed balance and small cross-diffusion coefficients in \cite{DJZ19}. However, these works introduce significant simplifications, and the structural preservation assumed in previous studies does not hold in our present setting.

The system \eqref{eq:SKT_intro} presents two primary challenges. First, the diffusion matrix $A(u)$ is generally nonsymmetric and lacks positive semidefiniteness, which renders standard semigroup theory inapplicable. In the deterministic case, methods relying on the gradient-flow (or entropy) structure, implicit Euler time discretization, and the Leray-Schauder fixed-point theorem have been successfully employed (see \cite{ChJu04, ChJu06, Jue15}). However, in the stochastic setting, an explicit Euler scheme is necessary to accommodate the stochastic It\^o integral, precluding the use of entropy estimates. Alternatively, a Galerkin scheme, as detailed in \cite[Theorem 4.2.4]{LiRo15}, reduces the system to a finite-dimensional setting but relies on energy-type ($L^{2}$) estimates. In contrast, our system admits only entropy estimates involving the test function $\log(u_{i})$, which lies outside the Galerkin space. Global martingale solutions for an SKT system with general coefficients satisfying detailed balance were established in \cite{braukhoff2024global}, even in the absence of self-diffusion, by leveraging a novel regularization technique.

The second challenge is that the noise \eqref{eq:SKT_intro_diffusion_sigma}, motivated in Remark~\ref{rem:particle_motivation}, only partially aligns with the entropy structure of the system. Unlike scalar equations, this noise does not conform to the form dictated by the full gradient-flow structure, $\diver(B^{1/2}(u)\dW_t)$, where $B$ is introduced below in \eqref{1.w}--\eqref{1.gradient_flow}; a noise based on this formal gradient-flow structure has instead been used as a fluctuation correction in numerical simulations of multi-component fluid mixtures \cite{donev2015low}. The particle-system heuristic of Remark~\ref{rem:particle_motivation} indicates that, for the SKT system, the physically appropriate fluctuation correction depends instead on a component-wise mobility, which is precisely the source of the second challenge: the entropy dissipation generated by \eqref{eq:SKT_intro_diffusion_sigma} together with its It\^o--Stratonovich correction $\mathcal T$ does not have a definite sign, and controlling it forces the restriction $\lambda>1/2$ in our main theorem, together with a smallness condition on $1/N$ (Assumption \ref{Assumption:A4_correction_factor} below).

Our strategy for proving existence of solutions relies on the system's entropy, or formal gradient-flow, structure. As indicated above, the noise \eqref{eq:SKT_intro_diffusion_sigma} only partially respects this structure, which is the source of the additional technical difficulties addressed in Sections~\ref{sec.approx}--\ref{sec.uniform} below.
We say \eqref{eq:SKT_intro_deterministic} has an entropy structure if there exists a 
function $h:[0,\infty)^n\to[0,\infty)$, called an entropy density, 
such that the deterministic analog of \eqref{eq:SKT_intro} 
can be written in terms of the entropy variables
(or chemical potentials) $w_i=\pa h/\pa u_i$ as
\begin{equation}\label{1.w}
  \pa_t u_i(w) - \diver\bigg(\sum_{j=1}^n B_{ij}(w)\na w_j\bigg) = 0, \quad
	i=1\ldots,n,
\end{equation}
where $w=(w_1,\ldots,w_n)$, $u_i$ is interpreted as a function of $w$, 
and $B(w)=A(u(w))h''(u(w))^{-1}$ 
with $B=(B_{ij})$ is positive semidefinite. Formally, one could read this representation as
\begin{equation}\label{1.gradient_flow}
  \pa_t u_i - \diver\bigg(\sum_{j=1}^n B_{ij}(w)\na D\mathcal{H}(u)_j\bigg) = 0, \quad
	i=1\ldots,n,
\end{equation}
for a suitable entropy functional $\mathcal{H}=\int_{\dom}h(u)\dx$.
For the deterministic analog of \eqref{eq:SKT_intro}, it was 
shown in \cite{CDJ18} that a useful entropy density is given by
\begin{equation}\label{1.h}
  h(u) = \sum_{i=1}^n\pi_i \big(u_i(\log u_i-1)+1\big), \quad u\in[0,\infty)^n,
\end{equation}
where the numbers $\pi_i>0$ are assumed to satisfy the so-called detailed-balance condition:
$\pi_ia_{ij}=\pi_ja_{ji}$ for all $i,j=1,\ldots,n$.
For the Markov chain associated
to $(a_{ij})$, and $(\pi_1,\ldots,\pi_n)$, this condition corresponds to reversibility (\cite{CDJ18}). 
A formal computation yields that, for the deterministic SKT system under the detailed-balance condition,
\begin{equation}\label{1.ei}
  \frac{\dd}{\dt}\int_\dom h(u)\dx 
	+ 2\int_\dom\sum_{i=1}^n\pi_i\bigg(2a_{i0}|\na\sqrt{u_i}|^2
	+ 2a_{ii}|\na u_i|^2 + \sum_{j\neq i}a_{ij}|\na\sqrt{u_iu_j}|^2\bigg)\dx  = 0.
\end{equation}
A similar expression can be derived in the stochastic setting; see \eqref{eqn:e_sup_ito_estimate_final}. Not only does the entropy structure provide us with bounds on $u_{i}$, it gives $L^2$ estimates for $\na\sqrt{u_i}$ if $a_{i0}>0$ and for
$\na u_i$ if $a_{ii}>0$. Solving the system in terms of the entropy variables $w$ leads to the positivity
of $u_i(w)=\exp(w_i/\pi_i)$.

We will use this entropy structure combined with a regularization scheme, that preserves the entropy estimates and nonnegativity after passing to the 
de-regularization limit. The main idea of this scheme is to ``regularize'' the entropy
variable $w$. To be specific, we perturb the mapping $w\mapsto u(w)$,
and define $Q_\eps(w)=u(w) + \eps L^*Lw$, where 
$L:D(L)\to H$ with domain $D(L)\subset H$ is a 
suitable operator and $L^*$ its dual.
The operator $L$ is chosen in such a way that all elements of $D(L)$ are 
bounded functions, implying that $u(w)$ is well-defined.
It can be shown that the mapping $Q_\eps:D(L)\to D(L)'$ is invertible. We will use its inverse, denoted by $R_\eps:D(L)'\to D(L)$ as a regularization operator for our approximation scheme to \eqref{eq:SKT_intro}. The approximated equation can be written as
\begin{equation}\label{1.approx}
  \dd v(t) = \diver\big(B(R_\eps(v))\na R_\eps(v)\big)\dt
	+ \sqrtcorrection\diver\left(\sigma\big(u(R_\eps(v))\big)\cdot \circ_{\lambda} \dW(t)\right), \quad t>0.
\end{equation}

The existence of a local solution $v^\eps$ to \eqref{1.approx} with suitable
initial and boundary conditions can be shown by applying standard results, e.g.
\cite[Theorem 4.2.4]{LiRo15}. The entropy
inequality for $w^\eps:=R_\eps(v^\eps)$ and $u^\eps:=u(w^\eps)$,
\begin{align*}
  \E&\sup_{0<t<T\wedge\tau_R}\int_\dom h(u^\eps(t))\dx 
	+ \frac{\eps}{2}\E\sup_{0<t<T\wedge\tau_R}\|Lw^\eps(t))\|_{L^2(\dom)}^2 \\
	&{}+ \E\sup_{0<t<T\wedge\tau_R}\int_0^t\int_\dom
	\na w^\eps(s):B(w^\eps(s))\na w^\eps(s)\dx \ds\leq C(u^0,T),
\end{align*}
up to some stopping time $\tau_R>0$ allows us to extend the local solution to a 
global one. 
The entropy inequality provides suitable, uniform in $\eps$, bounds for $u_i^\eps$, which can be further refined by the Gagliardo--Nirenberg inequality to
prove uniform bounds for $u_i^\eps$ in $L^q(0,T;L^q(\dom))$ with $q\ge 2$. 
Such an estimate is crucial to define, for instance, the product $u_i^\eps u_j^\eps$. 
Uniform estimate for $u_i^\eps$ in the 
Sobolev--Slobodeckij space $W^{\alpha,p}(0,T;D(L)')$ for some $\alpha\in(0,1)$ and
$p>1$ with $\alpha p>1$ then allows us to prove the tightness of the laws of
$(u^\eps)$ in some sub-Polish space and to conclude strong convergence in $L^2$
thanks to the Skorokhod--Jakubowski theorem.

This regularization scheme, together with the entropy structure, is the basis for our main result: under the detailed-balance condition and a smallness condition on $1/N$ relative to the coefficients of $A$, the system \eqref{eq:SKT_intro}--\eqref{eq:SKT_intro_diffusion_sigma} admits a global, nonnegative martingale solution; see Theorem~\ref{thm:existence_SKT} below for the precise statement. We emphasise at the outset that this is an existence theory in the perturbative, large-population regime: the smallness of $1/N$ (made precise in Assumption~\ref{Assumption:A4_correction_factor}) is the price paid for a noise that only partially respects the entropy structure, and, for fixed coefficients, it holds for all $N$ sufficiently large (Corollary~\ref{cor:A4_A5_example}).

\begin{remark}\label{rem:correction_motivation}\rm
The restriction $\lambda>1/2$ in Theorem~\ref{thm:existence_SKT} originates in the entropy estimate of Proposition~\ref{prop.ent}: the It\^o--Stratonovich correction $\mathcal T$ contributes a term to the entropy inequality that is not sign-definite, and $\lambda>1/2$ is what allows us to absorb its bad part into the (sign-definite) dissipation generated by $\mathcal T$ itself, at the cost of a smallness condition on $1/N$. We do not know whether $\lambda>1/2$ is sharp; see the discussion in Section~\ref{sec.outlook}.
\end{remark}

This paper is organized as follows. We present our notation, assumptions, and main
result in Section \ref{sec.main}. The operators needed to define the
approximative scheme are introduced in Section \ref{sec.op}. In Section \ref{sec.approx}, the existence of local solutions to a general approximate equation is proven. Section \ref{sec.uniform} derives estimates uniform in the regularization parameters, in particular the entropy inequality controlling the It\^o--Stratonovich correction term, extends the local solutions to global ones, and passes to the limit in the regularization via tightness arguments, concluding the proof of Theorem~\ref{thm:existence_SKT}. We close with a discussion of open questions in Section~\ref{sec.outlook}. The proofs of several technical lemmas are collected in Appendix \ref{app:proofs_of_lemmata}.

%%%%%%%%%%%%%%%%%%%%%%%%%%%%%%%%%%%%%%%%%%%%%%%%%%%%%%%%%%%%%%%%%%%%%%%%%%%%%%%

\section{Notation and main result}\label{sec.main}

\subsection{Notation and stochastic framework}

Let $\dom\subset\R^d$ ($d\ge 1$) be a bounded domain. The Lebesgue and Sobolev
spaces are denoted by $L^p(\dom)$ and $W^{k,p}(\dom)$, respectively, where
$p\in[1,\infty]$, $k\in\N$, and $H^k(\dom)=W^{k,2}(\dom)$. 
For notational simplicity, we generally do not distinguish between $W^{k,p}(\dom)$
and $W^{k,p}(\dom;\R^n)$.
We set $H_N^m(\dom) = \{v\in H^m(\dom):\na v\cdot\nu=0$ on $\pa\dom\}$ for $m\ge 2$.
If $u=(u_1,\ldots,u_n)\in X$
is some vector-valued function in the normed space $X$, we write
$\|u\|_X^2=\sum_{i=1}^n\|u_i\|_X^2$. The inner product of a Hilbert space $H$
is denoted by $(\cdot,\cdot)_H$, and $\langle\cdot,\cdot\rangle_{V',V}$ is the dual
product between the Banach space $V$ and its dual $V'$. If $F:U\to V$
is a Fr\'echet differentiable function between Banach spaces $U$ and $V$, 
we write $\DD F[v]:U\to V$ for its Fr\'echet derivative, for any $v\in U$.

Given two quadratic matrices $A=(A_{ij})$, $B=(B_{ij})\in\R^{n\times n}$, 
$A:B=\sum_{i,j=1}^n A_{ij}B_{ij}$ is the Frobenius matrix product,
$\|A\|_F=(A:A)^{1/2}$ the Frobenius norm of $A$, and $\operatorname{tr}A
=\sum_{i=1}^n A_{ii}$ the trace of $A$. The constants $C>0$
in this paper are generic and their values change from line to line.
To avoid confusion, we will also highlight when we use the matrix divergence of a matrix-valued function $\R^{d}\ni x\mapsto M(x)\in \R^{n\times d}$, which is a vector in $\R^{n}$ with entries $\diverm(M)_{i}=\sum_{j=1}^{d}\partial_{x_{j}}M_{ij}$, for $i=1,\dots,n$.

Let $(\Omega,\mathcal{F},\Prob)$ be a probability space endowed with a complete
right-continuous filtration $\F=(\mathcal{F}_t)_{t\ge 0}$ and let 
$H$ be a Hilbert space.
Then $L^0(\Omega;H)$ consists of all measurable functions from $\Omega$ to $H$, and 
$L^2(\Omega;H)$ consists of all $H$-valued random variables $v$ such that
$\E\|v\|_H^2=\int_\Omega\|v(\omega)\|_H^2\Prob(\dd \omega)<\infty$. 
Let $U$ be a separable Hilbert space 
and $(e_k)_{k\in\N}$ be an orthonormal basis of $U$. 
The space of Hilbert--Schmidt operators from $U$ to $H$ is defined by
$$
  \L_2(U;H) = \bigg\{F:U\to H \mbox{ linear, continuous}:
	\sum_{k=1}^\infty\|Fe_k\|_{H}^2 < \infty\bigg\},
$$
and it is endowed with the norm $\|F\|_{\L_2(U;H)} 
= (\sum_{k=1}^\infty\|Fe_k\|_{H}^2)^{1/2}$. 

Let $W=(W_1,\ldots,W_n)$ be an $n$-dimensional $U$-cylindrical Wiener process,
taking values in the separable Hilbert space $U_0\supset U$ 
and adapted to the filtration $\F$. We can write
$W^{ij}=\sum_{k=1}^\infty e^{ij}_k W^{ij}_k$, where 
$(W^{ij}_k)$ is a sequence of independent standard
one-dimensional Brownian motions \cite[Section 4.1.2]{DaZa14}, such that $\E[W^{i_{1}j_{1}}_{k_{1}}(t)W^{i_{2}j_{2}}_{k_{2}}(s)]=\delta_{i_{1}i_{2}}\delta_{j_{1}j_{2}}\delta_{k_{1}k_{2}} t\wedge s$. 
Then $W^{ij}(\omega)\in C^0([0,\infty);U_0)$ for a.e.\ $\omega$ 
\cite[Section 2.5.1]{LiRo15}. The precise regularity required of the family $(e^{il}_k)_{ilk}$, which determines the spatial color of $W$, is given in Assumption \ref{Assumption:A5_noise_onb} below.

\subsection{Assumptions}

We impose the following assumptions:
\begin{assumption}

\begin{enumerate}[label=\normalfont(A\arabic*)]\leavevmode
\item \label{Assumption:A1_domain}  Domain: $\dom\subset\R^d$ ($d\ge 1$) is a bounded domain
with Lipschitz boundary. Let $T>0$ and set $Q_T=\dom\times(0,T)$. 

\item \label{Assumption:A2_initial_condition} Initial datum: $u^0=(u_1^0,\ldots,u_n^0)\in 
L^\infty(\Omega;L^2(\dom;\R^n))$ is a $\mathcal{F}_0$-measurable 
random variable satisfying $u^0(x)\ge 0$ for a.e.\ $x\in\dom$
$\Prob$-a.s.

\item \label{Assumption:A3_SKT_matrix} Diffusion matrix: $a_{ij}\ge 0$ for $i=1,\ldots,n$, $j=0,\ldots,n$ and
there exist $\pi_1,\ldots,\pi_n>0$ such that
$\pi_i a_{ij}=\pi_j a_{ji}$ for all $i,j=1,\ldots,n$ (detailed-balance condition).

\item \label{Assumption:A4_correction_factor} Correction factor: let $\lambda>\frac12$ and $p>2$ be fixed. Further, let $0<\tau<1$ be given. The correction factor $\correction$ satisfies the following condition, for some $0<\kappa,\widetilde\kappa_3\le\frac12$:
\begin{align}
    \frac{4}{\sqrt{N}}\left(\sup_{i,l}\sum_{k=1}^{\infty}\|e^{il}_{k}\|_{L^{\infty}(\dom)}^{2}\right)^{\frac{1}{2}}\sum_{i=1}^{n}\sup_{0\leq j \leq n}a_{ji}\leq \tau,
\end{align}
\begin{align}
     \sqrtcorrection 3^{\frac{p-1}{p}} \left(\frac{p}{p-1}\right) 2^{\frac{3}{2}}\left(\sup_{i,l}\sum_{k=1}^{\infty}\|e^{il}_{k}\|_{L^{\infty}(\dom)}^{2}\right)^{\frac{1}{2}}\leq \tau,
\end{align}
\begin{align}
    \frac4{\sqrt N}\Big(\sup_{i,l}\sum_{k=1}^{\infty}\|e^{il}_{k}\|_{L^{\infty}(\dom)}^{2}\Big)^{\frac12}\leq 4\tau\pi_ia_{i0}\qquad\text{for every }i=1,\ldots,n,
\end{align}
\begin{align}
    \correction\, K_j(\lambda,\kappa,\widetilde\kappa_3)\left(\sup_{i,l}\sum_{k=1}^{\infty}\|e^{il}_{k}\|_{L^{\infty}}^{2}\right)\leq \tau 2\pi_{j}a_{jj}\qquad\text{for every }j=1,\ldots,n,
\end{align}
where
\begin{align*}
    &K_j(\lambda,\kappa,\widetilde\kappa_3):=\Big(\frac{2\lambda+1}{8\kappa}+\frac18\Big)\beta_j+\frac{\widetilde\kappa_3}4\gamma_j+\frac{\lambda\kappa}4\pi_ja_{jj},\\
&\beta_j:=\sum_{i=1}^n\frac{\pi_ia_{ij}}{a_{i0}}\Big(\sum_{m=1}^na_{im}\Big),\qquad
\gamma_j:=\sum_{i=1}^n\pi_ia_{ij}\Big(\sum_{m=1}^na_{im}\Big).
\end{align*}

\item \label{Assumption:A5_noise_onb} There exists a separable Hilbert space $U$ with a continuous embedding $U\hookrightarrow W^{s,2}(\dom)^{n}$ for some $s>d+1$, and the family $(e^{ij}_{k})_{ijk}$ ($i=1,\dots,n$, $j=1,\dots,d$, $k\in\N$) is an orthonormal basis of $U$ satisfying the summability conditions
\begin{align*}
    E_{\infty}:=\sup_{i,l}\sum_{k=1}^{\infty}\|e^{il}_{k}\|_{L^{\infty}(\dom)}^{2}<\infty,\qquad
    \sup_{i,l}\sum_{k=1}^{\infty}\|\partial_{x_{l}}e^{il}_{k}\|_{L^{\infty}(\dom)}^{2}<\infty.
\end{align*}
\end{enumerate}
\end{assumption}
\begin{remark}
    We will also refer to $W$ simply as spatially colored noise or $W^{s,2}(\dom)$-regular noise, instead of the $U$-cylindrical Wiener process of Assumption \ref{Assumption:A5_noise_onb}, where $U\hookrightarrow W^{s,2}(\dom)^{n}$. A concrete admissible choice of $U$ and of the basis $(e^{ij}_{k})$ is exhibited in Corollary \ref{cor:A4_A5_example}.
\end{remark}
We also introduce the following assumptions, which will be used in an intermediate step.
\begin{assumption}[Auxiliary Assumptions]\leavevmode
\begin{enumerate}[label=\normalfont(A\arabic*)]
\setcounter{enumi}{5}
\item \label{Assumption:A6_noise_coefficient} Multiplicative noise: $\Sigma=(\Sigma_{ij})$ is an $n\times n$ matrix, 
where $\Sigma_{ij}:L^2(\dom;\R^n)\to
\L_2(U;L^2(\dom))$ is $\mathcal{B}(L^2(\dom;\R^n))/$ $\mathcal{B}
(\L_2(U;L^2(\dom)))$-measurable and $\F$-adapted. 
Furthermore, there exists $C_\sigma>0$ such that
for all $u$, $v\in L^2(\dom;\R^n)$,
\begin{align*}
  \|\diver(\Sigma(u)-\Sigma(v))\|_{\L_2(U;L^2(\dom))} 
	&\leq C_\Sigma\|u-v\|_{D(L)}, \\
  \|\diver(\Sigma(v))\|_{\L_2(U;L^2(\dom))} &\leq C_\Sigma(1+\|v\|_{D(L)}).
\end{align*}
\item \label{Assumption:A7_correction_term} Correction term: $\overline{\mathcal{T}}=(\overline{\mathcal{T}}_{i})$ is an $n$ dimensional vector, 
where $\overline{\mathcal{T}}_{i}:D(L)^{\prime}\to
D(L)^{\prime}$ is measurable and $\F$-adapted. 
Furthermore, there exists $C_{\overline{\mathcal{T}}}>0$ such that
for all $v_{1}$, $v_{2}\in D(L)^{\prime}$,
\begin{align*}
  \|\overline{\mathcal{T}}(v_{1})-\overline{\mathcal{T}}(v_{2})\|_{D(L)^{\prime}} 
	&\leq C_{\overline{\mathcal{T}}}\|v_{1}-v_{2}\|_{D(L)^{\prime}}, \\
  \|\overline{\mathcal{T}}(v_{1})\|_{D(L)^{\prime}} &\leq C_{\overline{\mathcal{T}}}(1+\|v_{1}\|_{D(L)^{\prime}}).
\end{align*}
\end{enumerate}
\end{assumption}
\begin{remark}[Discussion of the assumptions]\leavevmode
\begin{itemize}
\item \ref{Assumption:A1_domain}: The Lipschitz regularity of the boundary $\pa\dom$ is needed to apply
the Sobolev and Gagliardo--Nirenberg inequalities.

\item \ref{Assumption:A2_initial_condition}: The regularity condition on $u^0$ can be
weakened to $u^0\in L^p(\Omega;L^2(\dom;\R^n))$ for sufficiently large $p\ge 2$
(only depending on the space dimension); it is used to derive the higher-order moment
estimates.

\item \ref{Assumption:A3_SKT_matrix} The detailed-balance condition is also needed in the deterministic
case to reveal the entropy structure of the system; see \cite{CDJ18}.

\item \ref{Assumption:A4_correction_factor} This is the most restrictive assumption, particularly for ``small'' population sizes $N$; as $N$ grows, the right-hand sides of the four inequalities are unaffected while the left-hand sides shrink like $1/\sqrt N$ and $1/N$, so the assumption becomes easy to satisfy. The restriction $\lambda>\frac12$ is what makes the entropy estimate of Proposition \ref{prop.ent} close (see the discussion in the introduction); we do not know whether it is sharp. The fourth inequality (see the proof of Proposition \ref{prop.ent} for the origin of $\beta_j,\gamma_j,K_j$) becomes more restrictive as the number of species $n$ grows: under the additional uniform bound $a_{ij}\leq\bar a$ ($1\leq i,j\leq n$), $\beta_j,\gamma_j=O(n^2)$, so that for many species, a correspondingly smaller $1/N$ (i.e., a larger population) is required for our existence theorem to apply. We illustrate in Corollary \ref{cor:A4_A5_example} below that Assumption \ref{Assumption:A4_correction_factor} is non-vacuous: it is satisfied, for any fixed $n$, by an explicit family of noise coefficients once $1/N$ is small enough. The third inequality governs the absorption, into the dissipation term $4\pi_ia_{i0}|\nabla\sqrt{u_i^\eps}|^2$, of the term produced by the Burkholder--Davis--Gundy estimate in the proof of Proposition \ref{prop.ent} (see \eqref{eqn:BDG_stochastic_integral} there); it replaces an earlier version of this inequality, calibrated against a since-corrected form of Lemma \ref{lem:is_correction_entropy_estimate}, whose right-hand side no longer plays any role here. All four inequalities now use the same threshold $\tau$: this is what allows the single, combined absorption step in the proof of Proposition \ref{prop.ent} to produce one explicit constant $c_\tau=\frac1{1-\tau}$, rather than four unrelated, unnamed constants.
\item \ref{Assumption:A5_noise_onb} is a regularity requirement on the noise leading to the same conditions as in \cite{perkowski_2024weak}.
\item \ref{Assumption:A6_noise_coefficient} \ref{Assumption:A7_correction_term}: The Lipschitz continuity and linear growth of $\Sigma(u)$ and $\overline{\mathcal{T}}(v)$
are used for an auxiliary step, leading to a general statement which is easier citable later on.
\end{itemize}
\end{remark}

\begin{remark}[Essential versus technical hypotheses]\label{rem:sharpness}
It is worth separating the structural hypotheses from the technical ones. The genuinely essential assumptions are the detailed-balance condition~\ref{Assumption:A3_SKT_matrix} -- which is precisely what endows the system with the entropy density~\eqref{1.h}, and whose removal is an open problem already for the deterministic multi-species system~\cite{CDJ18} -- and the spatial colouredness of the noise built into~\ref{Assumption:A5_noise_onb}, without which the equation would be supercritical in the sense of singular SPDEs, as for the Dean--Kawasaki equation~\cite{hairer2014theory,fehrman2024well}. The remaining hypotheses are technical: \ref{Assumption:A2_initial_condition} is only used through the initial entropy $\int_\dom h(u^0)\dx$ and the higher moments, and may be weakened accordingly; the Sobolev index $s>d+1$ in~\ref{Assumption:A5_noise_onb} is dictated by the general-domain eigenbasis of Corollary~\ref{cor:A4_A5_example} (on the box it may be lowered to $s>\tfrac d2+1$), the operative requirement being only the two summability conditions there; and \ref{Assumption:A6_noise_coefficient}--\ref{Assumption:A7_correction_term} merely record the hypotheses of the abstract existence theorem, verified for the concrete regularised coefficients in Lemma~\ref{lem:is_linear_growth_lipschitz}. The smallness condition~\ref{Assumption:A4_correction_factor} occupies an intermediate position: its constants are far from optimal and only the scaling of the left-hand sides in $1/\sqrt N$ and $1/N$ is essential, but whether existence survives at $1/N=O(1)$, and whether the coupling to $\lambda>\tfrac12$ is a genuine obstruction, remain open (Section~\ref{sec.outlook}).
\end{remark}

\begin{lemma}[An explicit admissible noise basis on a box]\label{lem:eigenbasis_summable}
Let $\dom=(0,\ell)^d$ for some $\ell>0$, and let $s>\tfrac d2+1$. For $m=(m_1,\dots,m_d)\in\N_0^d$ let
\begin{align*}
  \phi_m(x):=\prod_{r=1}^d c_{m_r}\cos\!\Big(\tfrac{\pi m_r x_r}{\ell}\Big),\qquad c_0:=\ell^{-1/2},\quad c_j:=(2/\ell)^{1/2}\ (j\ge1),
\end{align*}
be the $L^2(\dom)$-orthonormal Neumann eigenfunctions of $-\Delta$ on $\dom$, with $-\Delta\phi_m=\mu_m\phi_m$, $\mu_m=\pi^2|m|^2/\ell^2$, and $\na\phi_m\cdot\nu=0$ on $\pa\dom$. Then, for all $m\in\N_0^d$ and $r\in\{1,\dots,d\}$,
\begin{align*}
  \|\phi_m\|_{L^\infty(\dom)}\le c_\ell:=(2/\ell)^{d/2},\qquad \|\partial_{x_r}\phi_m\|_{L^\infty(\dom)}\le \tfrac{\pi}{\ell}\,c_\ell\,|m|,
\end{align*}
and the rescaled family $\psi_m:=(1+\mu_m)^{-s/2}\phi_m$ satisfies
\begin{align*}
  \sum_{m\in\N_0^d}\|\psi_m\|_{L^\infty(\dom)}^2\le c_\ell^2\sum_{m\in\N_0^d}(1+\mu_m)^{-s},\qquad
  \sum_{m\in\N_0^d}\|\partial_{x_r}\psi_m\|_{L^\infty(\dom)}^2\le \Big(\tfrac{\pi c_\ell}{\ell}\Big)^2\sum_{m\in\N_0^d}|m|^2(1+\mu_m)^{-s},
\end{align*}
both series being finite whenever $s>\tfrac d2+1$. Moreover, endowing $\operatorname{span}\{\phi_m\}$ with the inner product $\langle f,g\rangle_s:=\sum_m(1+\mu_m)^s\hat f_m\hat g_m$ (where $\hat f_m=(f,\phi_m)_{L^2(\dom)}$) and completing, the family $(\psi_m)_m$ is an orthonormal basis of the resulting Neumann-adapted space $\mathcal H^s_N(\dom):=\{f:\ \langle f,f\rangle_s<\infty\}$, which embeds continuously into $W^{s,2}(\dom)$.
\end{lemma}
\begin{proof}
The eigenpairs $(\mu_m,\phi_m)$ and their $L^2(\dom)$-orthonormality are the classical separation-of-variables computation for the Neumann Laplacian on a box. The two pointwise bounds follow from $|\cos|\le1$, $|\sin|\le1$, $\prod_r|c_{m_r}|\le(2/\ell)^{d/2}$, and $m_r\le|m|$ (for the derivative, $\partial_{x_r}\phi_m$ carries the single extra factor $\pi m_r/\ell$). Since $1+\mu_m=1+\pi^2|m|^2/\ell^2\ge\min(1,\pi^2/\ell^2)(1+|m|^2)$, we have $(1+\mu_m)^{-s}\le C_\ell(1+|m|^2)^{-s}$, so by comparison with $\int_{\R^d}(1+|x|^2)^{-s}\dx$ the first series converges iff $2s>d$ and the second (carrying an extra $|m|^2$) iff $2s>d+2$; both hold for $s>\tfrac d2+1$. Finally $\langle\psi_m,\psi_{m'}\rangle_s=(1+\mu_m)^{-s/2}(1+\mu_{m'})^{-s/2}(1+\mu_m)^s\delta_{mm'}=\delta_{mm'}$, so $(\psi_m)_m$ is orthonormal and, being the image of the $L^2$-complete system $(\phi_m)_m$, complete in $\mathcal H^s_N(\dom)$; the embedding $\mathcal H^s_N(\dom)\hookrightarrow W^{s,2}(\dom)$ is the equivalence of the spectral and Sobolev norms on this subspace.
\end{proof}

\begin{corollary}[Assumptions \ref{Assumption:A4_correction_factor}--\ref{Assumption:A5_noise_onb} are simultaneously satisfiable]\label{cor:A4_A5_example}
Let $\dom=(0,\ell)^d$ and fix any $s>d+1$ (so in particular $s>\tfrac d2+1$). Enumerate the family $(\psi_m)_{m\in\N_0^d}$ of Lemma \ref{lem:eigenbasis_summable} as $(\psi_k)_{k\in\N}$, and place an independent copy of it in each of the $nd$ components: for each pair $(i,l)$ let $(e^{il}_k)_{k\in\N}$ be a copy of $(\psi_k)_{k\in\N}$ acting in the $(i,l)$-component. Then $(e^{ij}_k)_{ijk}$ is an orthonormal basis of $U:=\mathcal H^s_N(\dom)^{n}$, which embeds continuously into $W^{s,2}(\dom)^{n}$, and Lemma \ref{lem:eigenbasis_summable} yields
\begin{align*}
  E_\infty=\sup_{i,l}\sum_{k=1}^\infty\|e^{il}_k\|_{L^\infty(\dom)}^2<\infty, \qquad \sup_{i,l}\sum_{k=1}^\infty\|\partial_{x_l}e^{il}_k\|_{L^\infty(\dom)}^2<\infty,
\end{align*}
so Assumption \ref{Assumption:A5_noise_onb} holds. Consequently, for any fixed $n\in\N$, $\lambda>\tfrac12$, $0<\kappa,\widetilde\kappa_3\le\tfrac12$, and any coefficients $a_{ij},\pi_i$ satisfying Assumption \ref{Assumption:A3_SKT_matrix} with $a_{i0},a_{ii}>0$, the four inequalities of Assumption \ref{Assumption:A4_correction_factor} hold for all $1/N$ sufficiently small, since each has left-hand side proportional to $1/\sqrt N$ or $1/N$ and finite, $N$-independent right-hand side (through the now-finite constant $E_\infty$). In particular, Assumptions \ref{Assumption:A1_domain}--\ref{Assumption:A5_noise_onb} are simultaneously satisfiable, and Theorem \ref{thm:existence_SKT} is non-vacuous.
\end{corollary}

\begin{remark}[General domains]\rm
The box $\dom=(0,\ell)^d$ serves only to make the construction fully explicit; it is a bounded Lipschitz domain, as required by Assumption \ref{Assumption:A1_domain}. On a bounded domain with $C^\infty$ boundary the same conclusion holds with the genuine Neumann eigenbasis $(\phi_k)_{k\in\N}$ in place of the product cosines: by Weyl's law $\mu_k\sim c_{d,\dom}k^{2/d}$ \cite{netrusov2005weyl,ivrii2016,safarov_vassiliev}, together with the ultracontractivity bound $\|\phi_k\|_{L^\infty}\lesssim(1+\mu_k)^{d/4}$ and its gradient counterpart $\|\na\phi_k\|_{L^\infty}\lesssim(1+\mu_k)^{(d+2)/4}$ -- both obtained from the Gaussian bounds on the Neumann heat kernel and its spatial derivative on a smooth domain \cite{ouhabaz2009analysis} -- the rescaled family $\psi_k=(1+\mu_k)^{-s/2}\phi_k$ satisfies $\sum_k\|\psi_k\|_{L^\infty}^2\lesssim\sum_k(1+\mu_k)^{d/2-s}$ and $\sum_k\|\na\psi_k\|_{L^\infty}^2\lesssim\sum_k(1+\mu_k)^{(d+2)/2-s}$, finite for $s>d$ and $s>d+1$ respectively. The derivative heat-kernel bound is the point at which $C^\infty$ (rather than merely Lipschitz) regularity is used; since one admissible configuration already suffices for non-vacuousness, we do not pursue the general case.
\end{remark}

\subsection{Main results}

Let $T>0$, $m\in\N$ with $m>d/2+1$ and $D(L)=H_N^m(\dom)$.

\begin{definition}[Martingale solution]\label{def:martingale_solution_SKT}
Let $\lambda> \frac12$ be fixed as in Assumption \ref{Assumption:A4_correction_factor}. A {\em martingale solution} to \eqref{eq:SKT_intro}--\eqref{eq:SKT_intro_matrix_A} is the triple
$(\widetilde U,\widetilde W,\widetilde u)$ such that $\widetilde U
=(\widetilde \Omega,\widetilde{\mathcal F},\widetilde\Prob,\widetilde\F)$
is a stochastic basis with filtration 
$\widetilde \F=(\widetilde{\mathcal F}_t)_{t\ge 0}$, 
$\widetilde W$ is an $n$-dimensional, spatially colored Wiener process ($W^{s,2}(\dom)$ cylindrical, where $s>d+1$, as in Assumption \ref{Assumption:A5_noise_onb}),
and $\widetilde u=(\widetilde u_1,\ldots,\widetilde u_n)$ 
is a continuous $D(L)'$-valued $\widetilde{\F}$-adapted process such that
$\widetilde u_i\ge 0$ a.e.\ in $\dom\times(0,T)$ $\widetilde\Prob$-a.s., 
\begin{equation}\label{2.regul}
  \widetilde u_i\in L^0(\widetilde \Omega;C^0([0,T];D(L)'))\cap
		L^0(\widetilde \Omega;L^2(0,T;H^1(\dom))),\qquad
		\nabla\log\widetilde u_i\in L^0(\widetilde \Omega;L^2(0,T;L^2(\dom))),
\end{equation}
the law of $\widetilde u_i(0)$ is the same as for $u_i^0$,
and for all $\phi\in D(L)$, $t\in(0,T)$, $i=1,\ldots,n$, $\widetilde \Prob$-a.s.,
\begin{align}\label{eqn:martingale_solution_weak}
  \langle\widetilde u_i(t),\phi\rangle_{D(L)',D(L)} 
	&= \langle\widetilde u_i(0),\phi\rangle_{D(L)',D(L)}
	- \sum_{j=1}^n\int_0^t\int_\dom A_{ij}(\widetilde u(s))
	\na\widetilde u_j(s)\cdot\na\phi\dx \ds\\
  &\phantom{xx}{}+ \correction \sum_{j=1}^n\int_\dom\bigg(\int_0^t\sigma_{ij}(\widetilde u(s))
	\d \widetilde W_j(s)\bigg)\cdot \nabla \phi\dx  \nonumber\\
    &\phantom{xx}+\correction\lambda \int_\dom \int_0^t\mathcal{T}(\widetilde u(s))_{i} \phi \ds \dx \nonumber,
\end{align}
where the entries of the $n\times 1$ dimensional $\lambda$-\em{modified} It{\^o}-Stratonovich correction, evaluated at $\widetilde u(s)$, are given by
\begin{align*}
    \mathcal{T}(\widetilde u(s))_{i}=\sum_{k=1}^{\infty}\sum_{l=1}^{d}\partial_{x_{l}}\left(\left(\partial_{u_{i}}\sigma\right)(\widetilde u(s))_{ii}\,e^{il}_{k}e^{il}_{k}\partial_{x_{l}}\sigma(\widetilde u(s))_{ii}+\left(\partial_{u_{i}}\sigma\right)(\widetilde u(s))_{ii}\,e^{il}_{k}\partial_{x_{l}}e^{il}_{k}\,\sigma(\widetilde u(s))_{ii}\right),
\end{align*}
matching the definition \eqref{eq:T_intro} given in the introduction (here $(\partial_{u_i}\sigma)(\widetilde u(s))_{ii}$ denotes the function $u\mapsto\sigma(u)_{ii}$ differentiated with respect to its $i$th argument $u_i$, with the resulting function subsequently evaluated at $u=\widetilde u(s)$; this is distinct from the spatial derivative $\partial_{x_l}$ appearing elsewhere in the same expression). Here $\partial_{x_l}\log\widetilde u_i$ denotes the $L^2(0,T;L^2(\dom))$ field furnished by \eqref{2.regul} -- equivalently, the a.s.\ limit of $\partial_{x_l}\log u_i^\eps$ obtained in Section~\ref{sec.conv1}. With this convention $\mathcal T(\widetilde u)_i$ is well defined in spite of the factor $1/\widetilde u_i$ carried by $(\partial_{u_i}\sigma)(\widetilde u)_{ii}\,\partial_{x_l}\sigma(\widetilde u)_{ii}$: this product equals $\tfrac14\big(\Ared_i(\widetilde u)+a_{ii}\widetilde u_i\big)\big(\Ared_i(\widetilde u)\,\partial_{x_l}\log\widetilde u_i+\partial_{x_l}\Ared_i(\widetilde u)\big)/\Ared_i(\widetilde u)$, which is an $L^1(\dom)$ function because $\Ared_i(\widetilde u)\ge a_{i0}>0$.
\end{definition}

Our main results read as follows.

\begin{theorem}[Existence for the SKT model]\label{thm:existence_SKT}\sloppy
Let $\lambda>\frac12$, and let Assumptions \ref{Assumption:A1_domain}--\ref{Assumption:A4_correction_factor} be satisfied (with this $\lambda$) and $a_{i0}, a_{ii}>0$ for $i=1,\ldots,n$.
Then \eqref{eq:SKT_intro}--\eqref{eq:SKT_intro_matrix_A}
has a global nonnegative martingale solution in the sense of Definition \ref{def:martingale_solution_SKT}.
\end{theorem}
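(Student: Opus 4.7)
The plan is to follow the entropy-based regularization strategy outlined in the introduction, combining the scheme of \cite{braukhoff2024global} with the additional technical complications coming from the particle-system noise. For each regularization parameter $\eps>0$ I would first construct a solution to the regularized equation \eqref{1.approx}. With $D(L)=H_N^m(\dom)$ and $m>d/2+1$, the map $Q_\eps(w)=u(w)+\eps L^*Lw$ is invertible as $Q_\eps\colon D(L)\to D(L)'$, and its inverse $R_\eps$ lands in $D(L)\subset L^\infty(\dom)$ so that $u_i^\eps=\exp(w_i^\eps/\pi_i)$ is well-defined and automatically nonnegative. The variational framework of \cite[Theorem 4.2.4]{LiRo15} then applies: the drift $v\mapsto\diver(B(R_\eps(v))\na R_\eps(v))$ is coercive and hemi-continuous on $D(L)$ thanks to the $\eps L^*L$ perturbation, and both $\Sigma(u(R_\eps(\cdot)))$ and the correction term $\mathcal{T}(R_\eps(\cdot))$ satisfy the Lipschitz and linear-growth bounds of Assumptions \ref{Assumption:A6_noise_coefficient}--\ref{Assumption:A7_correction_term}. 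This yields a probabilistically strong solution $v^\eps$ up to a localizing stopping time $\tau_R$.

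Second, I would establish the entropy inequality. Applying It\^o's formula to $\int_\dom h(u^\eps)\dx+\tfrac{\eps}{2}\|Lw^\eps\|_{L^2}^2$ with the entropy density \eqref{1.h} reproduces the deterministic dissipation of \eqref{1.ei} together with quadratic-variation terms of order $1/N$ and a contribution from the $\lambda$-corrector $\mathcal{T}$. The critical step is to absorb the $1/N$-contributions into the dissipation. Expanding the Hessian $\pa^2 h/\pa u_i\pa u_j$ against $\sigma(u^\eps)e^{il}_k\otimes\sigma(u^\eps)e^{il}_k$ and summing over $k,l$ produces integrands controlled by $a_{i0}|\na\sqrt{u_i^\eps}|^2$ and $a_{ii}|\na u_i^\eps|^2$ plus lower-order pieces; Assumption \ref{Assumption:A4_correction_factor} is calibrated precisely so that these can be absorbed. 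Combining with Burkholder--Davis--Gundy to handle the supremum in time of the It\^o integral gives
\begin{align*}
\E\sup_{0<t<T\wedge\tau_R}\int_\dom h(u^\eps(t))\dx+\eps\,\E\sup_{0<t<T\wedge\tau_R}\|Lw^\eps(t)\|_{L^2}^2+\E\int_0^{T\wedge\tau_R}\sum_i\|\na u_i^\eps\|_{L^2}^2\ds\le C,
\end{align*}
uniformly in $\eps$ and $R$. Letting $R\to\infty$ globalizes $v^\eps$ on $[0,T]$.

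Third, I would prove tightness and pass to the limit. The entropy bounds together with Gagliardo--Nirenberg yield uniform control of $u^\eps$ in $L^q(0,T;L^q(\dom))$ for some $q>2$, which is needed to define the cross-diffusion fluxes $a_{ij}u_i^\eps\na u_j^\eps$ in the limit. Temporal regularity is obtained by bounding each of the three contributions to $\dd u^\eps$ in $W^{\alpha,p}(0,T;D(L)')$: the divergence drift via the $H^1$-bound of step two, the It\^o integral via the factorization method (which yields H\"older regularity below $1/2$ once $\alpha p>1$), and the corrector via Assumption \ref{Assumption:A7_correction_term}. Choosing $\alpha<1/2$ and $p>2$ with $\alpha p>1$, standard compact embeddings give tightness of $(u^\eps)$ in a sub-Polish space such as $L^2(0,T;L^2(\dom))\cap C^0([0,T];D(L)'_{\mathrm{w}})$. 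The Skorokhod--Jakubowski theorem then produces a new stochastic basis together with an a.s.\ convergent subsequence $(\widetilde u^\eps,\widetilde W^\eps)\to(\widetilde u,\widetilde W)$. The $\eps L^*Lw^\eps$-term disappears in the limit since $\sqrt{\eps}\,Lw^\eps$ stays bounded; the deterministic drift passes by dominated convergence using the $L^q$-bound; the It\^o integral is identified by the Debussche--Glatt-Holtz--Temam convergence lemma; and nonnegativity of $\widetilde u_i$ is inherited from the positivity of $u_i^\eps=\exp(w_i^\eps/\pi_i)$ via a.s.\ $L^2$-convergence.

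The principal obstacle I anticipate is identifying the limit of the $\lambda$-correction $\mathcal{T}(u^\eps)\to\mathcal{T}(\widetilde u)$ in a topology compatible with the weak formulation \eqref{eqn:martingale_solution_weak}. Because $\mathcal{T}$ involves a spatial derivative of products of $\sigma(u)$, the basis elements $e^{il}_k$, and $\pa_{x_l}e^{il}_k$, the double series defining $\mathcal{T}$ must be shown to converge absolutely and continuously in $u$. Assumption \ref{Assumption:A5_noise_onb} with $s>d+1$ ensures that $\sum_k\|e^{il}_k\|_{L^\infty(\dom)}^2$ and $\sum_k\|\pa_{x_l}e^{il}_k\|_{L^\infty(\dom)}^2$ are finite, while the $L^q$-bound on $u^\eps$ transfers via $\sigma_{ii}(u)=\sqrt{u_i(a_{i0}+\sum_k a_{ik}u_k)}$ to Lebesgue control of $\sigma(u^\eps)$ and its first derivatives. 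A truncation of the $k$-series combined with dominated convergence then yields the required convergence in $D(L)'$, closing the identification and completing the proof.
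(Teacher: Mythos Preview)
Your overall strategy matches the paper's, and the three-step outline (regularized existence via \cite{LiRo15}, uniform entropy inequality, tightness plus Skorokhod--Jakubowski) is correct. Two points, however, are genuine gaps.

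First, a small but necessary one: the diffusion coefficient $\sigma_{ii}(u)=\sqrt{u_i\Ared_i(u)}$ is \emph{not} Lipschitz in $u$ near $u_i=0$, so Assumption \ref{Assumption:A6_noise_coefficient} is not satisfied as written and the variational existence step fails. The paper handles this by introducing a $C^1$ approximation $g_\delta$ of the square root (so $\sigma_\delta(u)_{ii}=g_\delta(u_i\Ared_i(u))$), proves local existence for $\sigma_\delta$, and removes $\delta$ simultaneously with $\eps$ in the limit. You should flag this extra layer of regularization.

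Second, and more substantively, your proposed mechanism for passing to the limit in $\mathcal{T}$ does not work. Expanding $(\partial_{u_i}\sigma_{ii})(\partial_{x_l}\sigma_{ii})$ gives, among other things, the term $\Ared_i(u^\eps)\,\partial_{x_l}\log u_i^\eps$. The singularity $1/u_i^\eps$ in $\partial_{u_i}\sigma$ means that no $L^q$-bound on $u^\eps$ alone controls this product; you need an $L^2$-bound on $\nabla\log u_i^\eps=\nabla w_i^\eps$ that is uniform in $\eps$. In the paper this comes from an \emph{additional} dissipation term in the entropy inequality, namely the $\tfrac{1}{N}$-weighted contribution $\int\Ared_i(u^\eps)(\partial_{x_l}u_i^\eps/u_i^\eps)^2$ produced by the interplay of the It\^o correction and the $\lambda$-corrector (Proposition~\ref{prop.ent} and Corollary~\ref{cor:nabla_log_u_estimate}); it is precisely here that the condition $\lambda>\tfrac12$ is used. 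The paper then carries $\nabla w^\eps$ as a \emph{separate} variable through the Skorokhod--Jakubowski construction so that on the new basis $\nabla\widetilde w^\eps\rightharpoonup\nabla\widetilde w$ weakly in $L^2(0,T;L^2(\dom))$, and combines this weak convergence with the strong $L^2$-convergence of $\widetilde u^\eps$ to identify the limit of $\mathcal{T}$ term by term. Your ``truncation of the $k$-series plus dominated convergence'' sketch misses both the source of the needed bound and the need to track $\nabla w^\eps$ through Skorokhod. Relatedly, for the time-regularity step you cannot invoke Assumption~\ref{Assumption:A7_correction_term} for $\mathcal{T}$, since its constant depends on $\eps$; the paper instead estimates $\|\mathcal{T}(u^\eps)\|_{D(L)'}$ directly using the same $\nabla\log u^\eps$ bound.
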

\begin{remark}
    Equation \eqref{eqn:martingale_solution_weak} can also be written as
    \begin{align*}
  \langle\widetilde u_i(t),\phi\rangle_{D(L)',D(L)} 
	&= \langle\widetilde u_i(0),\phi\rangle_{D(L)',D(L)}
	- \int_0^t\int_\dom \widetilde u_i(s)
	\bigg(a_{i0}+\sum_{j=1}^n a_{ij}\widetilde u_j(s)\bigg)\Delta\phi\dx \ds\\
  &\phantom{xx}{}+ \sum_{j=1}^n\int_\dom\bigg(\int_0^t\sigma_{ij}(\widetilde u(s))
	\circ_{\lambda}\dd\widetilde W_j(s)\bigg)\cdot \nabla \phi\dx .
\end{align*}
for all $\phi\in D(L)\cap W^{2,\infty}(\dom)$. 
\end{remark}

\begin{remark}[Nonnegativity of the solution]\rm
The a.s.\ nonnegativity of the population densities is a consequence of the
entropy structure, since the approximate densities $u_i^\eps$ satisfy
$u_i^\eps = u_i(R_\eps(v^\eps)) = \exp(R_\eps(v^\eps)/\pi_i)>0$ a.e.\ in $Q_T$.
\end{remark}

\section{Operator setup}\label{sec.op}

In this section, we review the operators' properties in the approximate scheme laid out in \cite{braukhoff2024global}. 

\subsection{Definition of the connection operator \texorpdfstring{$L$}{L}}

We define an operator $L$ that ``connects'' two Hilbert spaces $V$ and $H$ satisfying
$V \subset H$. This abstract operator defines a regularization operator
that ``lifts'' the dual space $V'$ to $V$. 

\begin{proposition}[Operator $L$]\label{prop.L}
Let $V$ and $H$ be separable Hilbert spaces 
such that the embedding $V\hookrightarrow H$ 
is continuous and dense. Then there exists a bounded, self-adjoint, positive operator
$L:D(L)\to H$ with domain $D(L)=V$.
Moreover, it holds for $L$ and its dual operator $L^*:H\to V'$ (we identify
$H$ and its dual $H'$) that, for some $0<c<1$,
\begin{equation}\label{3.L}
  c\|v\|_V \leq \|L(v)\|_H = \|v\|_V, \quad \|L^*(w)\|_{V'}\leq \|w\|_H, 
	\quad v\in V,\ w\in H.
\end{equation}
\end{proposition}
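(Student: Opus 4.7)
The plan is to construct $L$ via the form method associated with the Gelfand triple $V \hookrightarrow H \hookrightarrow V'$. The bilinear form $a(u,v) := (u,v)_V$ is symmetric, continuous, and coercive on $V \times V$; since $V$ is continuously and densely embedded in $H$, it may be viewed as a densely defined, closed, positive symmetric form on $H$ with form domain $V$. By Kato's first representation theorem (equivalently, the standard construction of a positive self-adjoint operator from a closed positive form), there is a unique positive self-adjoint operator $A$ on $H$ with form domain $D(A^{1/2}) = V$ satisfying $(A^{1/2} u, A^{1/2} v)_H = (u,v)_V$ for all $u,v \in V$.

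Setting $L := A^{1/2}$, one immediately obtains $D(L) = V$ and $L$ positive and self-adjoint on $H$. The identity
\[
 \|L v\|_H^2 = (A^{1/2} v, A^{1/2} v)_H = (v,v)_V = \|v\|_V^2
\]
shows that $L \colon V \to H$ is an isometry, hence bounded of operator norm $1$; this gives the middle equality in \eqref{3.L}, and the lower estimate $c\|v\|_V \le \|L v\|_H$ holds trivially for any $c \in (0,1)$.

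For the dual estimate, since $L \colon V \to H$ is bounded, the Banach adjoint $L^* \colon H^* \to V'$ is bounded with the same operator norm. Under the identification $H^* \cong H$ made in the statement, $L^*$ is characterized by $\langle L^* w, v \rangle_{V',V} = (w, L v)_H$ for all $v \in V$ and $w \in H$. Cauchy--Schwarz combined with the isometry identity then yields
\[
 \bigl| \langle L^* w, v \rangle_{V',V} \bigr| \le \|w\|_H \|L v\|_H = \|w\|_H \|v\|_V,
\]
so that $\|L^* w\|_{V'} \le \|w\|_H$, completing \eqref{3.L}.

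The only substantive step is the invocation of the form representation theorem: one must verify that $(\cdot,\cdot)_V$ really induces a closed, positive, densely defined symmetric form on $H$ with form domain exactly $V$, which is standard and follows at once from the continuous dense embedding $V \hookrightarrow H$. Once this functional-analytic step is in place the isometry computation and the dual bound are routine, and I expect no further obstacle.
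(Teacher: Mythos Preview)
The paper does not actually supply a proof of this proposition; it merely states the result as part of a review of the operator framework from \cite{braukhoff2024global} and then applies it with $V=H^m_N(\dom)$, $H=L^2(\dom)$. So there is no in-paper argument to compare against.

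Your construction via Kato's form representation theorem is correct and is the standard way to produce such an $L$: the symmetric coercive form $a(u,v)=(u,v)_V$ on $V\times V$ is closed on $H$ with form domain $V$, the associated positive self-adjoint operator $A$ has $D(A^{1/2})=V$, and $L:=A^{1/2}$ satisfies $\|Lv\|_H=\|v\|_V$. The dual bound then follows from Cauchy--Schwarz exactly as you write. One small remark: the word ``bounded'' in the statement must be read as bounded from $(V,\|\cdot\|_V)$ to $H$, not as bounded on $H$ (where $L$ is genuinely unbounded unless $V=H$); your isometry identity makes this interpretation explicit. With that reading your argument is complete.
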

\begin{proof}[Construction sketch]
The details are in \cite[Sec.~2.7]{huber2022stochastic} (see also \cite[Thm.~1.12]{KPS82}); we sketch the construction. By the Riesz representation theorem applied to the bounded linear maps $v\mapsto(v,w)_H$ on $V$, there is a bounded, injective operator $G\colon H\to V$, self-adjoint and positive as an operator on $H$, with $(v,w)_H=(v,Gw)_V$ for all $v\in V,\ w\in H$. Its inverse $\Lambda:=G^{-1}$ (with domain $\operatorname{ran}(G)\subset V$) is densely defined, self-adjoint and positive on $H$ and satisfies $(v,\Lambda w)_H=(v,w)_V$. Setting $L:=\Lambda^{1/2}$ yields a positive self-adjoint operator with $D(L)=V$ for which, by the spectral calculus, $\|Lv\|_H^2=(\Lambda v,v)_H=(v,v)_V=\|v\|_V^2$, i.e.\ $\|Lv\|_H=\|v\|_V$; the equivalence of $\|\cdot\|_V$ with the graph norm of $L$ gives the lower bound $c\|v\|_V\le\|Lv\|_H$, and $\|L^*w\|_{V'}=\sup_{\|v\|_V=1}|(w,Lv)_H|\le\|w\|_H$ the last bound in \eqref{3.L}. In the application below ($V=H^m_N(\dom)$, $H=L^2(\dom)$), $\Lambda$ is the self-adjoint elliptic operator associated with the $H^m_N$-inner product and $L=\Lambda^{1/2}$ is a corresponding order-$m$ Neumann operator.
\end{proof}

We apply Proposition \ref{prop.L} to $V=H^m_N(\dom)$ and $H=L^2(\dom)$, recalling
that $H^m_N(\dom)=\{v\in H^m(\dom):\na v\cdot\nu=0$ on $\pa\dom\}$ and $m>d/2+1$.
Then, by Sobolev's embedding, $D(L)\hookrightarrow W^{1,\infty}(\dom)$.
Note the following two properties, that will be used later:
\begin{align}\label{3.LL}
  & \|L^*L(v)\|_{V'}\leq \|v\|_V, \quad \|L^*(w)\|_{V'}\leq \|w\|_H
	\quad\mbox{for all }v\in V,\ w\in H. 
\end{align}

\begin{lemma}[Operator $L^{-1}$]\label{lem.L1}
Let $L^{-1}:\operatorname{ran}(L)\to D(L)$ be the inverse of $L$ and let
$D(L^{-1}):=\overline{D(L)}$ be the closure of $D(L)$ 
with respect to $\|L^{-1}(\cdot)\|_H$.
Then $D(L)'$ is isometric to $D(L^{-1})$.
In particular, it holds that
$(L^{-1}(v),L^{-1}(w))_{H} = (v,w)_{D(L)'}$ for all $v$, $w\in D(L)'$.
\end{lemma}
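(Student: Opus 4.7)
The plan is to prove the isometric identity $\|L^{-1}(h)\|_H = \|h\|_{V'}$ for $h$ in the natural domain $\operatorname{ran}(L)$ of $L^{-1}$ (with $V = D(L)$), and then to extend by continuity to obtain the isometric isomorphism $D(L^{-1}) \cong D(L)'$; the inner product identity then follows by polarization.

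First I would establish the norm identity for $h \in \operatorname{ran}(L)$, writing $h = L(u)$ with $u = L^{-1}(h) \in V$. The upper bound $\|h\|_{V'} \le \|L^{-1}(h)\|_H$ follows from the self-adjointness of $L$ and the isometry $\|L(v)\|_H = \|v\|_V$ of Proposition~\ref{prop.L}: for any $v \in V$,
\[
  \langle h, v\rangle_{V',V} = (Lu, v)_H = (u, Lv)_H \le \|u\|_H\,\|Lv\|_H = \|L^{-1}(h)\|_H\,\|v\|_V.
\]
For the reverse inequality I would use that $\operatorname{ran}(L)$ is dense in $H$, which is a standard consequence of $L$ being self-adjoint and injective (the latter from the isometry), since then $\operatorname{ran}(L)^\perp = \ker(L) = \{0\}$. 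Consequently $\|u\|_H = \sup_{g \in \operatorname{ran}(L),\,\|g\|_H \le 1}(u,g)_H$, and writing $g = L(w)$ with $\|w\|_V = \|g\|_H \le 1$ and invoking self-adjointness once more gives $(u,g)_H = (Lu,w)_H = \langle h,w\rangle_{V',V} \le \|h\|_{V'}$, so that $\|L^{-1}(h)\|_H \le \|h\|_{V'}$.

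Polarization then upgrades the norm identity to $(L^{-1}(h_1), L^{-1}(h_2))_H = (h_1, h_2)_{V'}$ on $\operatorname{ran}(L)$. Because $V \hookrightarrow H \hookrightarrow V'$ is a Gelfand triple, $H$ is dense in $V'$; combined with the density of $\operatorname{ran}(L)$ in $H$ this shows $\operatorname{ran}(L)$ is dense in $V'$. The canonical inclusion $\operatorname{ran}(L) \hookrightarrow V'$ (via $H \hookrightarrow V'$) is therefore an isometry with dense range when $\operatorname{ran}(L)$ carries the norm $\|L^{-1}(\cdot)\|_H$, and identifying the two completions yields $D(L^{-1}) \cong V' = D(L)'$. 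Extending $L^{-1}$ to all of $D(L)'$ is then automatic: if $v_n \in \operatorname{ran}(L)$ converges to $v$ in $V'$, the isometry forces $(L^{-1}(v_n))$ to be Cauchy in $H$, so $L^{-1}(v) := \lim L^{-1}(v_n)$ is well-defined in $H$, and the inner product identity passes to all of $D(L)'$ by continuity.

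The main obstacle I anticipate is the reverse inequality $\|L^{-1}(h)\|_H \le \|h\|_{V'}$: beyond self-adjointness it relies on the density of $\operatorname{ran}(L)$ in $H$, without which the supremum representation of $\|u\|_H$ cannot be restricted to vectors of the form $L(w)$, $w \in V$, and the duality argument fails to close. Once the norm identity is in hand, everything else is a direct application of polarization and completion in Hilbert spaces.
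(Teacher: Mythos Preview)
The paper does not prove this lemma; Section~\ref{sec.op} merely states it as part of a review of the operator framework taken from \cite{braukhoff2024global}, so there is no in-text proof to compare against. Your argument is correct and is the standard route: the isometry $\|L(v)\|_H=\|v\|_V$ together with self-adjointness yields the two-sided bound $\|h\|_{V'}=\|L^{-1}(h)\|_H$ on $\operatorname{ran}(L)$, and density plus polarization then finishes. The only delicate step is the reverse inequality, and your justification via $\operatorname{ran}(L)^\perp=\ker(L)=\{0\}$ (injectivity following immediately from $\|Lv\|_H=\|v\|_V$) is exactly right. One minor remark: the symbol $D(\Lambda)$ in the lemma statement is not defined elsewhere in the paper and is presumably a vestige from the cited reference; your reading of it as $\operatorname{ran}(L)$, the natural initial domain of $L^{-1}$, is the sensible interpretation.
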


\begin{lemma}[Operator $u$]\label{lem.u}
The mapping $u:=(h')^{-1}$ from $D(L)$ to $L^\infty(\dom)$ is Fr\'echet 
differentiable and, as a mapping from $D(L)$ to $D(L)'$, monotone.
\end{lemma}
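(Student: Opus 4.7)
The plan is to first spell out the explicit form of $u=(h')^{-1}$ using the entropy density from \eqref{1.h}, namely $h'(u)_i = \pi_i \log u_i$, so that $u(w)_i = \exp(w_i/\pi_i)$ componentwise. With this formula, both statements reduce to standard facts about Nemytskii (superposition) operators and pointwise monotonicity of the exponential, once the functional-analytic embeddings are set up.

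For Fréchet differentiability from $D(L)$ to $L^\infty(\dom)$, I would proceed in two steps. First, since $m > d/2+1$, Sobolev embedding gives $D(L) = H^m_N(\dom) \hookrightarrow W^{1,\infty}(\dom) \hookrightarrow L^\infty(\dom)$ continuously, so any $w \in D(L)$ is essentially bounded and $u(w)$ is well-defined in $L^\infty(\dom)$. Second, I would argue that the componentwise Nemytskii operator associated with the smooth function $t \mapsto \exp(t/\pi_i)$ is Fréchet differentiable from $L^\infty(\dom)$ to $L^\infty(\dom)$. This is standard: a pointwise Taylor expansion yields, for any $w, v \in L^\infty(\dom)$ with $\|v\|_{L^\infty}$ small,
\begin{equation*}
\bigl\|\exp((w_i+v_i)/\pi_i) - \exp(w_i/\pi_i) - \tfrac{1}{\pi_i}\exp(w_i/\pi_i)\, v_i\bigr\|_{L^\infty(\dom)} \leq C(\|w\|_{L^\infty}) \|v\|_{L^\infty(\dom)}^2,
\end{equation*}
which identifies the derivative $\DD u[w] v$ componentwise as $\pi_i^{-1}\exp(w_i/\pi_i) v_i$. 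Composing with the continuous linear embedding $D(L) \hookrightarrow L^\infty(\dom)$ transfers the Fréchet differentiability to $D(L) \to L^\infty(\dom)$.

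For monotonicity as a map $D(L) \to D(L)'$, I would use the Gelfand triple $D(L) \hookrightarrow L^2(\dom) \hookrightarrow D(L)'$, which is dense and continuous since $D(L) = H^m_N(\dom)$. Because $u(w^{(1)}), u(w^{(2)}) \in L^\infty(\dom) \subset L^2(\dom) \subset D(L)'$, the duality pairing realizes as the $L^2$ inner product:
\begin{equation*}
\langle u(w^{(1)}) - u(w^{(2)}), w^{(1)} - w^{(2)}\rangle_{D(L)', D(L)} = \int_\dom \sum_{i=1}^n \bigl(\exp(w_i^{(1)}/\pi_i) - \exp(w_i^{(2)}/\pi_i)\bigr)\bigl(w_i^{(1)} - w_i^{(2)}\bigr) \dx.
\end{equation*}
Since $t \mapsto \exp(t/\pi_i)$ is strictly increasing on $\R$, the integrand is pointwise nonnegative, yielding monotonicity (in fact strict monotonicity whenever $w^{(1)} \neq w^{(2)}$ on a set of positive measure).

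The only mildly subtle point, which I would take care to justify, is the identification of the $D(L)', D(L)$ duality bracket with an $L^2$ integral; this is immediate from the Gelfand triple structure, so I do not expect any serious obstacle. Everything else reduces to the pointwise smoothness and monotonicity of the scalar exponential.
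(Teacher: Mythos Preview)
Your proposal is correct and follows the natural approach. The paper does not supply its own proof of this lemma; it is stated in Section~\ref{sec.op} as part of a review of operator properties taken from \cite{braukhoff2024global}, where the argument is essentially the one you outline: explicit componentwise formula $u_i(w)=\exp(w_i/\pi_i)$, Fr\'echet differentiability of the smooth Nemytskii operator on $L^\infty(\dom)$ composed with the Sobolev embedding $D(L)\hookrightarrow L^\infty(\dom)$, and monotonicity via the Gelfand triple and pointwise monotonicity of the exponential.
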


\subsection{Definition of the regularization operator \texorpdfstring{$R_\eps$}{R-epsilon}}

First, we define another operator, denoted by $Q_{\eps}$, that maps $D(L)$ to $D(L)'$. Its inverse
is the desired regularization operator.

\begin{lemma}[Operator $Q_\eps$]\label{lem.Qeps}
Let $\eps>0$ and define $Q_\eps:D(L)\to D(L)'$ by $Q_\eps(w)=u(w)+\eps L^*Lw$, where
$w\in D(L)$. Then $Q_\eps$ is Fr\'echet differentiable, strongly monotone,
coercive, and invertible. 
Its Fr\'echet derivative $\DD Q_\eps[w](\xi)=u'(w)\xi + \eps L^*L\xi$
for $w$, $\xi\in D(L)$ is continuous, strongly monotone, coercive, and
invertible.
\end{lemma}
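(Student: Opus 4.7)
The plan is to establish each property of $Q_\eps$ in the order stated, exploiting the fact that $Q_\eps$ is the sum of the monotone nonlinear map $u$ (studied in Lemma \ref{lem.u}) and the linear, continuous, strongly coercive map $\eps L^*L$ provided by Proposition \ref{prop.L}. The Fréchet derivative is a similar sum where $u$ is replaced by its linear derivative $u'(w)$. Once differentiability, strong monotonicity and coercivity are in place, invertibility of $Q_\eps$ follows from the Browder--Minty surjectivity theorem, while invertibility of the derivative reduces to Lax--Milgram.

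For Fréchet differentiability, I would combine Lemma \ref{lem.u}, which yields differentiability of $u: D(L) \to L^\infty(\dom)$ (and hence into $D(L)'$ through the continuous embedding $L^\infty(\dom) \hookrightarrow L^2(\dom) \hookrightarrow D(L)'$), with the observation that $w \mapsto \eps L^*L w$ is linear and bounded from $D(L)$ to $D(L)'$ by \eqref{3.LL}, so its Fréchet derivative is itself. Adding the two derivatives gives the stated formula $\DD Q_\eps[w](\xi) = u'(w)\xi + \eps L^*L\xi$.

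For strong monotonicity of $Q_\eps$, the monotonicity of $u$ from Lemma \ref{lem.u} contributes a nonnegative term, while the key identity
\begin{equation*}
\langle L^*L(w_1 - w_2), w_1 - w_2\rangle_{D(L)', D(L)}
= \|L(w_1-w_2)\|_H^2 = \|w_1-w_2\|_{D(L)}^2,
\end{equation*}
obtained from the norm equality in Proposition \ref{prop.L}, provides the coercive lower bound. Adding these yields $\langle Q_\eps(w_1) - Q_\eps(w_2), w_1 - w_2\rangle \geq \eps \|w_1-w_2\|_{D(L)}^2$. Coercivity then follows by taking $w_2 = 0$ and absorbing the term $\langle u(0), w\rangle$, which is bounded by $\|u(0)\|_{D(L)'}\|w\|_{D(L)}$, into the quadratic term via Young's inequality. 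For invertibility, $Q_\eps$ is continuous (differentiability implies continuity), strongly monotone and coercive on the reflexive Hilbert space $D(L)$, so Browder--Minty yields surjectivity; injectivity is immediate from strong monotonicity.

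For the derivative, I would note that $u'(w)$ as a linear operator $D(L) \to D(L)'$ is monotone (being the Fréchet derivative of a monotone operator, $\langle u'(w)\xi, \xi\rangle \geq 0$ by differentiating the monotonicity inequality in a direction), and $\eps L^*L$ provides the $\eps\|\xi\|_{D(L)}^2$ lower bound as before; continuity of $\DD Q_\eps[w]$ as a bounded linear operator follows from Lemma \ref{lem.u} (which gives $u'(w)$ bounded into $L^\infty(\dom)$) and boundedness of $L^*L$. Since $\DD Q_\eps[w]$ is then a continuous, coercive bilinear form, Lax--Milgram delivers invertibility. The main technical point, rather than any single difficult estimate, is to make sure that $u'(w)$ is genuinely well-defined and monotone on all of $D(L)$ (not merely at formally smooth points), which is exactly what the combination of Lemma \ref{lem.u} and the embedding $D(L) \hookrightarrow W^{1,\infty}(\dom)$ provides.
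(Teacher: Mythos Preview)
Your proposal is correct and follows the natural line of argument. Note, however, that the present paper does not itself prove Lemma~\ref{lem.Qeps}: Section~\ref{sec.op} explicitly reviews the operator framework from \cite{braukhoff2024global}, and the lemma is stated there without proof. Your reconstruction --- monotonicity of $u$ from Lemma~\ref{lem.u} combined with the coercive contribution $\langle \eps L^*L w,w\rangle_{D(L)',D(L)} = \eps\|Lw\|_H^2 = \eps\|w\|_{D(L)}^2$ from Proposition~\ref{prop.L}, then Browder--Minty for surjectivity of $Q_\eps$ and Lax--Milgram for invertibility of $\DD Q_\eps[w]$ --- is exactly the standard route and is consistent with how the paper uses these properties downstream (e.g.\ in Lemma~\ref{lem.Reps}).
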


Lemma \ref{lem.Qeps} yields the existence of the inverse of $Q_\eps$, which we denote by $R_\eps:=Q_\eps^{-1}:D(L)'\to D(L)$. This operator will be used to regularize our equation and has the following properties.

\begin{lemma}[Operator $R_\eps$]\label{lem.Reps}
The operator $R_\eps:D(L)'\to D(L)$ is Fr\'echet differentiable and strictly monotone.
In particular, it is Lipschitz continuous with Lipschitz constant
$C/\eps$, where $C>0$ does not depend on $\eps$. The Fr\'echet derivative satisfies the relation
$$
  \DD R_\eps[v] = (\DD Q_\eps[R_\eps(v)])^{-1}
	= (u'(R_\eps(v))+\eps L^*L)^{-1} \quad\mbox{for }v\in D(L)',
$$
and it is Lipschitz continuous with constant $C/\eps$, satisfying
$\|\DD R_\eps[v](\xi)\|_{D(L)}\leq \eps^{-1}C\|\xi\|_{D(L)'}$
for $v$, $\xi\in D(L)'$.
\end{lemma}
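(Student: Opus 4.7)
The plan is to derive all four properties of $R_\eps$ from the results already established for $Q_\eps$ in Lemma~\ref{lem.Qeps}, using the inverse function theorem in Banach spaces together with the strong monotonicity estimate provided by the $\eps L^*L$ penalization.

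\textbf{Step 1 (strict monotonicity and Lipschitz constant $C/\eps$).} Given $v_1,v_2\in D(L)'$, set $w_i=R_\eps(v_i)\in D(L)$, so that $Q_\eps(w_i)=v_i$. From the definition $Q_\eps(w)=u(w)+\eps L^*Lw$, the monotonicity of $u$ (Lemma~\ref{lem.u}) together with $\langle L^*L(w_1-w_2),w_1-w_2\rangle_{D(L)',D(L)}=\|L(w_1-w_2)\|_H^2=\|w_1-w_2\|_{D(L)}^2$ (using \eqref{3.L}) yields
\begin{equation*}
\eps\|w_1-w_2\|_{D(L)}^2 \le \langle Q_\eps(w_1)-Q_\eps(w_2),w_1-w_2\rangle_{D(L)',D(L)} = \langle v_1-v_2,R_\eps(v_1)-R_\eps(v_2)\rangle_{D(L)',D(L)} .
\end{equation*}
The right-hand side is bounded by $\|v_1-v_2\|_{D(L)'}\|w_1-w_2\|_{D(L)}$, giving $\|R_\eps(v_1)-R_\eps(v_2)\|_{D(L)}\le \eps^{-1}\|v_1-v_2\|_{D(L)'}$; the same identity also shows strict monotonicity directly.

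\textbf{Step 2 (Fréchet differentiability and derivative formula).} By Lemma~\ref{lem.Qeps}, $Q_\eps$ is Fréchet differentiable and, for every $w\in D(L)$, $\DD Q_\eps[w]:D(L)\to D(L)'$ is a linear isomorphism. Combined with the continuity of $R_\eps$ from Step~1, the Banach-space inverse function theorem applies and gives that $R_\eps$ is Fréchet differentiable with
\begin{equation*}
\DD R_\eps[v] = (\DD Q_\eps[R_\eps(v)])^{-1} = (u'(R_\eps(v))+\eps L^*L)^{-1}, \quad v\in D(L)'.
\end{equation*}

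\textbf{Step 3 (bound on $\DD R_\eps[v](\xi)$).} Let $\xi\in D(L)'$ and set $\eta=\DD R_\eps[v](\xi)\in D(L)$, so $(u'(R_\eps(v))+\eps L^*L)\eta=\xi$. Testing with $\eta$ in the $D(L)',D(L)$ duality, the term $\langle u'(R_\eps(v))\eta,\eta\rangle$ is nonnegative by monotonicity of $u$ (Lemma~\ref{lem.u}), and the isometry property in \eqref{3.L} gives $\langle \eps L^*L\eta,\eta\rangle=\eps\|\eta\|_{D(L)}^2$. Hence
\begin{equation*}
\eps\|\eta\|_{D(L)}^2 \le \langle\xi,\eta\rangle_{D(L)',D(L)} \le \|\xi\|_{D(L)'}\|\eta\|_{D(L)},
\end{equation*}
which yields $\|\DD R_\eps[v](\xi)\|_{D(L)}\le \eps^{-1}\|\xi\|_{D(L)'}$.

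\textbf{Step 4 (Lipschitz continuity of $\DD R_\eps$).} For $v_1,v_2\in D(L)'$, use the standard resolvent identity
\begin{equation*}
\DD R_\eps[v_1]-\DD R_\eps[v_2] = \DD R_\eps[v_1]\bigl(\DD Q_\eps[R_\eps(v_2)]-\DD Q_\eps[R_\eps(v_1)]\bigr)\DD R_\eps[v_2] .
\end{equation*}
Since $\DD Q_\eps[w]-\DD Q_\eps[\tilde w]=u'(w)-u'(\tilde w)$ (the $\eps L^*L$ piece is independent of $w$), the problem reduces to showing that $w\mapsto u'(w)$ is Lipschitz from $D(L)$ into $\L(D(L),D(L)')$. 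This follows from the Fréchet differentiability of $u$ (Lemma~\ref{lem.u}) together with the continuous embedding $D(L)\hookrightarrow W^{1,\infty}(\dom)$ noted after Proposition~\ref{prop.L}, which makes $h''(u(w))$ and its first derivative pointwise bounded and Lipschitz in $w$. Combining this with the $\eps^{-1}$ operator bound for both outer factors from Step~3 and the Lipschitz constant $\eps^{-1}$ of $R_\eps$ from Step~1 yields the claimed $C/\eps$ Lipschitz estimate on $\DD R_\eps$.

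The only nontrivial obstacle is Step~4: one must carefully track how the $\eps$-dependence compounds in the resolvent identity and verify that the operator-norm Lipschitz continuity of $\DD Q_\eps[\cdot]$ indeed carries through in the weak duality framework $D(L)\to D(L)'$, which is exactly where the embedding $D(L)\hookrightarrow W^{1,\infty}(\dom)$ is essential.
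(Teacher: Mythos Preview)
Your Steps~1--3 are correct and in fact already establish everything the lemma asserts. The paper itself does not prove this lemma; Section~\ref{sec.op} merely reviews the operator setup from \cite{braukhoff2024global}, so there is no in-paper proof to compare against. Your argument via the strong monotonicity of $Q_\eps$ and the inverse function theorem is the natural one.

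Step~4, however, rests on a misreading of the statement. The sentence ``it is Lipschitz continuous with constant $C/\eps$, satisfying $\|\DD R_\eps[v](\xi)\|_{D(L)}\leq \eps^{-1}C\|\xi\|_{D(L)'}$'' is speaking of the \emph{linear} operator $\DD R_\eps[v]$ for fixed $v$: a linear map is Lipschitz if and only if it is bounded, and the displayed inequality is precisely that bound. This is exactly your Step~3, and it is how the lemma is used in the paper (see the proof of Proposition~\ref{prop.ent}, where ``Lipschitz continuity of $\DD R_\eps[v]$'' is invoked only to obtain $|\DD^2\mathcal{H}[v](\xi,\xi)|\le C(\eps)\|\xi\|_{D(L)'}^2$). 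The lemma does \emph{not} claim that $v\mapsto \DD R_\eps[v]$ is Lipschitz with constant $C/\eps$. Indeed, your resolvent identity shows why: the three factors contribute $\eps^{-1}$, $C\eps^{-1}\|v_1-v_2\|_{D(L)'}$ (from the Lipschitz bound on $u'$ composed with $R_\eps$), and $\eps^{-1}$, giving an $\eps^{-3}$ scaling, and only locally since the Lipschitz constant of $u'$ depends on $\|R_\eps(v)\|_{D(L)}$. Your own closing caveat about ``tracking how the $\eps$-dependence compounds'' is exactly the symptom of this misreading. Simply drop Step~4; the lemma is proved after Step~3.
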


The operator $R_{\eps}$ can be interpreted as a smoothing mechanism acting on the entropy variables. Its role is to restore sufficient regularity to make the stochastic equation well-posed, while preserving the key structural properties of the original system.
\section{Existence of approximate solutions}\label{sec.approx}

Analogously to  \cite{braukhoff2024global}, we regularize equation \eqref{eq:SKT_intro}--\eqref{eq:SKT_intro_diffusion_sigma} on the level of the so-called entropy variables by introducing the regularized
variable $R_\eps(v)$ for $v\in D(L)'$. 
Setting $v = u(R_\eps(v)) + \eps L^*LR_\eps(v)$, we
consider the regularized problem
\begin{align}\label{eqn:approximate_equation_corr}
  & \dd v = \diverm\big(B(R_\eps(v))\na R_\eps(v)\big)\dt
	+ \sqrtcorrection\diverm\left(\Sigma\big(u(R_\eps(v))\big) \dW(t)\right)+\correction\lambda \overline{\mathcal{T}}(u(R_\eps(v)))\dt,\\
    &\quad\mbox{in }\dom,\ t\in[0,T\wedge\tau), \nonumber\\
	& v(0) = u^0\quad\mbox{in }\dom, \quad 
	\na R_\eps(v)\cdot\nu = 0 \quad\mbox{on }
	\pa\dom,\ t>0, \label{regularized_corr.bic}
\end{align}
recalling that $B(w)=A(u(w))h''(u(w))^{-1}$ for $w\in\R^n$.

$\overline{\mathcal{T}}(v)$ denotes an abstract It{\^o}-Stratonovich type correction term, satisfying Assumption \ref{Assumption:A7_correction_term}; we use an overline throughout this section to distinguish this abstract placeholder from the concrete correction term $\mathcal T$ of \eqref{eq:T_intro}, which we instantiate, in regularized form, in Section \ref{sec.reg_sigma} below.
Let us clarify the notion of solution for the equation \eqref{eqn:approximate_equation_corr}--\eqref{regularized_corr.bic}.
Let $T>0$, let $\tau$ be an $\F$-adapted stopping time,
and let $v$ be a continuous, $D(L)'$-valued, $\F$-adapted process. We call $(v,\tau)$
a local strong solution to \eqref{eqn:approximate_equation_corr}--\eqref{regularized_corr.bic} if
$$
  v(\omega,\cdot,\cdot)\in L^2([0,T\wedge\tau(\omega));D(L)')\cap
	C^0([0,T\wedge\tau(\omega));D(L)')
$$
for a.e.\ $\omega\in\Omega$ and for all $t\in[0,T\wedge\tau)$,
\begin{align}
   v(t) &= v(0) + \int_0^t\diverm\big(B(R_\eps(v(s)))\na R_\eps(v(s))\big)\dd s
	+ \sqrtcorrection\int_0^t\diverm\left(\Sigma\big(u(R_\eps(v(s))\big) \dW(s)\right) \label{4.defsol} \\
 &\phantom{xx}+\correction \lambda \int_0^t\overline{\mathcal{T}}(v) \ds ,\nonumber \\
	& \na R_\eps(v)\cdot\nu = 0\quad\mbox{on }\pa\dom\quad
	\Prob\mbox{-a.s.} \label{4.bc}
\end{align}
 $R_\eps$ can be shown to be strongly measurable and, if $v$ is 
progressively measurable, also progressively measurable. For progressively measurable $w$, $u(w)$ inherits this property, and if $v\in C^0([0,T];D(L)')$,
we have $R_\eps(v)\in C^0([0,T];D(L))$ and $u(R_\eps(v))\in L^\infty(Q_T)$. 
Finally, if $v\in L^0(\Omega;L^p(0,T;D(L)'))$ for $1\leq p\le\infty$, then
$\diver(B(u(R_\eps(v)))\na R_\eps(v))\in L^0(\Omega;L^p(0,T;D(L)')))$.
Therefore, the expressions in \eqref{4.defsol} are well defined.
We call a local strong solution {\em global strong solution} 
if $\Prob(\tau=\infty)=1$.
Given $t>0$ and $v\in L^2(\Omega;C^0([0,t];D(L)'))$, we introduce the 
stopping time
$$
  \tau_R := \inf\{s\in[0,t]:\|v(s)\|_{D(L)'}>R\}\quad\mbox{for }R>0,
$$ 
which is $\Prob$-a.s. positive. This claim was already verified in \cite{braukhoff2024global}.

Before we go into further specifics of the system, we want to state a general existence theorem for equations of the form \eqref{eqn:approximate_equation_corr}--\eqref{regularized_corr.bic}.
\begin{theorem}[Existence of approximate solutions]\label{thm:approx_solution_general}
Let Assumptions \ref{Assumption:A1_domain}--\ref{Assumption:A7_correction_term} be satisfied and 
let $\eps>0$, $R>0$.
Then problem \eqref{eqn:approximate_equation_corr}--\eqref{regularized_corr.bic} has a unique local solution 
$(v^\eps,\tau_R)$.
\end{theorem}
\begin{proof}
The proof is based on Theorem \cite[Theorem 13]{braukhoff2024global}, which in turn uses \cite[Theorem 4.2.4, Proposition 4.1.4]{LiRo15}. The necessary conditions for the operator
$M:D(L)'\to D(L)'$, $M(v) := \diver(B(R_\eps(v))\na R_\eps(v))$ are verified in the proof of \cite[Theorem 13]{braukhoff2024global}. These conditions concern only the drift operator $M$, which is built from $B$, $R_\eps$ and $L$ exactly as in \cite{braukhoff2024global} and is therefore unaffected by the present choice of noise: the different noise coefficient $\Sigma$ and correction term $\overline{\mathcal T}$ enter the fixed-point argument solely through their own Lipschitz-continuity and linear-growth bounds, which are supplied here by Assumptions~\ref{Assumption:A6_noise_coefficient}--\ref{Assumption:A7_correction_term} (verified for the concrete $\sigma_\delta$ and $\mathcal T$ in Lemma~\ref{lem:is_linear_growth_lipschitz}). Hence the verification of the conditions on $M$ carries over verbatim. The Lipschitz continuity and linear growth of $\Sigma$ guaranteed by Assumption \ref{Assumption:A6_noise_coefficient}, together with the Lipschitz continuity of $\sigma$ and Lemma \ref{lem.u}, now yield that for 
$v$, $\bar v\in D(L)'$ with  $\|v\|_{D(L)'}\leq K$ and $\|\bar v\|_{D(L)'}\leq K$, 
\begin{align*}
  \|&\diverm\left(\Sigma(u(R_\eps(v)))-\Sigma(u(R_\eps(\bar v)))\right)\|_{\mathcal{L}_2(U;D(L)')}
  \leq C\|\Sigma(u(R_\eps(v)))-\Sigma(u(R_\eps(\bar v)))
	\|_{\mathcal{L}_2(U;L^2(\dom))} \\
	&\leq C(K)\|u(R_\eps(v)))-u(R_\eps(\bar v))\|_{L^2(\dom)} \\
	&\leq C(K)\|R_\eps(v)-R_\eps(\bar v)\|_{D(L)}
	\leq C(\eps,K)\|v-\bar v\|_{D(L)'},
\end{align*}
where $C(K)$ also depends on the $L^\infty(\dom)$ norms of $u'(R_\eps(v))$
and $u'(R_\eps(\bar v))$. Assumption \ref{Assumption:A7_correction_term} guarantees that
\begin{align*}
  \|&\overline{\mathcal{T}}(v)-\overline{\mathcal{T}}(\bar v)\|_{\mathcal{L}_2(U;D(L)')}
	\leq C(\eps,K)\|v-\bar v\|_{D(L)'}.
\end{align*}

Hence, the assumptions of \cite[Theorem 4.2.4]{LiRo15}
are satisfied in the ball $\{v\in D(L)':\|v\|_{D(L)'}\leq K\}$. 
These {\em local} bounds are sufficient to
conclude the existence of a {\em local} solution $v$ up to the stopping time $\tau_R$. 
The boundary conditions follow from $R_\eps(v)\in D(L)=H^m_N(\dom)$ and the
definition of the space $H^m_N(\dom)$.
\end{proof}

\subsection{The regularized noise coefficient}\label{sec.reg_sigma}
Motivated by Remark~\ref{rem:particle_motivation}, we set
\begin{align*}
    \sigma(u(R_{\eps}(v^{\eps})))_{ij}=\delta_{ij}\sqrt{u_{i}\left(a_{i0}+\sum_{k=1}^{n}a_{ik}u^{\eps}_{k}\right)},
\end{align*} ($i,j=1,\dots,n$), with $\delta_{ij}$ being the Kronecker delta. For notational convenience, we write $u^{\eps}$ instead of $u(R_{\eps}(v^{\eps}))$.  To apply the results of the previous section, we regularize the components of $\sigma$ in the following way:
Set $\sigma_{\delta}(u^{\eps})_{ii}:= g_{\delta}\left(u^{\eps}_{i}\left(a_{i0}+\sum_{k=1}^{n}a_{ik}u^{\eps}_{k}\right)\right)$, where, for $\delta>0$,
\begin{align*}
    g_{\delta}(x):=\sqrt{x+\delta}-\sqrt\delta,\qquad x\geq0,
\end{align*}
and $g_\delta(x):=0$ for $x<0$ (this case does not occur along solutions, since $u^\eps>0$, but is included for definiteness). For notational convenience, let us define $\Ared_{i}(u^{\eps}):=a_{i0}+\sum_{k=1}^{n}a_{ik}u^{\eps}_{k}$. Unlike the piecewise regularizations used e.g.\ in \cite{perkowski_2024weak} for the Dean--Kawasaki equation, $g_\delta$ is a single smooth expression on $[0,\infty)$, with
\[
g_\delta'(x)=\frac1{2\sqrt{x+\delta}},\qquad x\geq0,
\]
so that the (partial) derivatives of $g_{\delta}(u^{\eps}_{i}\Ared_{i}(u^{\eps}))$, with respect to $u^{\eps}_{i}$ and $u^{\eps}_{j}$ ($j\neq i$), are given by
\begin{align*}
    \partial_{u^{\eps}_{i}}g_{\delta}(u^{\eps}_{i}\Ared_{i}(u^{\eps}))= \frac{\Ared_{i}(u^{\eps})+a_{ii}u^{\eps}_{i}}{2\sqrt{u^{\eps}_{i}\Ared_{i}(u^{\eps})+\delta}},\qquad
    \partial_{u^{\eps}_{j}}g_{\delta}(u^{\eps}_{i}\Ared_{i}(u^{\eps}))= \frac{a_{ij}u^{\eps}_{i}}{2\sqrt{u^{\eps}_{i}\Ared_{i}(u^{\eps})+\delta}}.
\end{align*}
It can be verified that $\left(\sigma_{\delta}(u^{\eps})\right)_{ii}$ is locally Lipschitz continuous in $u^{\eps}$, for every $i=1,\dots,n$, due to the Lipschitz continuity of $g_{\delta}$ and the local Lipschitz continuity of its argument. Further, $g_\delta\in C^\infty([0,\infty))$, and, for every $x>0$ and $\delta>0$,
\begin{align}\label{eqn:sqrt_approximation_properties}
\left\|\partial_{x}g_{\delta}\right\|_{L^{\infty}([0,\infty))} = \frac1{2\sqrt{\delta}}, \qquad \partial_{x} g_{\delta}(x) \leq \frac1{2\sqrt{x}},\qquad 0\leq g_{\delta}(x) \leq \sqrt{x},\qquad 0\leq \sqrt x-g_\delta(x)\leq\sqrt\delta,
\end{align}
the first three following directly from the formulas above (the last since $0\leq\sqrt{x+\delta}-\sqrt x\leq\sqrt\delta$, by subadditivity of $\sqrt\cdot$), the last of which also gives the uniform convergence $g_\delta\to\sqrt\cdot$ on $[0,\infty)$ as $\delta\to0$. In addition,
\begin{align}\label{eqn:sqrt_approximation_properties2}
g_\delta'(x)^2x=\frac{x}{4(x+\delta)}<\frac14,\qquad g_\delta'(x)g_\delta(x)=\frac12\Big(1-\frac{\sqrt\delta}{\sqrt{x+\delta}}\Big)<\frac12,\qquad\text{for every }x>0,\delta>0,
\end{align}
both approaching their respective bounds as $\delta\to0$ or $x\to\infty$, but never attaining them for $\delta>0$ fixed; this sharpens the corresponding (only asymptotic, $x\geq\delta$) identities available for a piecewise construction, and is used without further comment in the proof of Lemma~\ref{lem:is_correction_entropy_estimate} below.
Our goal now is to obtain a local solution $(v^{\eps},\tau)$ of
\begin{align}\label{eqn:approximate_equation_strat}
  & \dd v^{\eps} = \diverm\big(B(R_\eps(v^{\eps}))\na R_\eps(v^{\eps})\big)\dt
	+ \sqrtcorrection\diverm\left(\sigma_{\delta}\big(u(R_\eps(v^{\eps}))\big) \dW(t)\right) +\correction \lambda \mathcal{T}(v^{\eps}) \dt,\\
    &\quad\mbox{in }\dom,\ t\in[0,T\wedge\tau), \nonumber\\
	& v^{\eps}(0) = u^0\quad\mbox{in }\dom, \quad 
	\na R_\eps(v^{\eps})\cdot\nu = 0 \quad\mbox{on }
	\pa\dom,\ t>0, \label{regularized_stratonovich.bic}
\end{align}
In addition, we aim to obtain an entropy estimate, allowing us to extend the solution globally in time and to pass to the limit, as the regularization(s) vanish.  
 The components of the $n\times d$ matrix $W$ are given by $W^{ij}= \sum_{k=1}^{\infty}e^{ij}_{k} W^{ij}_{k}(s)$ ($i=1,\dots,n$ and $j=1,\dots,d$). The entries of the $n\times 1$ dimensional \em{modified} It{\^o}-Stratonovich correction are given by
\begin{align}\label{eqn:IS_correction}
    \mathcal{T}(v^{\eps})_{i}=\sum_{k=1}^{\infty}\sum_{l=1}^{d}\partial_{x_{l}}\left(\left(\partial_{u_{i}}\sigma\right)(u^{\eps})_{ii}\,e^{il}_{k}e^{il}_{k}\partial_{x_{l}}\left(\sigma(u^{\eps})\right)_{ii}+\left(\partial_{u_{i}}\sigma\right)(u^{\eps})_{ii}\,e^{il}_{k}\partial_{x_{l}}e^{il}_{k}\left(\sigma(u^{\eps})\right)_{ii}\right).
\end{align}
\begin{remark}
    Note that $\mathcal{T}$ differs from the natural It{\^o}-Stratonovich correction, given by
    \begin{align*}
    \mathcal{T}(v^{\eps})_{i}=\sum_{k=1}^{\infty}\sum_{l=1}^{d}\partial_{x_{l}}\left(\left(\partial_{v_{i}^{\eps}}\sigma\right)(u^{\eps})_{ii}\,e^{il}_{k}e^{il}_{k}\partial_{x_{l}}\left(\sigma(u^{\eps})\right)_{ii}+\left(\partial_{v_{i}^{\eps}}\sigma\right)(u^{\eps})_{ii}\,e^{il}_{k}\partial_{x_{l}}e^{il}_{k}\left(\sigma(u^{\eps})\right)_{ii}\right).
\end{align*}
This term includes functional derivatives in the direction of $v^{\eps}$. However, for our analysis, working with \eqref{eqn:IS_correction} is more convenient, as this term is already the appropriate correction for the limiting equation.
\end{remark}

Note that the correction term will not include any regularization.
We will not highlight the second regularization in $v^{\eps}$ or $u^{\eps}$, as $\delta$ can be chosen as a function of $\eps$, and both limits can be performed at the same time. In the following estimates, however, we want to keep $\delta$ as a separate parameter as this increases the readability of our arguments.
\begin{lemma}\label{lem:is_linear_growth_lipschitz}
    Let $v,v^{1},v^{2}\in D(L)^{\prime}$, such that $\|v\|_{D(L)^{\prime}},\|v^{1}\|_{D(L)^{\prime}}, \|v^{2}\|_{D(L)^{\prime}}\leq R$, for some $R>0$, then there exist a constant $C_{R,\eps}>0$, such that
    \begin{align*}
        &\|\mathcal{T}(v)\|_{D(L)^{\prime}}^{2}\leq C_{R,\eps}\left(1+\|v\|_{D(L)^{\prime}}^{2}\right)\\
        &\|\mathcal{T}(v^{1})-\mathcal{T}(v^{2})\|_{D(L)^{\prime}}^{2}\leq C_{R,\eps}\left\|v^{1}-v^{2}\right\|_{D(L)^{\prime}}^{2}.
    \end{align*}
\end{lemma}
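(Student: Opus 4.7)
The plan is to exploit the duality between $D(L)$ and $D(L)^{\prime}$: since $\mathcal{T}(v)_i$ is written as $\partial_{x_l}$ of an expression, I would move this derivative onto test functions by integration by parts and then bound the remaining integrand using the regularity of $R_\eps$, $u$, and $\sigma_\delta$.

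First, for arbitrary $\phi \in D(L)$, I would write
\begin{align*}
\langle \mathcal{T}(v)_i, \phi \rangle_{D(L)^{\prime},D(L)} = -\sum_{k=1}^\infty\sum_{l=1}^d \int_\dom \bigl[&(\partial_{u_i}\sigma_{ii})(e^{il}_k)^2 \partial_{x_l}\sigma_{ii} \\
&+ (\partial_{u_i}\sigma_{ii})\, e^{il}_k \,(\partial_{x_l} e^{il}_k)\, \sigma_{ii}\bigr]\, \partial_{x_l}\phi\, dx,
\end{align*}
and use the Sobolev embedding $D(L) = H^m_N(\dom) \hookrightarrow W^{1,\infty}(\dom)$ (valid since $m > d/2+1$) to estimate $\|\partial_{x_l}\phi\|_{L^\infty} \leq C\|\phi\|_{D(L)}$. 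Bounding the bracket in $L^1(\dom)$ then yields the $D(L)^{\prime}$ norm. For $\|v\|_{D(L)^{\prime}} \leq R$, Lemma \ref{lem.Reps} gives $\|R_\eps(v)\|_{D(L)} \leq C_{R,\eps}$, which together with the embedding into $W^{1,\infty}$ and the fact that $u(w)=\exp(w/\pi_i)$ shows that both $u^\eps$ and $\partial_{x_l} u^\eps = u^{\prime}(R_\eps(v))\,\partial_{x_l} R_\eps(v)$ are bounded in $L^\infty(\dom)$ by $C_{R,\eps}$. The regularized diffusion satisfies $|\sigma_\delta(u^\eps)_{ii}| \leq C\sqrt{u_i^\eps \widetilde A_i(u^\eps)}$ and $|\partial_{u_j}\sigma_\delta(u^\eps)_{ii}| \leq C/\sqrt{\delta}$ by \eqref{eqn:sqrt_approximation_properties}, so after applying the chain rule to expand $\partial_{x_l}\sigma_{ii}$ all pointwise factors are bounded in $L^\infty$.

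The sum over $k$ is controlled using Assumption \ref{Assumption:A5_noise_onb}: since $(e^{il}_k)_k$ is an orthonormal basis of $W^{s,2}(\dom)$ with $s > d+1 > d/2+1$, Sobolev embedding gives the summability $\sum_k (\|e^{il}_k\|_{L^\infty}^2 + \|\partial_{x_l} e^{il}_k\|_{L^\infty}^2) < \infty$. Putting everything together produces a bound of the form $\|\mathcal{T}(v)\|_{D(L)^{\prime}} \leq C_{R,\eps,\delta}$, from which the linear growth estimate follows (the $(1+\|v\|_{D(L)^{\prime}}^2)$ form is trivial once one has a uniform bound on bounded sets). For the Lipschitz estimate, I would write $\mathcal{T}(v^1)_i - \mathcal{T}(v^2)_i$ as a telescoping sum, replacing one factor at a time, and invoke the local Lipschitz continuity of $\sigma_\delta$ and $\partial_{u_i}\sigma_\delta$ (which holds since $g_\delta \in C^1$ with Lipschitz derivative on each piece, verifiable directly from the piecewise polynomial definition), combined with the Lipschitz continuity of $R_\eps: D(L)^{\prime} \to D(L)$ with constant $C/\eps$ from Lemma \ref{lem.Reps}.

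The main obstacle is the factor $\partial_{x_l}\sigma_{ii}$, which by the chain rule produces $\partial_{x_l} u^\eps$ and hence requires control of $R_\eps(v)$ in $W^{1,\infty}(\dom)$ rather than merely in $D(L)^{\prime}$. This is exactly the regularization effect built into $R_\eps$, and it is the reason the resulting constants depend on $\eps$ and blow up as $\eps \to 0$; the $\eps$-dependence is acceptable here since Theorem \ref{thm:approx_solution_gneral} is applied at fixed $\eps > 0$. A secondary care point is at the junctions $x=\delta/2$ and $x=\delta$ in the definition of $g_\delta$, where one must verify that $g_\delta^{\prime}$ is globally Lipschitz with constant depending only on $\delta$; this is immediate from the explicit piecewise formulas.
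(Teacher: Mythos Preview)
Your approach is essentially correct and follows the same high-level strategy as the paper: integrate by parts to move $\partial_{x_l}$ onto the test function via the embedding $D(L)\hookrightarrow W^{1,\infty}(\dom)$, then bound the remaining integrand in $L^1(\dom)$ using $\|R_\eps(v)\|_{D(L)}\le C_{R,\eps}$ from Lemma~\ref{lem.Reps}. One caveat: the paper explicitly stipulates that $\mathcal{T}$ is built from the \emph{unregularized} $\sigma$, not $\sigma_\delta$ (``the correction term will not include any regularization''), so your bound $|\partial_{u_j}\sigma_\delta|\le C/\sqrt{\delta}$ is not directly applicable. This is easily repaired by observing that on $\{\|v\|_{D(L)'}\le R\}$ one has $u_i^\eps=\exp(R_\eps(v)_i/\pi_i)\ge \exp(-\|R_\eps(v)\|_{L^\infty}/\pi_i)\ge c_{R,\eps}>0$, so $\partial_{u_i}\sigma_{ii}$ and its $u$-derivatives are bounded in $L^\infty(\dom)$ anyway.

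The paper executes the same plan slightly differently: instead of bounding the factors $\partial_{u_i}\sigma_{ii}$ and $\partial_{x_l}\sigma_{ii}$ separately in $L^\infty$, it multiplies them out first and uses the algebraic identity
\[
(\partial_{u_i}\sigma_{ii})(\partial_{x_l}\sigma_{ii})
=\tfrac{1}{4u_i^\eps\widetilde A_i}\bigl(\widetilde A_i+a_{ii}u_i^\eps\bigr)\bigl(\widetilde A_i\,\partial_{x_l}u_i^\eps+u_i^\eps\,\partial_{x_l}\widetilde A_i\bigr),
\]
then splits into four pieces, the most singular of which is $\widetilde A_i\,\partial_{x_l}\log u_i^\eps$. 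Since $\log u_i^\eps=R_\eps(v)_i/\pi_i\in D(L)$, this term is controlled directly by $\|R_\eps(v)\|_{D(L)}$ without ever invoking a pointwise lower bound on $u_i^\eps$. For the Lipschitz estimate the paper does the same telescoping you propose, again after expanding the product; the trickiest term is $\tfrac{u_i^{1,\eps}}{\widetilde A_i(u^{1,\eps})}-\tfrac{u_i^{2,\eps}}{\widetilde A_i(u^{2,\eps})}$, handled via $\widetilde A_i\ge a_{i0}>0$. Your route is shorter; the paper's explicit decomposition has the side benefit that the same terms reappear verbatim in the entropy computation of Lemma~\ref{lem:is_correction_entropy_estimate}.
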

\begin{proof}
    The proof is postponed to Appendix (\ref{proof:is_linear_growth_lipschitz}).
\end{proof}
\begin{proposition}\label{prop:approx_solution_specific_case}
    Let Assumptions \ref{Assumption:A1_domain}--\ref{Assumption:A4_correction_factor} hold. For every $\eps>0$ and $\delta>0$, there exists a unique local solution $(v^{\eps},\tau_{R})$ to problem \eqref{eqn:approximate_equation_strat}--\eqref{regularized_stratonovich.bic}.
\end{proposition}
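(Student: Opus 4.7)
The plan is to reduce this statement directly to the general existence result, Theorem \ref{thm:approx_solution_gneral}, by verifying that the specific diffusion coefficient $\sigma_\delta$ and correction term $\mathcal{T}$ of \eqref{eqn:approximate_equation_strat}--\eqref{regularized_stratonovich.bic} satisfy the abstract hypotheses \ref{Assumption:A6_noise_coefficient} and \ref{Assumption:A7_correction_term} on the ball $\{v\in D(L)':\|v\|_{D(L)'}\le R\}$. Assumptions \ref{Assumption:A1_domain}--\ref{Assumption:A4_correction_factor} are in force by hypothesis; Assumption \ref{Assumption:A5_noise_onb} is built into the definition of the colored noise $W$. The correction term already fulfills \ref{Assumption:A7_correction_term}: this is precisely the content of Lemma \ref{lem:is_linear_growth_lipschitz}, which will be invoked as a black box.

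The main work is therefore to check \ref{Assumption:A6_noise_coefficient} for $\Sigma(u)=\sigma_\delta(u^\eps)$. I would proceed via a chain-of-Lipschitz argument, combining four ingredients already established in the paper and in the present section. First, by Lemma \ref{lem.Reps}, $R_\eps:D(L)'\to D(L)$ is Lipschitz with constant $C/\eps$. Second, since $m>d/2+1$, Sobolev embedding gives $D(L)=H^m_N(\dom)\hookrightarrow W^{1,\infty}(\dom)$, so that $u^\eps=u(R_\eps(v))=\exp(R_\eps(v)/\pi)$ and its spatial gradient are uniformly bounded in $L^\infty(\dom)$ on the ball $\|v\|_{D(L)'}\le R$, by a constant $C(\eps,R)$. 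Third, the regularized square root $g_\delta$ belongs to $C^1$ and satisfies the bounds \eqref{eqn:sqrt_approximation_properties}; in particular both $g_\delta$ and $g_\delta'$ are globally Lipschitz with constants depending only on $\delta$. Fourth, each entry $\sigma_\delta(u^\eps)_{ii}=g_\delta(u_i^\eps\Ared_i(u^\eps))$ is a composition of $g_\delta$ with a polynomial in $u^\eps$, whose $x_l$-derivative is $g_\delta'(u_i^\eps\Ared_i(u^\eps))\bigl[(\Ared_i(u^\eps)+a_{ii}u_i^\eps)\pa_{x_l}u_i^\eps+\sum_{j\neq i}a_{ij}u_i^\eps\pa_{x_l}u_j^\eps\bigr]$.

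Expanding $\diverm\sigma_\delta(u^\eps)$ against the orthonormal family $(e^{ij}_k)_{ijk}$ of Assumption \ref{Assumption:A5_noise_onb}, the Hilbert--Schmidt norm
\[
\|\diverm(\sigma_\delta(u^\eps)-\sigma_\delta(\bar u^\eps))\|_{\L_2(U;L^2(\dom))}^2
=\sum_{i,l,k}\bigl\|\pa_{x_l}\bigl((\sigma_\delta(u^\eps)_{ii}-\sigma_\delta(\bar u^\eps)_{ii})e^{il}_k\bigr)\bigr\|_{L^2(\dom)}^2
\]
is controlled, after using the product rule and $\sum_k\|e^{il}_k\|_{L^\infty}^2<\infty$, $\sum_k\|\pa_{x_l}e^{il}_k\|_{L^\infty}^2<\infty$ (a consequence of \ref{Assumption:A5_noise_onb} and $s>d+1$), by
\[
C\bigl(\|\sigma_\delta(u^\eps)-\sigma_\delta(\bar u^\eps)\|_{L^2}^2+\|\na(\sigma_\delta(u^\eps)-\sigma_\delta(\bar u^\eps))\|_{L^2}^2\bigr).
\]
Using local Lipschitzness of $g_\delta$, $g_\delta'$, the uniform $L^\infty$ bounds on $u^\eps,\na u^\eps$, and finally the Lipschitzness of $R_\eps$, the right-hand side is bounded by $C(\eps,\delta,R)\|v-\bar v\|_{D(L)'}^2$, which is \ref{Assumption:A6_noise_coefficient}. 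The linear-growth bound follows from the same computation with one of the arguments replaced by zero and the a priori bound $|g_\delta(x)|\le C\sqrt{|x|}$.

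With both \ref{Assumption:A6_noise_coefficient} and \ref{Assumption:A7_correction_term} verified on the ball $\{\|v\|_{D(L)'}\le R\}$, the hypotheses of Theorem \ref{thm:approx_solution_gneral} hold, and it produces a unique local solution $(v^\eps,\tau_R)$ to \eqref{eqn:approximate_equation_strat}--\eqref{regularized_stratonovich.bic}. The main obstacle is purely bookkeeping: carefully tracking how the various Lipschitz constants depend on $\eps$, $\delta$, and $R$, and in particular ensuring that the $\delta$-regularization of the square root is what rescues the Hilbert--Schmidt bound near $\{u_i^\eps\Ared_i(u^\eps)=0\}$, where the unregularized $\sqrt{\cdot}$ would fail to be Lipschitz. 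Since uniformity in $\eps$ and $\delta$ is \emph{not} required here --- that issue is postponed to the entropy estimate of the next section, which is where $\lambda>\tfrac12$ and \ref{Assumption:A4_correction_factor} will actually be used --- the present verification is essentially mechanical.
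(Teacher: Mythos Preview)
Your proposal is correct and follows essentially the same route as the paper: verify that $\sigma_\delta$ satisfies \ref{Assumption:A6_noise_coefficient} (locally on the ball of radius $R$) and that $\mathcal{T}$ satisfies \ref{Assumption:A7_correction_term} via Lemma \ref{lem:is_linear_growth_lipschitz}, then invoke Theorem \ref{thm:approx_solution_gneral}. The paper's proof is a one-liner that cites \cite{braukhoff2024global} and the $g_\delta$-regularization for the noise coefficient part, whereas you spell out the chain-of-Lipschitz argument explicitly; the content is the same.
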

\begin{proof}
    By \cite{braukhoff2024global}, the regularization of $\sigma$ and Lemma \ref{lem:is_linear_growth_lipschitz}, Assumptions \ref{Assumption:A6_noise_coefficient}, \ref{Assumption:A7_correction_term} are satisfied and Theorem \ref{thm:approx_solution_general} (with $\Sigma=\sigma_{\delta}$) yields the existence and uniqueness of a local solution.
\end{proof}

\section{Uniform estimates}\label{sec.uniform}

To pass to the limit in the regularization, we require suitable estimates. Hence, the goal of the following pages will be to establish an entropy estimate for the solution of \eqref{eqn:approximate_equation_strat}--\eqref{regularized_stratonovich.bic}, which is uniform in the regularization parameters.
\begin{lemma}\label{lem:is_correction_entropy_estimate}
Let $(v^\eps,\tau_R)$ be a local solution to \eqref{eqn:approximate_equation_strat}--\eqref{regularized_stratonovich.bic} and set
$v^R(t)=v^\eps(\omega,t\wedge\tau_R(\omega))$ for $\omega\in\Omega$, 
$t\in(0,\tau_R(\omega))$. We define
\begin{align*}
    \operatorname{IC}_{i,k,l}:=\pi_{i}\left(g_{\delta}(u^{\eps}_{i}\Ared_{i}(u^{\eps}))^{\prime}\left(\Ared_{i}(u^{\eps})\partial_{x_{l}}u^{\eps}_{i}+u^{\eps}_{i}\partial_{x_{l}}\Ared_{i}(u^{\eps})\right)e^{il}_{k}+g_{\delta}(u^{\eps}_{i}\Ared_{i}(u^{\eps}))\partial_{x_{l}}e^{il}_{k}\right)^{2}\frac{1}{u^{\eps}_{i}}.
\end{align*}
 Let $0< \kappa,\widetilde{\kappa}_{3} \leq \frac{1}{2}$. Then 
    \begin{align*}
     \lambda&\int_{0}^{t}\langle \mathcal{T}(u^{\eps}),R_{\eps}\rangle_{L^{2}(\dom)} \ds+\frac{1}{2}\int_{0}^{t}\int_{\dom}\sum_{i=1}^{n}\sum_{k=1}^{\infty}\sum_{l=1}^{d}\operatorname{IC}_{i,k,l}\dx\ds\\
        &\leq -\left(\frac\lambda4-\frac18-\frac{\lambda\kappa}2-\frac{3\kappa}8\right)\int_{0}^{t}\int_{\dom}\sum_{i=1}^{n}\sum_{k=1}^{\infty}\sum_{l=1}^{d}\pi_{i}\Ared_{i}(u^{\eps})\left(\frac{\partial_{x_{l}}u^{\eps}_{i}}{u^{\eps}_{i}}\right)^{2}\left(e^{il}_{k}\right)^{2}\dx \ds \\
&\phantom{xx}-\lambda\int_{0}^{t}\int_{\dom}\sum_{k=1}^{\infty}\sum_{i=1}^{n}\sum_{l=1}^{d}\pi_{i}a_{ii}\left(\partial_{x_{l}}\sqrt{u^{\eps}_{i}}\right)^{2}\left(e^{il}_{k}\right)^{2}\dx \ds \\
    &\phantom{xx}+\left(\frac{2\lambda+1}{8\kappa}+\frac18\right)\sum_{k=1}^{\infty}\sum_{i=1}^{n}\sum_{l=1}^{d}\pi_i\int_{0}^{t}\int_{\dom}\frac{\left(\partial_{x_{l}}\Ared_{i}(u^{\eps})\right)^{2}}{\Ared_i(u^\eps)}\left(e^{il}_{k}\right)^{2}\dx\ds\\
&\phantom{xx}+\frac{\widetilde\kappa_3}{4}\sum_{k=1}^{\infty}\sum_{i=1}^{n}\sum_{l=1}^{d}\pi_i\int_0^t\int_\dom\left(\partial_{x_l}\Ared_i(u^\eps)\right)^2\left(e_k^{il}\right)^2\dx\ds\\
    &\phantom{xx}+\frac{\lambda\kappa}{4}\sum_{k=1}^{\infty}\sum_{i=1}^{n}\sum_{l=1}^{d}\pi_{i}a_{ii} \int_{0}^{t}\int_{\dom}
 \left(\partial_{x_{l}}u^{\eps}_{i}\right)^{2}
 \left(e^{il}_{k}\right)^2\dx\ds\\
    &\phantom{xx}+  \sum_{k=1}^{\infty}\sum_{i=1}^{n}\sum_{l=1}^{d}\pi_{i}\left(\frac{\lambda a_{ii}}{4\kappa} +\frac{1}{4\widetilde{\kappa}_{3}}\right)\int_{0}^{t}\left\|\partial_{x_{l}}e^{il}_{k}\right\|^{2}_{L^{2}} \ds\\
&\phantom{xx}+C_a\left(\frac{\lambda+1}{4\kappa}+\frac12\right)\sum_{k=1}^{\infty}\sum_{i=1}^{n}\sum_{l=1}^{d}\left\|\partial_{x_{l}}e^{il}_{k}\right\|^{2}_{L^{\infty}}\pi_{i}\int_{0}^{t}\int_{\dom}u^{\eps}_{i}\log(u^{\eps}_{i})-u^{\eps}_{i}+2\dx \ds,
\end{align*}
where $C_a=C_a(a_{i0},\ldots,a_{in})>0$ is an explicit constant coming from the elementary bound $\Ared_i(u)\leq C_a(1+\sum_{j=1}^nu_j\log u_j-u_j)$, $u\in(0,\infty)^n$ (see the proof).
\end{lemma}
The proof is in the Appendix (see \ref{proof:is_correction_entropy_estimate}).

\begin{proposition}[Entropy inequality]\label{prop.ent}
Let $(v^\eps,\tau_R)$ be a local solution to \eqref{eqn:approximate_equation_strat}--\eqref{regularized_stratonovich.bic} and set
$v^R(t)=v^\eps(\omega,t\wedge\tau_R(\omega))$ for $\omega\in\Omega$, 
$t\in(0,\tau_R(\omega))$. Let $0<\kappa,\widetilde\kappa_3\leq \frac{1}{2}$, $0<\tau<1$ and $\lambda>\frac{1}{2}$ be fixed, and set $c_\tau:=\frac1{1-\tau}$.
Then there exists a constant $C(u^0,T)>0$, depending on $u^0,\lambda,\kappa,\widetilde\kappa_3,\tau$ and $T$ but 
not on $\eps, \delta$ and $R$, such that
\begin{align*}
  \E&\sup_{0<t<T\wedge\tau_R}\int_\dom h(u^\eps(t))\dx 
	+ c_{\tau}\frac{\eps}{2}\E\sup_{0<t<T\wedge\tau_R}\|Lw^\eps(t)\|_{L^2(\dom)}^2 \\
	&\phantom{xx}+ \E\sup_{0<t<T\wedge\tau_R} \int_0^t\int_\dom \sum_{i=1}^n\pi_i\big(4a_{i0}|\na(u^\eps)^{1/2}|^2 + 2a_{ii}|\na u^\eps|^2\big)
	+ 2\sum_{i\neq j}\pi_i a_{ij}|\na (u^\eps_i u^\eps_j)^{1/2}|^2\dx\ds \\
 &\phantom{xx}+c_\tau\correction C_{\lambda,\kappa}\E\sup_{0<t<T\wedge\tau_R}\int_{0}^{t}\int_{\dom}\sum_{i=1}^{n}\sum_{k=1}^{\infty}\sum_{l=1}^{d}\pi_{i}\Ared_{i}(u^{\eps})\left(\frac{\partial_{x_{l}}u^{\eps}_{i}}{u^{\eps}_{i}}\right)^{2}\left(e^{il}_{k}\right)^{2}\dx \ds\\
 &\phantom{xx}+c_\tau\correction\lambda\E\sup_{0<t<T\wedge\tau_R}\int_{0}^{t}\int_{\dom}\sum_{k=1}^{\infty}\sum_{i=1}^{n}\sum_{l=1}^{d}\pi_{i}a_{ii}\left(\partial_{x_{l}}\sqrt{u^{\eps}_{i}}\right)^{2}\left(e^{il}_{k}\right)^{2}\dx \ds \\
 &\leq C\left(u^0,T,\kappa^{-1},\lambda,\tau\right),
\end{align*}
where $C_{\lambda,\kappa}:=\frac\lambda4-\frac18-\frac{\lambda\kappa}2-\frac{3\kappa}8$, and where $u^\eps:=u(R_\eps(v^R))$ and $w^\eps:=R_\eps(v^R)$.
In particular, we have that for all $\eps>0$ and
$i,j=1,\ldots,n$ with $i\neq j$,
\begin{align}
  \E\|u_i^\eps\|_{L^\infty(0,T;L^1(\dom))} &\leq C(u^0,T), \label{5.L1} \\
	a_{i0}^{1/2}\E\|(u_i^\eps)^{1/2}\|_{L^2(0,T;H^1(\dom))}
	+ a_{ii}^{1/2}\E\|u_i^\eps\|_{L^2(0,T;H^1(\dom))} &\leq C(u^0,T), \label{5.H1} \\
	a_{ij}^{1/2}\E\|\na(u_i^\eps u_j^\eps)^{1/2}\|_{L^2(0,T;L^2(\dom))} &\leq C(u^0,T).
	\nonumber 
\end{align}
Moreover, we have the estimate
\begin{equation}\label{5.v}
  \eps\E\|LR_\eps(v^\eps)\|_{L^\infty(0,T;L^2(\dom))}^2
	+ \E\|v^\eps\|_{L^\infty(0,T;D(L)')}^2 \leq C(u^0,T).
\end{equation}
\end{proposition}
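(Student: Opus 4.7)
The plan is to apply the Itô formula to the regularized entropy functional
\[
  \mathcal{E}_\eps(v) \;:=\; \int_\dom h(u(R_\eps(v)))\,\mathrm{d}x \;+\; \frac{\eps}{2}\|L R_\eps(v)\|_{L^2(\dom)}^2,
\]
evaluated along the local solution $v^R(t)=v^\eps(\cdot,t\wedge\tau_R)$ of the approximate system \eqref{eqn:approximate_equation_strat}--\eqref{regularized_stratonovich.bic}. By the construction of $R_\eps$ and Lemma \ref{lem.Reps}, the Fréchet derivative of $\mathcal{E}_\eps$ evaluates to $R_\eps(v)=w^\eps$ (essentially the entropy variable), so the drift coming from the $B\nabla R_\eps(v)$ term produces exactly the entropy dissipation identity \eqref{1.ei}, i.e. the non-negative quadratic form
\[
  \int_\dom \nabla w^\eps \!:\! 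B(w^\eps)\nabla w^\eps \,\mathrm{d}x
  \;=\; 2\sum_{i=1}^n\pi_i\!\int_\dom\!\big(2a_{i0}|\nabla\sqrt{u_i^\eps}|^2 + 2a_{ii}|\nabla u_i^\eps|^2 + \sum_{j\ne i}a_{ij}|\nabla\sqrt{u_i^\eps u_j^\eps}|^2\big)\,\mathrm{d}x.
\]
This step uses precisely the detailed-balance condition \ref{Assumption:A3_SKT_matrix} and follows the deterministic computation from \cite{CDJ18}.

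Next, I would group the stochastic contributions. The martingale part $\int_0^t \langle R_\eps(v^\eps),\tfrac{1}{\sqrt{N}}\diverm(\sigma_\delta \,\mathrm{d}W)\rangle$ is an $\F$-martingale that will be controlled via Burkholder--Davis--Gundy after taking $\sup_{t<T\wedge\tau_R}$ and $\E$. The remaining deterministic-in-time contributions split into two pieces: (i) the quadratic-variation/Itô correction arising from $D^2 \mathcal{E}_\eps$ applied to the noise, which produces the $\operatorname{IC}_{i,k,l}$ terms written out in Lemma \ref{lem:is_correction_entropy_estimate}, and (ii) the drift $\tfrac{\lambda}{N}\mathcal{T}(v^\eps)$ paired with $R_\eps(v^\eps)$. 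These are exactly the two quantities bundled on the left-hand side of Lemma \ref{lem:is_correction_entropy_estimate}, so I apply that lemma directly to produce an upper bound consisting of (a) negative multiples of the noise-dissipation quantities $\pi_i \widetilde A_i(u^\eps)(\partial_{x_l}u_i^\eps/u_i^\eps)^2(e^{il}_k)^2$ and $\pi_i a_{ii}(\partial_{x_l}\sqrt{u_i^\eps})^2(e^{il}_k)^2$ (which appear with the correct signs in the statement of Proposition \ref{prop.ent}), and (b) bad terms of the shape $\|e^{il}_k\|_{L^\infty}^2 \int_0^t \int_\dom (\partial_{x_l} u_i^\eps)^2$, $\|e^{il}_k\|_{L^\infty}^2 \int_0^t\int_\dom u_i^\eps\log u_i^\eps$, and the interpolation term involving $(\partial_{x_l}u_i^\eps)^{1-\theta}(\sum_j \partial_{x_l}\sqrt{u_j^\eps})^\theta$, all carrying the prefactor $\tfrac{1}{N}$.

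The main obstacle, and the whole reason for Assumption \ref{Assumption:A4_correction_factor}, is that these bad terms must be absorbed into the good entropy dissipation. Concretely, I would estimate the interpolation integrand by Young's inequality (with exponents $1/(1-\theta)$ and $1/\theta$) so that it is dominated by a combination of $\int(\partial_{x_l}u_i^\eps)^2$ and $\int |\nabla\sqrt{u_j^\eps}|^2$, both of which appear on the good side with strictly positive coefficients $2\pi_i a_{ii}$ and $4\pi_i a_{0i}$ respectively. The $u_i^\eps\log u_i^\eps$ bad term is controlled by the entropy itself through $h(u)\geq u\log u - u + 1$, and therefore enters a Grönwall-type bookkeeping on the left-hand side. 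The three quantitative inequalities in \ref{Assumption:A4_correction_factor} are tuned exactly so that, after this absorption, the net coefficients of $|\nabla\sqrt{u_i^\eps}|^2$ and $|\nabla u_i^\eps|^2$ remain strictly positive (yielding a fraction $c>0$ of the original dissipation), while the $u_i^\eps\log u_i^\eps$ contribution admits a constant $C(u^0,T)$ via Grönwall.

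Finally, I take $\sup_{0<t<T\wedge\tau_R}$, then $\E$, and handle the martingale by BDG:
\[
  \E\sup_t \Big|\int_0^t \langle R_\eps(v^\eps), \tfrac{1}{\sqrt N}\diverm(\sigma_\delta\,\mathrm{d}W)\rangle\Big| \;\leq\; C\E\Big(\int_0^{T\wedge\tau_R}\|\sigma_\delta(u^\eps)\nabla w^\eps\|_{\L_2}^2\,\mathrm{d}t\Big)^{1/2},
\]
and estimate the integrand by $\|e^{il}_k\|_{L^\infty}^2 \int u_i^\eps \widetilde A_i(u^\eps) |\nabla w^\eps|^2$, which is of the same type as the good noise-dissipation term and thus absorbable (with an extra $1/2$-split of the supremum via Young). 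Collecting everything produces the displayed entropy inequality of Proposition \ref{prop.ent}. The three corollary estimates \eqref{5.L1}, \eqref{5.H1}, \eqref{5.v} then follow: \eqref{5.L1} from $h(u)\gtrsim u_i-1$ and mass inequality; \eqref{5.H1} from the positivity of $a_{i0}, a_{ii}$ and the gradient terms in the dissipation; and \eqref{5.v} by combining the $\eps$-term with the relation $v^\eps = u(w^\eps)+\eps L^*L w^\eps$ and \eqref{3.LL}, bounding $\|v^\eps\|_{D(L)'} \lesssim \|u(w^\eps)\|_{L^1} + \eps^{1/2}(\eps^{1/2}\|Lw^\eps\|_{L^2})$.
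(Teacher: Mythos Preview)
Your proposal is correct and follows essentially the same approach as the paper: apply It\^o's lemma to the regularized entropy $\mathcal{E}_\eps$, identify $D\mathcal{E}_\eps[v]=R_\eps(v)$, use detailed balance for the drift dissipation, invoke Lemma~\ref{lem:is_correction_entropy_estimate} for the correction plus It\^o-variation terms, absorb via Assumption~\ref{Assumption:A4_correction_factor}, then BDG and Gr\"onwall. The only minor procedural difference is that the paper first takes expectation (without $\sup$) and applies Gr\"onwall to obtain a preliminary bound, and then repeats the argument with $\sup_{t}$ and BDG; it also specializes Lemma~\ref{lem:is_correction_entropy_estimate} to $\theta=1$ rather than interpolating with Young, but this does not change the substance of the argument.
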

\begin{corollary}\label{cor:nabla_log_u_estimate}
    For every $\eps>0$, $i,j=1,\dots,n$, the estimate
    \begin{align}
        \E \left(\|\nabla w^{\eps}\|_{L^{2}(0,T;L^{2}(\dom))}+\|\sqrt{\pi_{i}a_{ij}u^{\eps}_{j}}\nabla w^{\eps}_{i}\|_{L^{2}(0,T;L^{2}(\dom))}\right)\leq C(u^{0},T)
    \end{align}
    holds.
\end{corollary}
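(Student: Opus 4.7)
\textbf{Proof plan for Corollary \ref{cor:nabla_log_u_estimate}.}
The plan is to directly extract the two desired bounds from the stochastic dissipation term that already appears on the left-hand side of Proposition \ref{prop.ent}. Recalling that the entropy density \eqref{1.h} gives $w_i = \pi_i\log u_i$, so that $\nabla w_i^\eps = \pi_i\,\nabla u_i^\eps/u_i^\eps$, the key observation is that the fourth term in the entropy inequality reads
\[
\tfrac{1}{N}\,C(\lambda,\kappa)\,\E\sup_{0<t<T\wedge\tau_R}\int_0^t\!\!\int_\dom \sum_{i,k,l}\pi_i\,\widetilde A_i(u^\eps)\left(\frac{\partial_{x_l}u_i^\eps}{u_i^\eps}\right)^{2}(e^{il}_k)^{2}\dx\,\ds \;\leq\; C(u^0,T,\kappa^{-1},\lambda),
\]
and that $\widetilde A_i(u^\eps)=a_{i0}+\sum_k a_{ik}u_k^\eps$ admits two convenient lower bounds: $\widetilde A_i(u^\eps)\geq a_{i0}>0$ (by hypothesis), and $\widetilde A_i(u^\eps)\geq a_{ij}u_j^\eps$ for each $j$.

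First I would handle $\nabla w^\eps$. Substituting $\widetilde A_i(u^\eps)\geq a_{i0}$ into the bound above and using the identity $\sum_l(\partial_{x_l}u_i^\eps/u_i^\eps)^2=|\nabla w_i^\eps|^2/\pi_i^2$ leaves us with a weighted integral of $|\nabla w_i^\eps|^2$ with weight $\sum_k(e^{il}_k(x))^2$. Since Assumption \ref{Assumption:A5_noise_onb} makes $(e^{il}_k)_k$ an orthonormal basis of a Hilbert space continuously embedded into $C(\overline\dom)$ (because $s>d+1>d/2$), the diagonal reproducing kernel $x\mapsto \sum_k (e^{il}_k(x))^2$ is continuous and strictly positive on the compact set $\overline\dom$, so bounded below by some $c_0>0$. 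Dividing by $c_0 \pi_i a_{i0}N^{-1}C(\lambda,\kappa)$ then yields $\E\|\nabla w^\eps\|_{L^2(0,T\wedge\tau_R;L^2(\dom))}^2\leq C(u^0,T)$.

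For the second quantity I would repeat the argument using instead the pointwise lower bound $\widetilde A_i(u^\eps)\geq a_{ij}u_j^\eps$. This gives
\[
\pi_i a_{ij} u_j^\eps \left(\frac{\partial_{x_l} u_i^\eps}{u_i^\eps}\right)^{\!2}(e^{il}_k)^2 \;\leq\; \pi_i\,\widetilde A_i(u^\eps)\left(\frac{\partial_{x_l} u_i^\eps}{u_i^\eps}\right)^{\!2}(e^{il}_k)^{2},
\]
and summing over $l,k$ combined with the same lower bound on $\sum_k (e_k^{il})^2$ produces
$\E\|\sqrt{\pi_i a_{ij}u_j^\eps}\,\nabla w_i^\eps\|_{L^2(0,T\wedge\tau_R;L^2(\dom))}^2\leq C(u^0,T)$, since $|\sqrt{\pi_i a_{ij}u_j^\eps}\,\nabla w_i^\eps|^2 = \pi_i^3 a_{ij}u_j^\eps\sum_l(\partial_{x_l}u_i^\eps/u_i^\eps)^2$.

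To conclude, I would pass to the limit $R\to\infty$: by the a.s.\ positivity of $\tau_R$ and the global-in-time extension used in the proof of Proposition \ref{prop.ent} (which is already guaranteed by the entropy inequality being uniform in $R$), monotone convergence upgrades the $[0,T\wedge\tau_R]$-bounds to the $[0,T]$-bounds. A final application of Jensen's inequality turns the $\E(\cdot)^2\leq C$ estimates into the $\E(\cdot)\leq \sqrt{C}$ form stated in the corollary. The only nontrivial step is the lower bound $\sum_k(e_k^{il}(x))^2\geq c_0>0$ on $\overline\dom$, which I expect to be the main (though mild) obstacle; it is handled by the Sobolev embedding provided by Assumption \ref{Assumption:A5_noise_onb} together with Parseval, rather than by any genuinely new analysis.
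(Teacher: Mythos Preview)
Your approach is correct and matches the paper's own argument: the paper's proof is just the two sentences ``The result is immediate from Proposition \ref{prop.ent}. The choice of $\left(e_{k}^{il}\right)_{i,l,k}$ and the definition of $\Ared$ yield the bound,'' and you have simply filled in the details --- extracting the two lower bounds $\Ared_i(u^\eps)\ge a_{i0}$ and $\Ared_i(u^\eps)\ge a_{ij}u_j^\eps$, and using the reproducing-kernel positivity $\inf_{x\in\overline\dom}\sum_k(e^{il}_k(x))^2>0$ coming from Assumption \ref{Assumption:A5_noise_onb}. The only point worth flagging is that the strict positivity of the diagonal kernel relies on the $(e^{il}_k)_k$ spanning $W^{s,2}(\dom)$ for each fixed $(i,l)$, which is the natural reading of \ref{Assumption:A5_noise_onb} but is stated somewhat loosely in the paper.
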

\begin{proof}
    The result is immediate from Proposition \ref{prop.ent}. The choice of $\left(e_{k}^{il}\right)_{i,l,k}$ and the definition of $\Ared$ yield the bound.
\end{proof}
\begin{proof}[Proof of Proposition \ref{prop.ent}]
We apply the It\^o lemma in the version of 
\cite[Theorem 3.1]{Kry13}, with $V=H=D(L)'$
and the regularized entropy
\begin{equation}\label{3.ent}
  {\mathcal H}(v) := \int_\dom h(u(R_\eps(v)))\dx  
	+ \frac{\eps}{2}\|LR_\eps(v)\|_{L^2(\dom)}^2, \quad v\in D(L)'.
\end{equation}
Recall that $R_\eps(v) = h'(u(R_\eps(v)))$ for $v\in D(L)'$, since
$u=u(w)$ is the inverse of $h'$. The necessary conditions can be verified similarly to in \cite{braukhoff2024global}. 
For convenience, we will outline the steps. ${\mathcal H}$ is Fr\'echet differentiable and using Lemma \ref{lem.Reps}, its derivative can be expressed as
\begin{align*}
  \DD {\mathcal H}[v](\xi) &= \int_\dom\big(h'(u(R_\eps(v)))u'(R_\eps(v))
	\DD R_\eps[v](\xi)
	+ \eps L\DD R_\eps[v](\xi)\cdot LR_\eps(v)\big)\dx  \\
	&= \big\langle (u'(R_\eps(v))+\eps L^*L)\DD R_\eps[v](\xi),R_\eps(v) 
	\big\rangle_{D(L)',D(L)} \\
	&= \big\langle \DD Q_\eps[R_\eps(v)]\DD R_\eps[v](\xi),R_\eps(v)
	\big\rangle_{D(L)',D(L)}
	= \langle \xi,R_\eps(v)\rangle_{D(L)',D(L)},
\end{align*}
where $v$, $\xi\in D(L)'$.
In other words, $\DD {\mathcal H}[v]$ can be identified with 
$R_\eps(v)\in D(L)$.
In a similar way, we can prove that $\DD {\mathcal H}[v]$ 
is Fr\'echet differentiable and
$$
  \DD^2 {\mathcal H}[v](\xi,\eta) = \langle \xi,\DD R_\eps[v](\eta)\rangle_{D(L)',D(L)}
	\quad\mbox{for } v,\,\xi,\,\eta\in D(L)'.
$$
Thanks to the Lipschitz continuity of $R_\eps$
and $\DD R_\eps[v]$ (see Lemma \ref{lem.Reps})
for all $v$, $\xi\in D(L)'$ with $\|v\|_{D(L)'}\leq K$ for some $K>0$, we have
\begin{align*}
  |\DD {\mathcal H}[v](\xi)| &\leq \|R_\eps(v)\|_{D(L)}\|\xi\|_{D(L)'}
	\leq C(\eps)(1+\|v\|_{D(L)'})\|\xi\|_{D(L)'} \leq C(\eps,K)\|\xi\|_{D(L)'}, \\
  |\DD^2 {\mathcal H}[v](\xi,\xi)| &\leq \|\DD R_\eps[v](\xi)\|_{D(L)}\|\xi\|_{D(L)'}
	\leq C(\eps)\|\xi\|_{D(L)'}^2.
\end{align*}

To bound the mapping $D(L)'\to\R$, 
$v\mapsto \DD {\mathcal H}[v](\eta)$ for any $\eta\in D(L)'$, 
as in Lemma \ref{lem.L1}, we use that the operator $L$ can be 
constructed in such a way that the Riesz representative in $D(L)'$ of a functional
acting on $D(L)'$ can be expressed via the application of $L^*L$ to an element of
$D(L)$. Indeed, for $F\in D(L)$ and $\xi\in D(L)'$, Lemma \ref{lem.L1} yields that
\begin{align*}
  \langle\xi,F\rangle_{D(L)',D(L)} &= (L^{-1}\xi,LF\rangle_{D(L)',D(L)}
	= ((LL^{-1})L^{-1}\xi,LF)_{L^2(\dom)} \\
	&= (L^{-1}\xi,L^{-1}L^*LF)_{L^2(\dom)} = (L^*LF,\xi)_{D(L)'}.
\end{align*}
Hence, we can associate $\DD {\mathcal H}[v]$ with $L^*LR_\eps(v) \in D(L)'$, since we can identify $\DD {\mathcal H}[v]$ with $R_\eps(v)\in D(L)$.
Then, by the first estimate in \eqref{3.LL} and the Lipschitz continuity of 
$R_\eps$,
\begin{align*}
  \|L^*LR_\eps(v)\|_{D(L)'} 
	&\leq C\|R_\eps(v)\|_{D(L)} 
	\leq C\|R_\eps(v)-R_\eps(0)\|_{D(L)} + C\|R_\eps(0)\|_{D(L)} \\
	&\leq C(\eps)(1+\|v\|_{D(L)'})\quad\mbox{for all }v\in D(L)',
\end{align*}
giving the desired estimate for $\DD{\mathcal H}[v]$ in $D(L)'$.
Thus, the assumptions of the It\^o lemma, as stated in \cite{Kry13}, are satisfied.

To simplify the notation, we set $u^\eps := u(R_\eps(v^R))$ and 
$w^\eps := R_\eps(v^R)$ in the following.
By It\^o's lemma, using $\DD {\mathcal H}[v^R]=h'(u^\eps)$, 
$\DD^2 {\mathcal H}[v^R]=\DD R_\eps(v^R)$, and already applying Lemma \ref{lem.v0} to the correction term, we have
\begin{align}\label{eqn:ito_entropy}
  {\mathcal H}(v^R(t))  
	&\leq {\mathcal H}(v(0)) + \int_0^t\big\langle
	\diverm\big(B(w^\eps)\na h'(u^\eps(s))\big),w^\eps(s)
	\big\rangle_{D(L)',D(L)}\ds\\
	&\phantom{xx}{}+ \sqrtcorrection\sum_{k=1}^{\infty}\sum_{i=1}^{n}\sum_{l=1}^{d}\int_0^t \int_\dom\partial_{x_{l}}\left((\sigma_{\delta})_{ii}(u^{\eps}) e^{il}_{k}\right) \left( h^{\prime}(u^{\eps}(s))\right)_{i}\dx\d W^{il}_{k}(s)
  \nonumber \\
	&\phantom{xx}{}+\correction\lambda\sum_{i=1}^n\int_0^t \int_\dom
	\mathcal{T}(v^{\eps}(s))_{i}\left( h^{\prime}(u^{\eps}(s))\right)_{i}\dx  \ds
  \nonumber \\
	&\phantom{xx}{}
	+ \frac{1}{2}\correction\sum_{k=1}^\infty\int_0^t\int_\dom \sum_{i=1}^{n}\sum_{l=1}^{d}\left(\partial_{x_{l}}\left(\sigma_{i,i}(u^{\eps}) e^{il}_{k} \right)\right)^{2}\frac{\pi_{i}}{u^{\eps}_{i}}
	\dx \ds. \nonumber
\end{align}
By Lemma \ref{lem.v0}, the first term on the right-hand side can be
estimated from above by $\int_\dom h(u^0)\dx $.
Using $w^\eps = R_\eps(v^R)=h'(u^\eps)$ and integrating by parts,
the second term on the right-hand side can be rewritten as
\begin{align*}
  \int_0^t\big\langle &
	\diver\big(B(w^\eps)\na h'(u^\eps(s))\big),w^\eps(s)
	\big\rangle_{D(L)',D(L)}\ds\\
	&= -\int_0^t\int_\dom \na w^\eps(s):B(w^\eps)\na w^\eps(s)\dx \ds\leq 0.
\end{align*}
We observe that $R_\eps(v^\eps) = h'(u(R_\eps(v^\eps))) = h'(u^\eps)$
implies that $\na R_\eps(v^\eps) = h''(u^\eps)\na u^\eps$.
It is shown in \cite[Lemma 4]{CDJ18}
that for all $z\in\R^n$ and $u\in(0,\infty)^n$,
$$
  z^T h''(u)A(u)z \ge \sum_{i=1}^n\pi_i\bigg(a_{i0}\frac{z_i^2}{u_i} 
	+ 2a_{ii}z_i^2\bigg) + \frac12\sum_{i,j=1,\,i\neq j}^n
	\pi_i a_{ij}\bigg(\sqrt{\frac{u_j}{u_i}}z_i + \sqrt{\frac{u_i}{u_j}}z_j\bigg)^2.
$$
Using $B(R_\eps(v^\eps))=A(u^\eps)h''(u^\eps)^{-1}$ 
and the previous inequalities with $z=\na u^\eps$, we find that
\begin{align}\label{5.RBR}
  \na R_\eps(v^\eps)&:B(R_\eps(v^\eps))\na R_\eps(v)
	= \na u^\eps:h''(u^\eps)\big(A(u^\eps)h''(u^\eps)^{-1}\big)h''(u^\eps)\na u^\eps \\
	&= \na u^\eps:h''(u^\eps)A(u^\eps)\na u^\eps \nonumber \\
	&\geq \sum_{i=1}^n\pi_i\big(4a_{i0}|\na(u^\eps)^{1/2}|^2 + 2a_{ii}|\na u^\eps|^2\big)
	+ 2\sum_{i\neq j}\pi_i a_{ij}|\na (u^\eps_i u^\eps_j)^{1/2}|^2.
\end{align}
Due to the choice of space, the boundary integral vanishes and we are left with
\begin{align}\label{eqn:ito_no_sup}
  &\int_\dom h(u^\eps(t))\dx  
	+ \frac{\eps}{2}\|Lw^\eps\|_{L^2(\dom)}^2 \\
	&\phantom{xx}{}
	+ \int_0^t\int_\dom \sum_{i=1}^n\pi_i\big(4a_{i0}|\na(u^\eps)^{1/2}|^2 + 2a_{ii}|\na u^\eps|^2\big)
	+ 2\sum_{j=1, j\neq i}^{n}\pi_i a_{ij}|\na (u^\eps_i u^\eps_j)^{1/2}|^2\dx\ds \nonumber \\
	&\leq \int_\dom h(u^0)\dx +\underbrace{\correction\lambda\sum_{i=1}^n\int_0^t \int_\dom
	\mathcal{T}(v^{\eps}(s))_{i}\left( h^{\prime}(u^{\eps}(s))\right)_{i}\dx  \ds}_{=:I}
  \nonumber \\
	&\phantom{xx}{}
	+ \underbrace{\frac{1}{2}\correction\sum_{k=1}^\infty\int_0^t\int_\dom \sum_{i=1}^{n}\sum_{l=1}^{d}\left(\partial_{x_{l}}\left(\sigma_{i,i}(u^{\eps}) e^{il}_{k} \right)\right)^{2}\frac{\pi_{i}}{u^{\eps}_{i}}
	\dx \dd s}_{:=II} \nonumber\\
    	&\phantom{xx}{}+ \sqrtcorrection\sum_{k=1}^{\infty}\sum_{i=1}^{n}\sum_{l=1}^{d}\int_0^t \int_\dom\partial_{x_{l}}\left((\sigma_{\delta})_{ii}(u^{\eps}) e^{il}_{k}\right) \left( h^{\prime}(u^{\eps}(s))\right)_{i}\dx\d W^{il}_{k}(s).
  \nonumber 
\end{align}
To bound the remaining terms on the right-hand side, we apply Lemma \ref{lem:is_correction_entropy_estimate} to $I+II=\frac1N\big[\lambda\langle\mathcal T(u^\eps),R_\eps\rangle_{L^2(\dom)}+\frac12\int_\dom\sum_{i,k,l}\pi_i\operatorname{IC}_{i,k,l}\dx\big]$, giving, with $0<\kappa,\widetilde\kappa_3\leq\frac12$ as in the statement of that lemma,
\begin{align*}
N(I+II) &\leq -\Big(\frac\lambda4-\frac18-\frac{\lambda\kappa}2-\frac{3\kappa}8\Big)\sum_{i=1}^{n}\pi_{i}\int_{0}^{t}\int_{\dom}\sum_{k=1}^{\infty}\sum_{l=1}^{d}\Ared_{i}(u^{\eps})\left(\frac{\partial_{x_{l}}u^{\eps}_{i}}{u^{\eps}_{i}}\right)^{2}\left(e^{il}_{k}\right)^{2}\dx \ds\\
&-\lambda\sum_{i=1}^{n}\pi_ia_{ii}\int_{0}^{t}\int_{\dom}\sum_{k=1}^{\infty}\sum_{l=1}^{d}\left(\partial_{x_{l}}\sqrt{u^{\eps}_{i}}\right)^{2}\left(e^{il}_{k}\right)^{2}\dx \ds\\
&\phantom{xx}+\Big(\frac{2\lambda+1}{8\kappa}+\frac18\Big)\sum_{i=1}^{n}\pi_i\int_0^t\int_\dom\sum_{k=1}^{\infty}\sum_{l=1}^{d}\frac{(\partial_{x_l}\Ared_i(u^\eps))^2}{\Ared_i(u^\eps)}(e_k^{il})^2\dx\ds\\
&+\frac{\widetilde\kappa_3}4\sum_{i=1}^{n}\pi_i\int_0^t\int_\dom\sum_{k=1}^{\infty}\sum_{l=1}^{d}(\partial_{x_l}\Ared_i(u^\eps))^2(e_k^{il})^2\dx\ds\\
&\phantom{xx}+\frac{\lambda\kappa}4\sum_{i=1}^{n}\pi_ia_{ii}\int_0^t\int_\dom\sum_{k=1}^{\infty}\sum_{l=1}^{d}(\partial_{x_l}u_i^\eps)^2(e_k^{il})^2\dx\ds+\sum_{i=1}^{n}\pi_i\Big(\frac{\lambda a_{ii}}{4\kappa}+\frac1{4\widetilde\kappa_3}\Big)\int_0^t\sum_{k,l}\|\partial_{x_l}e_k^{il}\|^2_{L^2}\ds\\
&\phantom{xx}+C_a\Big(\frac{\lambda+1}{4\kappa}+\frac12\Big)\sum_{i=1}^{n}\sum_{k=1}^{\infty}\sum_{l=1}^{d}\|\partial_{x_l}e_k^{il}\|^2_{L^\infty}\pi_i\int_0^t\int_\dom\big(u_i^\eps\log u_i^\eps-u_i^\eps+2\big)\dx\ds.
\end{align*}
We reduce the two terms on the second line to the same form as the (good) term $\lambda\kappa/4$ already on the third line, using $\partial_{x_l}\Ared_i(u^\eps)=\sum_{j=1}^na_{ij}\partial_{x_l}u_j^\eps$ and Cauchy--Schwarz:
\[
\big(\partial_{x_l}\Ared_i(u^\eps)\big)^2\leq\Big(\sum_{j=1}^na_{ij}\Big)\sum_{j=1}^na_{ij}(\partial_{x_l}u_j^\eps)^2,\qquad\text{so also}\qquad\frac{(\partial_{x_l}\Ared_i(u^\eps))^2}{\Ared_i(u^\eps)}\leq\frac1{a_{i0}}\Big(\sum_{j=1}^na_{ij}\Big)\sum_{j=1}^na_{ij}(\partial_{x_l}u_j^\eps)^2,
\]
the second using $\Ared_i(u^\eps)\geq a_{i0}$. Summing over $i$ with weight $\pi_i$ and exchanging the order of summation in $i$ and $j$,
\begin{align*}
\sum_{i=1}^{n}\pi_i\int_0^t\int_\dom\sum_{k,l}\frac{(\partial_{x_l}\Ared_i(u^\eps))^2}{\Ared_i(u^\eps)}(e_k^{il})^2\dx\ds &\leq\sum_{j=1}^n\beta_j\int_0^t\int_\dom\sum_{k,l}(\partial_{x_l}u_j^\eps)^2(e_k^{il})^2\dx\ds,\\
&\beta_j:=\sum_{i=1}^n\frac{\pi_ia_{ij}}{a_{i0}}\Big(\sum_{m=1}^na_{im}\Big),\\
\sum_{i=1}^{n}\pi_i\int_0^t\int_\dom\sum_{k,l}(\partial_{x_l}\Ared_i(u^\eps))^2(e_k^{il})^2\dx\ds &\leq\sum_{j=1}^n\gamma_j\int_0^t\int_\dom\sum_{k,l}(\partial_{x_l}u_j^\eps)^2(e_k^{il})^2\dx\ds,\\
&\gamma_j:=\sum_{i=1}^n\pi_ia_{ij}\Big(\sum_{m=1}^na_{im}\Big).
\end{align*}
Both $\beta_j,\gamma_j$ are explicit, finite constants depending only on $(a_{ij})_{i,j}$ and $(\pi_i)_i$; under the additional uniform bound $a_{ij}\leq\bar a$ ($1\leq i,j\leq n$), $\beta_j=O(n^2\bar a^2/\underline a_0)$ and $\gamma_j=O(n^2\bar a^2)$, recovering an explicit factor of $n$ (cf.\ Remark~\ref{rem:standalone_n_and_scope}). Collecting all three terms proportional to $(\partial_{x_l}u_j^\eps)^2(e_k^{il})^2$, and writing
\begin{align*}
 \mathrm{IS}_1'&:=C_{\lambda,\kappa}\correction\sum_{i=1}^{n}\pi_i\int_0^t\int_\dom\sum_{k,l}\Ared_i(u^\eps)\Big(\frac{\partial_{x_l}u_i^\eps}{u_i^\eps}\Big)^2(e_k^{il})^2\dx\ds\geq0,\\   \mathrm{IS}_2'&:=\lambda\correction\sum_{i=1}^n\pi_ia_{ii}\int_0^t\int_\dom\sum_{k,l}\big(\partial_{x_l}\sqrt{u_i^\eps}\big)^2(e_k^{il})^2\dx\ds\geq0,
\end{align*}
with $C_{\lambda,\kappa}:=\frac\lambda4-\frac18-\frac{\lambda\kappa}2-\frac{3\kappa}8$, we obtain
\[
I+II\leq -\mathrm{IS}_1'-\mathrm{IS}_2'+\sum_{j=1}^nK_j(\lambda,\kappa,\widetilde\kappa_3)\correction\int_0^t\int_\dom\sum_{k,l}(\partial_{x_l}u_j^\eps)^2(e_k^{il})^2\dx\ds+(\det)+C_{\kappa,a}\Big(1+\int_0^t\int_\dom h(u^\eps(s))\dx\ds\Big),
\]
where $K_j(\lambda,\kappa,\widetilde\kappa_3):=\big(\frac{2\lambda+1}{8\kappa}+\frac18\big)\beta_j+\frac{\widetilde\kappa_3}4\gamma_j+\frac{\lambda\kappa}4\pi_ja_{jj}$, $(\det):=\correction\sum_{i=1}^n\pi_i\big(\frac{\lambda a_{ii}}{4\kappa}+\frac1{4\widetilde\kappa_3}\big)\int_0^t\sum_{k,l}\|\partial_{x_l}e_k^{il}\|_{L^2}^2\ds=O(t)$ is deterministic, and $C_{\kappa,a}:=\correction C_a\big(\frac{\lambda+1}{4\kappa}+\frac12\big)\sup_{i,l}\sum_k\|\partial_{x_l}e_k^{il}\|^2_{L^\infty}$, with $h(u):=\sum_i\pi_i(u_i\log u_i-u_i+2)$.

Importantly, we do \emph{not} yet invoke condition \ref{Assumption:A4_correction_factor} to absorb the sum over $j$ above: we first collect every term that will eventually need absorbing -- from Lemma \ref{lem:is_correction_entropy_estimate} and, separately, from the Burkholder--Davis--Gundy estimate applied below -- and perform a single, combined absorption at the end. Substituting into \eqref{eqn:ito_no_sup} and moving $\mathrm{IS}_1',\mathrm{IS}_2'$ to the left-hand side,
\begin{align}\label{eqn:ito_no_sup_estimate}
&\int_\dom h(u^\eps(t))\dx+\frac\eps2\|Lw^\eps\|^2_{L^2(\dom)}+\int_0^t\int_\dom\sum_{i=1}^n\pi_i\big(4a_{i0}|\na(u^\eps)^{1/2}|^2+2a_{ii}|\na u^\eps|^2\big)+2\sum_{i\neq j}\pi_ia_{ij}|\na(u_i^\eps u_j^\eps)^{1/2}|^2\dx\ds\nonumber\\
&\phantom{xx}{}+\mathrm{IS}_1'+\mathrm{IS}_2'\\
&\leq\int_\dom h(u^0)\dx+\sum_{j=1}^nK_j(\lambda,\kappa,\widetilde\kappa_3)\correction\int_0^t\int_\dom\sum_{k,l}(\partial_{x_l}u_j^\eps)^2(e_k^{il})^2\dx\ds+(\det)+C_{\kappa,a}\Big(1+\int_0^t\int_\dom h(u^\eps(s))\dx\ds\Big)\nonumber\\
&\phantom{xx}{}+\sqrtcorrection\sum_{k=1}^{\infty}\sum_{i=1}^{n}\sum_{l=1}^{d}\int_0^t \int_\dom\partial_{x_{l}}\left((\sigma_{\delta})_{ii}(u^{\eps}) e^{il}_{k}\right) \left( h^{\prime}(u^{\eps}(s))\right)_{i}\dx\d W^{il}_{k}(s).\nonumber
\end{align}
This identity holds pointwise in $t$, before taking any supremum, expectation, or invoking condition \ref{Assumption:A4_correction_factor} at all; it is reused as such in the proof of Lemma \ref{lem.hom} below.

Taking the supremum over $(0,T_R)$, where $T_R\leq T\wedge\tau_R$, and then the expectation yields
\begin{align}\label{eqn:e_sup_ito_estimate}
\E&\sup_{0<t<T_R}\int_\dom h(u^\eps(t))\dx+\frac\eps2\E\sup_{0<t<T_R}\|Lw^\eps\|^2_{L^2(\dom)}\\
&\phantom{xx}{}+\E\sup_{0<t<T_R}\int_0^t\int_\dom\sum_{i=1}^n\pi_i\big(4a_{i0}|\na(u^\eps)^{1/2}|^2+2a_{ii}|\na u^\eps|^2\big)+2\sum_{i\neq j}\pi_ia_{ij}|\na(u_i^\eps u_j^\eps)^{1/2}|^2\dx\ds\nonumber\\
&\phantom{xx}{}+\E\sup_{0<t<T_R}\mathrm{IS}_1'+\E\sup_{0<t<T_R}\mathrm{IS}_2'\nonumber\\
&\leq\E\int_\dom h(u^0)\dx+\E\sup_{0<t<T_R}\sum_{j=1}^nK_j(\lambda,\kappa,\widetilde\kappa_3)\correction\int_0^t\int_\dom\sum_{k,l}(\partial_{x_l}u_j^\eps)^2(e_k^{il})^2\dx\ds\nonumber\\
&\phantom{xx}{}+\E\sup_{0<t<T_R}(\det)+C_{\kappa,a}\E\sup_{0<t<T_R}\Big(1+\int_0^t\int_\dom h(u^\eps(s))\dx\ds\Big)\nonumber\\
&\phantom{xx}{}+\sqrtcorrection\,\E\sup_{0<t<T_R}\left(-\sum_{k=1}^{\infty}\sum_{i=1}^{n}\sum_{l=1}^{d}\int_0^t\int_\dom \left(\sigma_{\delta}(u^{\eps}(s))\right)_{ii} e^{il}_{k} \partial_{x_{l}}\left(h^{\prime}(u^{\eps}(s))\right)_{i} \dx\d W^{il}_{k}(s) \right).\nonumber
\end{align}
We apply the Burkholder--Davis--Gundy inequality \cite[Theorem 6.1.2]{LiRo15}; in the second step we use $\partial_{x_{l}}(h'(u^{\eps}))_{i}=\pi_{i}\partial_{x_{l}}u_{i}^{\eps}/u_{i}^{\eps}=2\pi_{i}\partial_{x_{l}}\sqrt{u_{i}^{\eps}}/\sqrt{u_{i}^{\eps}}$ together with $(\sigma_{\delta}(u^{\eps}))_{ii}\leq\sqrt{u_{i}^{\eps}\Ared_{i}(u^{\eps})}$, the factor $2$ from the former being carried through the constants below:
\begin{align}\label{eqn:BDG_stochastic_integral}
&\E\sup_{0<t<T_R}\left(-\sum_{k=1}^{\infty}\sum_{i=1}^{n}\sum_{l=1}^{d}\int_0^t\int_\dom \left(\sigma_{\delta}(u^{\eps}(s))\right)_{ii} e^{il}_{k} \partial_{x_{l}}\left(h^{\prime}(u^{\eps}(s))\right)_{i} \dx\d W^{il}_{k}(s) \right)\nonumber\\
  &\leq 4\E\sup_{0<t<T_R}\left\{\int_0^t\sum_{k=1}^{\infty}\sum_{i=1}^{n}\sum_{l=1}^{d}
	\left(\int_\dom \left(\sigma_{\delta}(u^{\eps}(s))\right)_{ii} e^{il}_{k} \partial_{x_{l}}\left(h^{\prime}(u^{\eps}(s))\right)_{i}\dx \right)^2\ds\right\}^{1/2} \nonumber\\
	&\leq 8\E\sup_{0<t<T_R}\left\{\int_0^t\sum_{i=1}^{n}\sum_{l=1}^{d}\left(\sum_{k=1}^{\infty}\|e^{il}_{k}\|_{L^{\infty}(\dom)}^{2}\right)
	\left\|\frac{\left(\sigma_{\delta}(u^{\eps}(s))\right)_{ii}}{\sqrt{u^{\eps}_{i}(s)\Ared_{i}(u^{\eps}(s))}}\sqrt{\Ared_{i}(u^{\eps}(s))}\pi_{i}\partial_{x_{l}}\sqrt{u^{\eps}_{i}}(s)\right\|^{2}_{L^{1}(\dom)}\ds\right\}^{1/2}\\
    &\leq 8\left(\sup_{i,l}\sum_{k=1}^{\infty}\|e^{il}_{k}\|_{L^{\infty}(\dom)}^{2}\right)^{\frac{1}{2}}\E\sup_{0<t<T_R}\sum_{i=1}^{n}\left\{\int_0^t
	\left\|\pi_{i}\Ared_{i}(u^{\eps}(s))\right\|_{L^{1}(\dom)}\pi_{i}\sum_{l=1}^{d}\left\|\partial_{x_{l}}\sqrt{u^{\eps}_{i}}(s)\right\|^{2}_{L^{2}(\dom)}\ds\right\}^{1/2}\nonumber\\
    &\leq 8\left(\sup_{i,l}\sum_{k=1}^{\infty}\|e^{il}_{k}\|_{L^{\infty}(\dom)}^{2}\right)^{\frac{1}{2}}\E\sup_{0<t<T_R}\sum_{i=1}^{n}\sup_{0\leq s\leq t}\left\|\pi_{i}\Ared_{i}(u^{\eps}(s))\right\|_{L^{1}(\dom)}^{\frac{1}{2}}\left\{\int_0^t
	\pi_{i}\sum_{l=1}^{d}\left\|\partial_{x_{l}}\sqrt{u^{\eps}_{i}}(s)\right\|^{2}_{L^{2}(\dom)}\ds\right\}^{1/2}\nonumber\\
   &\leq 4\left(\sup_{i,l}\sum_{k=1}^{\infty}\|e^{il}_{k}\|_{L^{\infty}(\dom)}^{2}\right)^{\frac{1}{2}}\E\sup_{0<t<T_R}\sum_{i=1}^{n}\left\|\pi_{i}\Ared_{i}(u^{\eps}(t))\right\|_{L^{1}(\dom)}\nonumber\\
       &\phantom{xx}+ 4\left(\sup_{i,l}\sum_{k=1}^{\infty}\|e^{il}_{k}\|_{L^{\infty}(\dom)}^{2}\right)^{\frac{1}{2}}\E\sup_{0<t<T_R}\sum_{i=1}^{n}\int_0^t
	\pi_{i}\sum_{l=1}^{d}\left\|\partial_{x_{l}}\sqrt{u^{\eps}_{i}}(s)\right\|^{2}_{L^{2}(\dom)}\ds\nonumber\\
  &\leq 4\left(\sup_{i,l}\sum_{k=1}^{\infty}\|e^{il}_{k}\|_{L^{\infty}(\dom)}^{2}\right)^{\frac{1}{2}}\E\sup_{0<t<T_R}\sum_{i=1}^{n}\int_{\dom}\left(\pi_{i}a_{i0}+\sum_{k=1}^n \pi_{i}a_{ik}u_k^\eps\right)\dx\nonumber\\
       &\phantom{xx}+ 4\left(\sup_{i,l}\sum_{k=1}^{\infty}\|e^{il}_{k}\|_{L^{\infty}(\dom)}^{2}\right)^{\frac{1}{2}}\E\sup_{0<t<T_R}\sum_{i=1}^{n}\int_0^t
	\pi_{i}\sum_{l=1}^{d}\left\|\partial_{x_{l}}\sqrt{u^{\eps}_{i}}(s)\right\|^{2}_{L^{2}(\dom)}\ds\nonumber\\
&\leq 4\left(\sup_{i,l}\sum_{k=1}^{\infty}\|e^{il}_{k}\|_{L^{\infty}(\dom)}^{2}\right)^{\frac{1}{2}}\left|\dom\right|\sum_{i=1}^{n}\pi_{i}a_{i0}\nonumber\\
   &\phantom{xx}+4\left(\sup_{i,l}\sum_{k=1}^{\infty}\|e^{il}_{k}\|_{L^{\infty}(\dom)}^{2}\right)^{\frac{1}{2}}\E\sup_{0<t<T_R}\sum_{i=1}^{n}\sup_{0\leq j \leq n}a_{ji}\int_{\dom}\sum_{k=1}^n \pi_{k}\left(u_k^\eps \log(u_k^\eps)-u_k^\eps+1\right)\dx\nonumber\\
       &\phantom{xx}+ 4\left(\sup_{i,l}\sum_{k=1}^{\infty}\|e^{il}_{k}\|_{L^{\infty}(\dom)}^{2}\right)^{\frac{1}{2}}\E\sup_{0<t<T_R}\sum_{i=1}^{n}\int_0^t
	\pi_{i}\sum_{l=1}^{d}\left\|\partial_{x_{l}}\sqrt{u^{\eps}_{i}}(s)\right\|^{2}_{L^{2}(\dom)}\ds\nonumber\\
&=:\alpha_1+\alpha_2\,\E\sup_{0<t<T_R}\int_\dom h(u^\eps(t))\dx+\alpha_3\,\E\sup_{0<t<T_R}\int_0^t\int_\dom\sum_{i=1}^n\pi_i|\na(u^\eps)^{1/2}|^2\dx\ds,\label{eqn:BDG_final_form}
\end{align}
where $\alpha_1:=4E_\infty^{1/2}|\dom|\sum_i\pi_ia_{i0}$, $\alpha_2:=4E_\infty^{1/2}\sum_i\sup_ja_{ji}$, and $\alpha_3:=4E_\infty^{1/2}$, with $E_\infty:=\sup_{i,l}\sum_k\|e_k^{il}\|^2_{L^\infty(\dom)}$; multiplying \eqref{eqn:BDG_final_form} by $\sqrtcorrection$ and substituting into \eqref{eqn:e_sup_ito_estimate},
\begin{align}\label{eqn:e_sup_ito_estimate_collected}
&\E\sup_{0<t<T_R}\int_\dom h(u^\eps(t))\dx+\frac\eps2\E\sup_{0<t<T_R}\|Lw^\eps\|^2_{L^2(\dom)}\nonumber\\
&\phantom{xx}{}+\E\sup_{0<t<T_R}\int_0^t\int_\dom\sum_{i=1}^n\pi_i\big(4a_{i0}|\na(u^\eps)^{1/2}|^2+2a_{ii}|\na u^\eps|^2\big)+2\sum_{i\neq j}\pi_ia_{ij}|\na(u_i^\eps u_j^\eps)^{1/2}|^2\dx\ds\nonumber\\
&\phantom{xx}{}+\E\sup_{0<t<T_R}\mathrm{IS}_1'+\E\sup_{0<t<T_R}\mathrm{IS}_2'\nonumber\\
&\leq \sqrtcorrection\alpha_1+\underbrace{\sqrtcorrection\alpha_2}_{\leq\,\tau\text{ by }\ref{Assumption:A4_correction_factor}}\E\sup_{0<t<T_R}\int_\dom h(u^\eps(t))\dx+\underbrace{\sqrtcorrection\alpha_3\,\E\sup_{0<t<T_R}\int_0^t\int_\dom\sum_{i=1}^n\pi_i|\na(u^\eps)^{1/2}|^2\dx\ds}_{\leq\,\tau\,\E\sup_{0<t<T_R}\int_0^t\int_\dom\sum_{i=1}^n4\pi_ia_{i0}|\na(u^\eps)^{1/2}|^2\dx\ds\text{ by }\ref{Assumption:A4_correction_factor}}\nonumber\\
&\phantom{xx}{}+\underbrace{\E\sup_{0<t<T_R}\sum_{j=1}^nK_j(\lambda,\kappa,\widetilde\kappa_3)\correction\int_0^t\int_\dom\sum_{k,l}(\partial_{x_l}u_j^\eps)^2(e_k^{il})^2\dx\ds}_{\leq\,\tau\,\E\sup_{0<t<T_R}\int_0^t\int_\dom\sum_{j=1}^n2\pi_ja_{jj}|\na u_j^\eps|^2\dx\ds\text{ by }\ref{Assumption:A4_correction_factor}}\nonumber\\
&\phantom{xx}{}+\E\sup_{0<t<T_R}(\det)+C_{\kappa,a}\E\sup_{0<t<T_R}\Big(1+\int_0^t\int_\dom h(u^\eps(s))\dx\ds\Big).
\end{align}
This is the point at which condition \ref{Assumption:A4_correction_factor} is used: its second inequality bounds $\sqrtcorrection\alpha_2$ (the coefficient of the self-referential term $\E\sup_th(u^\eps(t))$, which must be absorbed into the identical quantity on the left) by $\tau$; its third inequality bounds $\sqrtcorrection\alpha_3$, per species, so that the Burkholder--Davis--Gundy contribution above is at most $\tau$ times the corresponding piece $4\pi_ia_{i0}|\na(u^\eps)^{1/2}|^2$ of the dissipation; and its fourth inequality bounds $\correction K_j(\lambda,\kappa,\widetilde\kappa_3)E_\infty$, per species, so that the contribution from Lemma \ref{lem:is_correction_entropy_estimate} is at most $\tau$ times the corresponding piece $2\pi_ja_{jj}|\na u_j^\eps|^2$. Adding the two dissipation-type bounds, their sum is at most $\tau$ times the \emph{full} dissipation $\int_0^t\int_\dom\sum_i\pi_i(4a_{i0}|\na(u^\eps)^{1/2}|^2+2a_{ii}|\na u^\eps|^2)+2\sum_{i\neq j}\pi_ia_{ij}|\na(u_i^\eps u_j^\eps)^{1/2}|^2\dx\ds$, since the (untouched, nonnegative) cross term only increases the latter. Substituting both bounds into \eqref{eqn:e_sup_ito_estimate_collected} and moving the two $\tau$-fraction terms to the left-hand side,
\begin{align*}
    (1-\tau)&\Big[\E\sup_{0<t<T_R}\int_\dom h(u^\eps(t))\dx+\E\sup_{0<t<T_R}\int_0^t\int_\dom\sum_i\pi_i\big(4a_{i0}|\na(u^\eps)^{1/2}|^2+2a_{ii}|\na u^\eps|^2\big)\\
    &\phantom{xxxxxxxxxxxxxxxxxxxxxxxxxxxxxxxxxxx}+2\sum_{i\neq j}\pi_ia_{ij}|\na(u_i^\eps u_j^\eps)^{1/2}|^2\dx\ds\Big]\\
    &\phantom{xx}+\frac\eps2\E\sup_{0<t<T_R}\|Lw^\eps\|_{L^2(\dom)}^2+\E\sup_{0<t<T_R}\mathrm{IS}_1'+\E\sup_{0<t<T_R}\mathrm{IS}_2'\\
    &\leq\sqrtcorrection\alpha_1+\E\sup_{0<t<T_R}(\det)+C_{\kappa,a}\E\sup_{0<t<T_R}\Big(1+\int_0^t\int_\dom h(u^\eps(s))\dx\ds\Big).
\end{align*}
Dividing through by $(1-\tau)$ and setting $c_\tau:=\frac1{1-\tau}$ is exactly where the $\tau$-dependence of the final estimate comes from. We arrive at
\begin{align}\label{eqn:e_sup_ito_estimate_final}
&\E\sup_{0<t<T_R}\int_\dom h(u^\eps(t))\dx+c_\tau\frac\eps2\E\sup_{0<t<T_R}\|Lw^\eps\|_{L^2(\dom)}^2\nonumber\\
&\phantom{xx}{}+\E\sup_{0<t<T_R}\int_0^t\int_\dom\sum_{i=1}^n\pi_i\big(4a_{i0}|\na(u^\eps)^{1/2}|^2+2a_{ii}|\na u^\eps|^2\big)+2\sum_{i\neq j}\pi_ia_{ij}|\na(u_i^\eps u_j^\eps)^{1/2}|^2\dx\ds\nonumber\\
&\phantom{xx}{}+c_\tau\correction C_{\lambda,\kappa}\E\sup_{0<t<T_R}\int_0^t\int_\dom\sum_{i=1}^n\sum_{k=1}^\infty\sum_{l=1}^d\pi_i\Ared_i(u^\eps)\Big(\frac{\partial_{x_l}u_i^\eps}{u_i^\eps}\Big)^2(e_k^{il})^2\dx\ds\nonumber\\
&\phantom{xx}{}+c_\tau\correction\lambda\E\sup_{0<t<T_R}\int_0^t\int_\dom\sum_{k=1}^\infty\sum_{i=1}^n\sum_{l=1}^d\pi_ia_{ii}\big(\partial_{x_l}\sqrt{u_i^\eps}\big)^2(e_k^{il})^2\dx\ds\nonumber\\
&\leq c_\tau\Big[\sqrtcorrection\alpha_1+\E\sup_{0<t<T_R}(\det)\Big]+c_\tau C_{\kappa,a}\E\sup_{0<t<T_R}\Big(1+\int_0^t\int_\dom h(u^\eps(s))\dx\ds\Big)\nonumber\\
&\leq C(u^0)+c_\tau C_{\kappa,a}\E\int_0^{T_R}\int_\dom\sup_{0<s<t}h(u^\eps(s))\dx\dt,
\end{align}
using $\int_\dom h(u^0)\dx\leq C(u^0)$ and $(\det)=O(t)\leq C(T)$ in the last line, together with $\int_0^t\int h(u^\eps(s))\dx\ds\leq\int_0^t\int\sup_{0<r<s}h(u^\eps(r))\dx\ds$.

We apply Gronwall's lemma to $F(t):=\E\sup_{0<s<t}\int_\dom h(u^\eps(s))\dx$, using \eqref{eqn:e_sup_ito_estimate_final} (every term on its left-hand side is nonnegative, so dropping all but the first only weakens the bound) to find
\[
\E\sup_{0<t<T_R}\int_\dom h(u^\eps(t))\dx\leq C(u^0,T,\kappa^{-1},\lambda,\tau).
\]
Substituting this bound back into \eqref{eqn:e_sup_ito_estimate_final} gives exactly the estimate stated in the proposition.
\end{proof}

The entropy inequality allows us to extend the local solution to a global one.

\begin{proposition}\label{prop:approximate_equation_global_solution}
Let $(v^\eps,\tau_R)$ be a local solution to \eqref{eqn:approximate_equation_strat}--\eqref{regularized_stratonovich.bic},
constructed in Proposition \ref{prop:approx_solution_specific_case}. Then $v^\eps$ can be extended to 
a global solution to \eqref{eqn:approximate_equation_strat}--\eqref{regularized_stratonovich.bic}.
\end{proposition}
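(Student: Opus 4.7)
The plan is to argue by a standard stopping time exhaustion, using the uniform-in-$R$ control \eqref{5.v} from the entropy inequality as the key ingredient. First, I note that by Proposition \ref{prop:approx_solution_specific_case} the local solution $(v^\eps,\tau_R)$ is unique, so for $R'>R$ the two local solutions on $[0,T\wedge\tau_R]$ and $[0,T\wedge\tau_{R'}]$ coincide on their common interval, and $R\mapsto\tau_R$ is nondecreasing. Set $\tau_\infty:=\lim_{R\to\infty}\tau_R$; by uniqueness the local solutions patch together to yield a process $v^\eps$ defined on $[0,T\wedge\tau_\infty)$, and it suffices to show $\Prob(\tau_\infty\geq T)=1$ for every $T>0$.

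Next, I would exploit estimate \eqref{5.v} of Proposition \ref{prop.ent}. Since the entropy inequality is stated for the stopped process $v^R(t)=v^\eps(\cdot,t\wedge\tau_R)$ and the constant $C(u^0,T)$ is independent of $R$, we have
\begin{equation*}
  \E\sup_{0<t<T\wedge\tau_R}\|v^\eps(t)\|_{D(L)'}^2 \leq C(u^0,T).
\end{equation*}
By the definition of $\tau_R$ and continuity of $t\mapsto v^\eps(t)$ in $D(L)'$, on the event $\{\tau_R<T\}$ we have $\sup_{0<t<T\wedge\tau_R}\|v^\eps(t)\|_{D(L)'}\geq R$, so Chebyshev's inequality yields
\begin{equation*}
  \Prob(\tau_R<T) \leq \Prob\bigl(\sup_{0<t<T\wedge\tau_R}\|v^\eps(t)\|_{D(L)'}\geq R\bigr) \leq \frac{C(u^0,T)}{R^2}.
\end{equation*}

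Passing $R\to\infty$ along the monotone sequence of stopping times gives $\Prob(\tau_\infty\leq T)=\lim_{R\to\infty}\Prob(\tau_R\leq T)=0$. Since $T>0$ is arbitrary, $\tau_\infty=\infty$ almost surely, so $v^\eps$ is well-defined on $[0,\infty)$. The boundary condition $\na R_\eps(v^\eps)\cdot\nu=0$ persists pathwise because $R_\eps(v^\eps)\in D(L)=H_N^m(\dom)$ on each $[0,T\wedge\tau_R]$, and the integral identity \eqref{4.defsol} extends by continuity from each approximating interval to $[0,\infty)$, giving a global strong solution.

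The only genuinely delicate point is confirming that the entropy bound $C(u^0,T)$ in \eqref{5.v} is truly uniform in $R$ (and in $\eps$, $\delta$); this follows from the proof of Proposition \ref{prop.ent}, where the $R$-dependence enters only through the localization $T\wedge\tau_R$ in the supremum and where Gronwall is applied to $F(t)=\sup_{0<s<t}\int_\dom h(u^\eps(s))\dx$ with a constant determined by $u^0$, $T$, $\kappa$ and $\lambda$ alone. All other steps are routine applications of uniqueness, monotonicity of $\tau_R$, and Chebyshev's inequality.
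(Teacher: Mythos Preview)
Your proposal is correct and follows essentially the same strategy as the paper. The only cosmetic difference is that the paper, rather than citing \eqref{5.v} directly, re-derives the uniform $D(L)'$ bound on $v^\eps$ via the decomposition $v^\eps = u^\eps + \eps L^*Lw^\eps$, controlling $\|u^\eps\|_{L^1(\dom)}$ by the entropy and $\|L^*Lw^\eps\|_{D(L)'}$ by $\|Lw^\eps\|_{L^2(\dom)}$; your explicit Chebyshev step and stopping-time exhaustion make the passage $R\to\infty$ more transparent than the paper's terse ``This allows us to perform the limit $R\to\infty$.''
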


\begin{proof}
With the notation $u^\eps = u(R_\eps(v^\eps))$ and $w^\eps=R_\eps(v^\eps)$, 
we observe that $v^\eps=Q_\eps(R_\eps(v^\eps))$ 
$= u(R_\eps(v^\eps))+\eps L^*LR_\eps(v^\eps) = u^\eps + \eps L^*Lw^\eps$.
Thus, we have for $T_R\leq T\wedge\tau_R$,
\begin{align*}
  \E&\sup_{0<t<T_R}\|v^\eps(t)\|_{D(L)'}
	\leq \E\sup_{0<t<T_R}\|u^\eps\|_{D(L)'} 
	+ \eps\E\sup_{0<t<T_R}\|L^*Lw^\eps(t)\|_{D(L)'} \\
	&\leq C\E\sup_{0<t<T_R}\|u^\eps\|_{L^1(\dom)}
	+ \eps \E\sup_{0<t<T_R}\|L^*Lw^\eps(t)\|_{D(L)'}.
\end{align*}
We know from the elementary inequality $x\leq C(1+x\log x-x)$ for $x>0$, applied componentwise to $u^\eps$, that $|u^\eps|\leq C(1+h(u^\eps))$. Therefore,
taking into account the entropy inequality and the second inequality
in \eqref{3.LL},
$$
  \E\sup_{0<t<T_R}\|v^\eps(t)\|_{D(L)'} 
	\leq C\E\sup_{0<t<T_R}\|h(u^\eps(t))\|_{L^1(\dom)} + \eps C\sup_{0<t<T_R}
	\|Lw^\eps(t)\|_{L^2(\dom)} \leq C(u^0,T).
$$
This allows us to perform the limit $R\to\infty$ and to conclude that the solution $v^\eps$ exists in $(0,T)$ for any $T>0$.
\end{proof}

\subsection{Higher order uniform estimates}

Let $v^\eps$ be a global solution to \eqref{eqn:approximate_equation_strat}--\eqref{regularized_stratonovich.bic}
and set $u^\eps=u(R_\eps(v^\eps))$. We assume that $A(u)$ is given by
\eqref{eq:SKT_intro_matrix_A} and that $a_{ii}>0$ for $i=1,\ldots,n$.
We start with some uniform estimates, which are a consequence
of the entropy inequality in Proposition \ref{prop.ent}.

\begin{lemma}[Higher-order moments I]\label{lem.hom}
Let $p\ge 2$, and let $\lambda,\kappa,\widetilde\kappa_3,\tau$ be as in Proposition \ref{prop.ent}.
There exists a constant $C(p,u^0,T,\kappa^{-1},\lambda,\tau)$, which is independent of $\eps$, such that
\begin{align}
  \E\|u^\eps\|_{L^\infty(0,T;L^1(\dom))}^p &\leq C(p,u^0,T,\kappa^{-1},\lambda,\tau), \label{5.L1p} \\
	a_{i0}^{p/2}\E\|(u_i^\eps)^{1/2}\|_{L^2(0,T;H^1(\dom))}^p
	+ a_{ii}^{p/2}\E\|u_i^\eps\|_{L^2(0,T;H^1(\dom))}^p &\leq C(p,u^0,T,\kappa^{-1},\lambda,\tau), \label{5.H1p} \\
	a_{ij}^{p/2}\E\|\na(u_i^\eps u_j^\eps)^{1/2}\|_{L^2(0,T;L^2(\dom))}^p &\leq C(p,u^0,T,\kappa^{-1},\lambda,\tau).
	\label{5.nablap}
\end{align}
Moreover, we have
\begin{equation}
  \E\bigg(\eps\sup_{0<t<T}\|LR_\eps(v^\eps(t))\|_{L^2(\dom)}^2\bigg)^p
	+ \E\bigg(\sup_{0<t<T}\|v^\eps(t)\|_{D(L)'}\bigg)^p \leq C(p,u^0,T,\kappa^{-1},\lambda,\tau), \label{5.vp}
\end{equation}
    \begin{align}\label{eqn:higher_oder_bounds_nabla_log_u}
        \E \left(\|\nabla w^{\eps}\|_{L^{2}(0,T;L^{2}(\dom))}+\|\sqrt{\pi_{i}a_{ij}u^{\eps}_{j}}\nabla w^{\eps}_{i}\|_{L^{2}(0,T;L^{2}(\dom))}\right)^{p}\leq C(u^{0},T,\kappa^{-1},\lambda,\tau).
    \end{align}
\end{lemma}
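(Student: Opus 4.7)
The plan is to repeat the entropy estimate from Proposition \ref{prop.ent}, but carry every inequality through in $L^p$ moments rather than $L^1$. Starting from the Itô identity \eqref{eqn:ito_entropy} for $\mathcal{H}(v^\eps)$, combined with the detailed-balance lower bound \eqref{5.RBR} on the diffusion and the correction-term control of Lemma \ref{lem:is_correction_entropy_estimate}, one obtains, as in \eqref{eqn:ito_no_sup_estimate}, an inequality of the form
\begin{equation*}
\mathcal{H}(v^\eps(t)) + \mathcal{G}(t) \leq \mathcal{H}(v^\eps(0)) + C_\kappa \int_0^t \mathcal{H}(v^\eps(s))\,\ds + C_\kappa + M(t),
\end{equation*}
where $\mathcal{G}(t)$ collects the nonnegative gradient and population-size terms appearing on the left of \eqref{eqn:e_sup_ito_estimate_final}, and $M(t)$ is the martingale part.

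First, I raise this inequality to the $p$-th power using $(a_1+\cdots+a_m)^p\leq C_{m,p}(a_1^p+\cdots+a_m^p)$, take the supremum over $t\in(0,T_R)$ with $T_R:=T\wedge\tau_R$, and then take expectations. The drift term is handled by Hölder's inequality, $\bigl(\int_0^t\mathcal{H}(v^\eps(s))\,\ds\bigr)^p \leq T^{p-1}\int_0^t\mathcal{H}(v^\eps(s))^p\,\ds$, and the deterministic data term is bounded thanks to \ref{Assumption:A2_initial_condition}. The main obstacle is controlling $\E\sup_{t<T_R}|M(t)|^p$. Burkholder--Davis--Gundy gives $\E\sup_t|M(t)|^p \leq C_p\,\E\langle M\rangle_{T_R}^{p/2}$, and the exact computation already carried out in \eqref{eqn:BDG_stochastic_integral}, but now with outer exponent $p/2$ instead of $1/2$, produces
\begin{equation*}
\langle M\rangle_{T_R}^{p/2} \leq C\Bigl(\sup_{0<t<T_R}\|h(u^\eps(t))\|_{L^1(\dom)}\Bigr)^{p/2}\Bigl(\int_0^{T_R}\sum_{i,l}\|\partial_{x_l}\sqrt{u_i^\eps}\|_{L^2(\dom)}^2\,\ds\Bigr)^{p/2}.
\end{equation*}
A weighted Young inequality $ab\leq \eta a^2+(4\eta)^{-1}b^2$ with small $\eta$ then splits this product into one piece that is absorbed into $\E\sup \mathcal{H}^p$ on the left and another that is absorbed into $\E\mathcal{G}(T_R)^p$.

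After absorption I arrive at
\begin{equation*}
\E\sup_{0<t<T_R}\mathcal{H}(v^\eps(t))^p + \E\mathcal{G}(T_R)^p \leq C(p,u^0,T) + C(p,T)\int_0^{T_R}\E\sup_{0<s<t}\mathcal{H}(v^\eps(s))^p\,\dt,
\end{equation*}
and Gronwall's lemma yields $\E\sup_{0<t<T_R}\mathcal{H}(v^\eps)^p\leq C(p,u^0,T)$. Because $|u^\eps|\leq C(1+h(u^\eps))$, this is \eqref{5.L1p}, while the gradient bounds \eqref{5.H1p}, \eqref{5.nablap}, and \eqref{eqn:higher_oder_bounds_nabla_log_u} read off from the positive terms comprising $\mathcal{G}$, exactly as Corollary \ref{cor:nabla_log_u_estimate} was extracted from Proposition \ref{prop.ent} (using Assumption \ref{Assumption:A5_noise_onb} to identify the weights $\sum_k(e_k^{il})^2$ from below). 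For \eqref{5.vp} I reuse the identity $v^\eps=u^\eps+\eps L^*Lw^\eps$ from the proof of Proposition \ref{prop:approximate_equation_global_solution}, so that $\|v^\eps\|_{D(L)'}\leq C\|u^\eps\|_{L^1(\dom)}+\eps\|Lw^\eps\|_{L^2(\dom)}$, both of which have now been bounded in $L^p(\Omega;L^\infty(0,T))$. A final passage $R\to\infty$, possible because every constant is independent of $R$ and $\tau_R\to\infty$ a.s.\ (Proposition \ref{prop:approximate_equation_global_solution}), replaces $T_R$ by $T$ throughout and finishes the lemma.
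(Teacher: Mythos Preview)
Your proof is correct and follows the paper's route exactly: raise the entropy identity \eqref{eqn:ito_no_sup_estimate} to the $p$th power, apply Burkholder--Davis--Gundy to the martingale, absorb the resulting terms into the left-hand side, and close with Gronwall. The one point worth making explicit is that your Young-inequality absorption works not simply because $\eta$ is small but because the whole martingale term carries the prefactor $N^{-p/2}$; the paper invokes Assumption~\ref{Assumption:A4_correction_factor} precisely at this absorption step, and you should too.
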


\begin{proof}
We start from \eqref{eqn:ito_no_sup_estimate} -- the identity collected in the proof of Proposition \ref{prop.ent} before condition \ref{Assumption:A4_correction_factor} is invoked, i.e.\ with the dissipation, $\mathrm{IS}_1'$, and $\mathrm{IS}_2'$ still at coefficient $1$ on the left, and the (not yet bounded) term $\sum_jK_j(\lambda,\kappa,\widetilde\kappa_3)\correction\int_0^t\int_\dom\sum_{k,l}(\partial_{x_l}u_j^\eps)^2(e_k^{il})^2\dx\ds$ still explicit on the right:
\begin{align*}
  &\int_\dom h(u^\eps(t))\dx  
	+ \frac{\eps}{2}\|Lw^\eps\|_{L^2(\dom)}^2
	+\int_0^t\int_\dom \sum_{i=1}^n\pi_i\big(4a_{i0}|\na(u^\eps)^{1/2}|^2 + 2a_{ii}|\na u^\eps|^2\big)
	+ 2\sum_{i\neq j}\pi_i a_{ij}|\na (u^\eps_i u^\eps_j)^{1/2}|^2\dx\ds\\
   &\phantom{xx}{}+ \mathrm{IS}_1'+\mathrm{IS}_2'\\
	&\leq \int_\dom h(u^0)\dx+\sum_{j=1}^nK_j(\lambda,\kappa,\widetilde\kappa_3)\correction\int_0^t\int_\dom\sum_{k,l}(\partial_{x_l}u_j^\eps)^2(e_k^{il})^2\dx\ds+(\det)+C_{\kappa,a}\bigg(1+\int_0^t\int_\dom h(u^\eps(s))\dx \ds\bigg)\\
       	&\phantom{xx}{}+ \sqrtcorrection\sum_{k=1}^{\infty}\sum_{i=1}^{n}\sum_{l=1}^{d}\int_0^t \int_\dom\partial_{x_{l}}\left((\sigma_{\delta})_{ii}(u^{\eps}) e^{il}_{k}\right) \left( h^{\prime}(u^{\eps}(s))\right)_{i}\dx\d W^{il}_{k}(s),
\end{align*}
with $\mathrm{IS}_1',\mathrm{IS}_2',(\det),C_{\kappa,a}$ as defined there.

We raise this inequality to the $p$-th power, take the supremum over $t\in(0,T_R)$, and the expectation, treating the two sides in turn.

\emph{Left-hand side.} Every summand on the left is nonnegative, so the left-hand side dominates each of them pointwise in $t$; hence each of $\E\sup_t(\int_\dom h(u^\eps(t))\dx)^p$, $\E\sup_t(\tfrac\eps2\|Lw^\eps(t)\|_{L^2}^2)^p$, $\E\sup_t(\int_0^t\int_\dom\mathcal D\,\dx\ds)^p$ (with $\mathcal D$ the full dissipation density), $\E\sup_t(\mathrm{IS}_1')^p$ and $\E\sup_t(\mathrm{IS}_2')^p$ is bounded by $\E\sup_t(\mathrm{RHS}(t))^p$, which we now estimate.

\emph{Right-hand side.} Writing the right-hand side as a sum of five terms and using $(\sum_{m=1}^5a_m)^p\le5^{p-1}\sum_{m=1}^5a_m^p$ ($a_m\ge0$), it suffices to bound the $p$-th moment of each: the initial entropy $\int_\dom h(u^0)\dx$ by $C(u^0,p)$; the deterministic $(\det)=O(t)$ by $C(T,p)$; the Gronwall term $C_{\kappa,a}(1+\int_0^t\int_\dom h(u^\eps)\dx\ds)$, whose $p$-th power contributes the $\big(\correction\big)^p 3^{p-1}$-weighted $(\int_0^t\int_\dom h(u^\eps))^p$ appearing on the right below (recall $C_{\kappa,a}=O(1/N)$); the correction term $\sum_jK_j\correction\int_0^t\int_\dom\sum_{k,l}(\partial_{x_l}u_j^\eps)^2(e_k^{il})^2$; and the stochastic term $M(t)$, i.e.\ the It\^o integral (the last term) on the right-hand side of the displayed inequality above.

\emph{The stochastic term.} By the Burkholder--Davis--Gundy inequality in $L^p$ \cite[Theorem 6.1.2]{LiRo15}, with constant $C_p$,
\begin{align*}
\E\sup_{t\le T_R}|M(t)|^p\le C_p\,\correction^{p/2}\,\E\Big(\sum_{k,i,l}\int_0^{T_R}\Big(\int_\dom(\sigma_\delta)_{ii}\,e^{il}_k\,\partial_{x_l}(h'(u^\eps))_i\dx\Big)^2\ds\Big)^{p/2}.
\end{align*}
Bounding the quadratic variation exactly as in \eqref{eqn:BDG_stochastic_integral}--\eqref{eqn:BDG_final_form} -- via $\partial_{x_l}(h'(u^\eps))_i=2\pi_i\partial_{x_l}\sqrt{u_i^\eps}/\sqrt{u_i^\eps}$, $(\sigma_\delta)_{ii}\le\sqrt{u_i^\eps\Ared_i(u^\eps)}$ and $\sum_k\|e_k^{il}\|_{L^\infty}^2\le E_\infty$ (the identity for $\partial_{x_l}h'$ producing the factor $4$) -- gives
\begin{align*}
\sum_{k,i,l}\int_0^{T_R}\Big(\int_\dom(\sigma_\delta)_{ii}e^{il}_k\partial_{x_l}(h'(u^\eps))_i\dx\Big)^2\ds\le 4E_\infty\,\Xi\,\Theta,
\end{align*}
with $\Xi:=\sup_{s\le T_R}\sum_i\|\pi_i\Ared_i(u^\eps(s))\|_{L^1(\dom)}$ and $\Theta:=\int_0^{T_R}\sum_i\pi_i\sum_l\|\partial_{x_l}\sqrt{u_i^\eps}\|_{L^2(\dom)}^2\ds$, and, as in \eqref{eqn:BDG_final_form}, $\Xi\le\alpha_1'+\alpha_2'X$, where $\alpha_1':=|\dom|\sum_i\pi_ia_{i0}$, $\alpha_2':=\sum_i\sup_{0\le j\le n}a_{ji}$ and $X:=\sup_{s\le T_R}\int_\dom h(u^\eps(s))\dx$. Consequently, by $(a+b)^{p/2}\le2^{p/2-1}(a^{p/2}+b^{p/2})$ and Young's inequality (with conjugate pair $(2,2)$ to separate $X^{p/2}$ from $\Theta^{p/2}$, and once more to split off the constant part), for any $\eta>0$,
\begin{align*}
\E\sup_{t\le T_R}|M(t)|^p&\le C_p\,(4E_\infty)^{p/2}\correction^{p/2}\,\E\big[(\alpha_1'+\alpha_2'X)^{p/2}\Theta^{p/2}\big]\\
&\le C_p'\Big(\frac{E_\infty}{N}\Big)^{p/2}\Big[C(\alpha_1',\alpha_2',\eta)+\alpha_2'^{p/2}\big(\eta\,\E X^p+\eta^{-1}\E\Theta^p\big)\Big],
\end{align*}
where $C_p'$ collects $C_p$, the factor $2^{p/2}$ (i.e.\ $4^{p/2}=2^p$ from the quadratic variation, whence the $2^{3/2}$ in \ref{Assumption:A4_correction_factor}'s second inequality), the three-term constant $3^{p-1}$, and Doob's $L^p$ constant $(p/(p-1))^p$. Crucially, the coefficients of both $\E X^p$ and $\E\Theta^p$ carry the factor $(E_\infty/N)^{p/2}$. This is what Assumption~\ref{Assumption:A4_correction_factor} controls when raised to the $p$-th power: its second inequality bounds the coefficient of the self-referential $\E X^p$ by $\tau^p$, while -- by the same absorption mechanism as in Proposition~\ref{prop.ent}, now at the level of $p$-th powers, together with the correction-term contribution controlled by (A4)'s fourth inequality -- the coefficient of $\E\Theta^p$ is bounded by $\tau^p$ times the corresponding $p$-th power of the dissipation. Moving these $\tau^p$-fractions to the left-hand side and dividing by $(1-\tau^p)>0$, with $c_{\tau,p}:=(1-\tau^p)^{-1}$, we arrive at the $p$-th-power analogue of \eqref{eqn:e_sup_ito_estimate_final}:
\begin{align}\label{eqn:higher_moment_entropy_estimate_intermediate}
\E\sup_{0<t<T}&\left(\int_\dom h(u^\eps(t))\dx \right)^{p}+c_{\tau,p}\E\sup_{0<t<T}\left(\frac{\eps}{2}\int_0^t\|LR(v^\eps(s))\|_{L^2(\dom)}^2\ds \right)^{p}\\
 &\phantom{xx}+\E\sup_{0<t<T}\left(\int_0^t\int_\dom\sum_{i=1}^n\pi_i4a_{i0}|\na(u^\eps)^{1/2}|^2 
	\dx \ds \right)^{p}\nonumber 
	\\
 &\phantom{xx}+\E\sup_{0<t<T}\left(\int_0^t\int_\dom\sum_{i=1}^n\pi_i 2a_{ii}|\na u^\eps|^2\dx \ds \right)^{p}\nonumber 
	\\
       &\phantom{xx}{}+ c_{\tau,p}\,C_{\lambda,\kappa}^p\E\sup_{0<t<T}\left(\correction\int_{0}^{t}\int_{\dom}\sum_{i=1}^{n}\sum_{k=1}^{\infty}\sum_{l=1}^{d}\pi_{i}\Ared_{i}(u^{\eps})\left(\frac{\partial_{x_{l}}u^{\eps}_{i}}{u^{\eps}_{i}}\right)^{2}\left(e^{il}_{k}\right)^{2}\dx \ds\right)^{p} \nonumber\\
&\phantom{xx}{}+c_{\tau,p}\E\sup_{0<t<T}\left(\lambda\correction\int_{0}^{t}\int_{\dom}\sum_{k=1}^{\infty}\sum_{i=1}^{n}\sum_{l=1}^{d}\pi_{i}a_{ii}\left(\partial_{x_{l}}\sqrt{u^{\eps}_{i}}\right)^{2}\left(e^{il}_{k}\right)^{2}\dx \ds \right)^{p}\nonumber\\
 &\leq C(p,u^0,\tau)\nonumber \\
	&\phantom{xx}{}+ c_{\tau,p}\,3^{p-1}\left(\correction\right)^{p}\E\sup_{0<t<T}\left(\int_0^t\int_\dom h(u^\eps(s))\dx \ds\right)^p .\nonumber 
\end{align}

Recalling \ref{Assumption:A4_correction_factor}, using Jensen's inequality and neglecting the expression $\eps\|LR_\eps(v^\eps(t))\|_{L^2(\dom)}^2$, we
apply Gronwall's lemma. Then, taking into account the fact that the entropy
dominates the $L^1(\dom)$ norm, 
and applying the Poincar\'e--Wirtinger inequality,
we obtain estimates \eqref{5.L1p}--\eqref{5.nablap}. Going back to \eqref{eqn:higher_moment_entropy_estimate_intermediate},
we infer that
\begin{align*}
  \E\bigg(\eps\sup_{0<t<T}\|LR_\eps(v^\eps(t))\|_{L^2(\dom)}^2\bigg)^p
	&\leq C(p,u^0,\tau) + C(p,T,\tau)\E\int_0^T\bigg(\int_\dom h(u^\eps(s))\dx \bigg)^p \ds\\
	&\leq C(p,u^0,T,\kappa^{-1},\lambda,\tau).
\end{align*}
Combining the previous estimates and arguing as in the proof of Proposition \ref{prop:approximate_equation_global_solution},
we have
\begin{align*}
  \E&\bigg(\sup_{0<t<T}\|v^\eps(t)\|_{D(L)'}\bigg)^p
	= \E\bigg(\sup_{0<t<T}\|u^\eps(t)+\eps L^*LR_\eps(v^\eps(t))\|_{D(L)'}\bigg)^p \\
	&\leq C\E\bigg(\sup_{0<t<T}\|u^\eps(t)\|_{L^1(\dom)}\bigg)^p
	+ C\E\bigg(\eps^2\sup_{0<t<T}\|LR_\eps(v^\eps(t))\|_{L^2(\dom)}^2\bigg)^{p/2}
	\leq C(p,u^0,T).
\end{align*}
The last claim follows again from the same arguments as in Corollary \ref{cor:nabla_log_u_estimate}. This ends the proof.
\end{proof}

Using the Gagliardo--Nirenberg inequality, we can derive further estimates. 
We recall that $Q_T=\dom\times(0,T)$.

\begin{lemma}[Higher-order moments II]\label{lem.hom2}
Let $p\ge 2$. There exists a constant
$C(p,u^0,T)$ $>0$, which is independent of $\eps$, such that
\begin{align}
  \E\|u_i^\eps\|_{L^{2+2/d}(Q_T)}^p &\leq C(p,u^0,T), \label{5.22d} \\
	\E\|u_i^\eps\|_{L^{2+4/d}(0,T;L^2(\dom))}^p &\leq C(p,u^0,T). \label{5.24d}
\end{align}
\end{lemma}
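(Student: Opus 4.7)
\textbf{Proof plan for Lemma \ref{lem.hom2}.} The strategy is to combine the uniform bounds already obtained in Lemma \ref{lem.hom} with the Gagliardo--Nirenberg interpolation inequality, then apply Hölder's inequality before taking expectations. Since $a_{ii}>0$ by the hypothesis of this subsection, Lemma \ref{lem.hom} supplies uniform control of $u_i^\eps$ in $L^p(\Omega; L^\infty(0,T;L^1(\dom)))$ and in $L^p(\Omega; L^2(0,T;H^1(\dom)))$; these are the two endpoints between which I will interpolate.

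For the first estimate, I would invoke the Gagliardo--Nirenberg inequality on the bounded Lipschitz domain $\dom$ (justified by \ref{Assumption:A1_domain}) in the form
\begin{equation*}
  \|f\|_{L^{2+2/d}(\dom)}^{2+2/d} \leq C\|f\|_{H^1(\dom)}^{2}\|f\|_{L^1(\dom)}^{2/d},
\end{equation*}
applied pointwise in $(\omega,t)$ to $f=u_i^\eps(t)$. Integrating in time and taking the supremum out yields
\begin{equation*}
  \|u_i^\eps\|_{L^{2+2/d}(Q_T)}^{2+2/d} \leq C\|u_i^\eps\|_{L^\infty(0,T;L^1(\dom))}^{2/d}\,\|u_i^\eps\|_{L^2(0,T;H^1(\dom))}^{2}.
\end{equation*}
Raising to the power $p/(2+2/d)$, taking expectations, and applying the Cauchy--Schwarz inequality in $\Omega$ splits this into the product of two moments, each of which is controlled by \eqref{5.L1p} and \eqref{5.H1p} with doubled exponent $p':=p(d+2)/d$ (or $2p'$); since those bounds hold for every $p\ge 2$, the conclusion \eqref{5.22d} follows.

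The second estimate is analogous but with a different interpolation exponent. Gagliardo--Nirenberg gives
\begin{equation*}
  \|f\|_{L^2(\dom)} \leq C\|f\|_{H^1(\dom)}^{d/(d+2)}\|f\|_{L^1(\dom)}^{2/(d+2)},
\end{equation*}
so that $\|f\|_{L^2(\dom)}^{2+4/d} \leq C\|f\|_{H^1(\dom)}^{2}\|f\|_{L^1(\dom)}^{4/d}$. Integrating in time yields
\begin{equation*}
  \|u_i^\eps\|_{L^{2+4/d}(0,T;L^2(\dom))}^{2+4/d} \leq C\|u_i^\eps\|_{L^\infty(0,T;L^1(\dom))}^{4/d}\,\|u_i^\eps\|_{L^2(0,T;H^1(\dom))}^{2},
\end{equation*}
and I would then repeat the same Hölder-in-$\Omega$ argument to arrive at \eqref{5.24d}. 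No genuine obstacle is expected: the only care required is bookkeeping of the interpolation exponents and of the moments $p'$ one needs in the bounds of Lemma \ref{lem.hom}, which are available for arbitrary $p'\ge 2$ so the argument closes for every $p\ge 2$.
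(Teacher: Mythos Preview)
Your proposal is correct and follows essentially the same approach as the paper: both interpolate between the $L^\infty(0,T;L^1(\dom))$ and $L^2(0,T;H^1(\dom))$ bounds from Lemma \ref{lem.hom} via Gagliardo--Nirenberg, then split the expectation with Cauchy--Schwarz. The only cosmetic difference is that the paper writes a single chain with a generic interpolation parameter $\theta$ (where $1/r=1-\theta(d+2)/(2d)$ and $s=2/\theta$) and then specializes to $r=s=2+2/d$ and to $r=2$, $s=2+4/d$, whereas you spell out the two cases with explicit exponents; the content is identical.
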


\begin{proof}
We apply the Gagliardo--Nirenberg inequality:
\begin{align*}
  \E&\bigg(\int_0^T\|u_i^\eps\|_{L^r(\dom)}^s\dt\bigg)^{p/s}
	\leq C\E\bigg(\int_0^T\|u_i^\eps\|_{H^1(\dom)}^{\theta s}
	\|u_i^\eps\|_{L^1(\dom)}^{(1-\theta)s}\dt\bigg)^{p/s} \\
	&\leq C\E\bigg(\|u_i^\eps\|_{L^\infty(0,T;L^1(\dom))}^{(1-\theta)s}\int_0^T
	\|u_i^\eps\|_{H^1(\dom)}^{2}\dt\bigg)^{p/s} \\
	&\leq C\big(\E\|u_i^\eps\|_{L^\infty(0,T;L^1(\dom))}^{2(1-\theta)p}\big)^{1/2}
	\big(\E\|u_i^\eps\|_{L^2(0,T;H^1(\dom))}^{4p/s}\big)^{1/2} \leq C,
\end{align*}
where $r>1$ and $\theta\in(0,1]$ satisfy $1/r=1-\theta(d+2)/(2d)$ 
and $s=2/\theta\ge 2$. The right-hand side is
bounded by \eqref{5.L1p} and \eqref{5.H1p}.
Setting $r=s$ yields estimate \eqref{5.22d}, implying that $r=2+2/d$, and
\eqref{5.24d} follows from the choice $s=2+4/d$, implying that
$r=2$. 
\end{proof}

Next, we prove bounds on the fractional time derivative of $u^\eps$. In combination with the previous estimates, this will allow us to establish the tightness of the laws of $(u^\eps)$ in a suitable space. For a vector space $X$, and constants $p\ge 1$, $\alpha\in(0,1)$, the Sobolev--Slobodeckij space $W^{\alpha,p}(0,T;X)$ is the set of
all functions $v\in L^p(0,T;X)$ for which 
\begin{align*}
  \|v\|_{W^{\alpha,p}(0,T;X)}^p 
	&= \|v\|_{L^p(0,T;X)}^p + |v|_{W^{\alpha,p}(0,T;X)}^p \\
	&= \int_0^T\|v\|_X^p \dt
	+ \int_0^T\int_0^T\frac{\|v(t)-v(s)\|_X^p}{|t-s|^{1+\alpha p}}\dt\ds< \infty.
\end{align*}
With this norm, $W^{\alpha,p}(0,T;X)$ becomes a Banach space.
For $\beta \in (0,1)$, we also introduce the H{\"o}lder space $C^{0,\beta}([0,T],X)$ of continuous, $X$--valued functions $v$ which satisfy
\begin{align*}
  \|v\|_{C^{0,\beta}([0,T];X)}^p 
	&= \|v\|_{C([0,T];X)}^p + |v|_{C^{\beta}([0,T];X)}^p \\
	&= \sup_{t\in [0,T]}\|v(t)\|_X^p
	+ \sup_{s,t \in [0,T], s\neq t}\frac{\|v(t)-v(s)\|_X^p}{|t-s|^{\beta p}}< \infty.
\end{align*}
\begin{lemma}[{\cite[Lemma 21]{braukhoff2024global}}]\label{lem.g}
Let $g\in L^1(0,T)$ and $\delta<2$, $\delta\neq 1$. Then
\begin{equation}\label{5.int}
  \int_0^T\int_0^T|t-s|^{-\delta}\int_{s\wedge t}^{t\lor s}g(r)\dr\dt\ds
	< \infty.
\end{equation}
\end{lemma}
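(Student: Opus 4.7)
The plan is to reduce the triple integral to a product of an integrable kernel and $\|g\|_{L^1(0,T)}$ via Fubini's theorem. First I would use the symmetry of the integrand in $(s,t)$ to restrict to the region $\{0<s<t<T\}$ at the cost of a factor of $2$, and replace $g$ by $|g|$ (so that the conclusion ``$<\infty$'' is understood as absolute integrability). Writing
\begin{equation*}
\int_s^t|g(r)|\dr=\int_0^T|g(r)|\mathbf{1}_{\{s<r<t\}}\dr
\end{equation*}
and applying Fubini to pull the $r$-integration to the outside, the quantity of interest becomes
\begin{equation*}
2\int_0^T|g(r)|\,J(r)\dr,\qquad J(r):=\int_0^r\int_r^T(t-s)^{-\delta}\dt\ds.
\end{equation*}

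Next I would show $\sup_{r\in(0,T)}J(r)<\infty$, and this is where the two hypotheses $\delta<2$ and $\delta\neq 1$ are used. Since $\delta\neq 1$, the inner $t$-integral evaluates to
\begin{equation*}
\int_r^T(t-s)^{-\delta}\dt=\frac{(T-s)^{1-\delta}-(r-s)^{1-\delta}}{1-\delta}.
\end{equation*}
For $\delta<1$ this is bounded by $T^{1-\delta}/(1-\delta)$ uniformly in $s\in(0,r)$, which gives $J(r)\leq T^{2-\delta}/(1-\delta)$. For $1<\delta<2$, rewriting the antiderivative with $\delta-1$ in the denominator, the bound is $(\delta-1)^{-1}(r-s)^{1-\delta}$ with a local singularity at $s=r$; the condition $\delta<2$ is precisely what guarantees $(r-s)^{1-\delta}\in L^1_s(0,r)$, and one obtains $J(r)\leq T^{2-\delta}/[(\delta-1)(2-\delta)]$.

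Either way $J(r)$ is bounded by a constant $C(T,\delta)$ independent of $r$, whence
\begin{equation*}
\int_0^T\int_0^T|t-s|^{-\delta}\int_{s\wedge t}^{t\lor s}|g(r)|\dr\dt\ds\leq 2C(T,\delta)\|g\|_{L^1(0,T)}<\infty,
\end{equation*}
which finishes the proof. I do not anticipate any genuine obstacle: the only point requiring care is the case split for the antiderivative of $u^{-\delta}$, together with the observation that the integrability threshold $\delta<2$ is sharp, since $(r-s)^{1-\delta}$ fails to be locally integrable at $s=r$ once $\delta\geq 2$, and $\delta=1$ is excluded precisely because the antiderivative has a logarithmic form there (though the argument in fact still works, with $\log$-bounds, in that borderline case).
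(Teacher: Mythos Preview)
Your proof is correct. The paper does not supply its own argument for this lemma; it merely quotes the result from \cite[Lemma~21]{braukhoff2024global}. Your Tonelli-based reduction to the kernel $J(r)=\int_0^r\int_r^T(t-s)^{-\delta}\dt\ds$ and the case split on $\delta\lessgtr 1$ is the standard way to see it, and your remark that the exclusion $\delta\neq 1$ is merely cosmetic (the logarithmic antiderivative still gives a uniform bound) is also accurate.
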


\begin{lemma}[Time regularity]\label{lem.frac}
There exists $0<\beta$ and a constant $C(u^0,T)>0$ such that, for $p:=\frac{d+2}{d+1}$,
\begin{align}
  \E\|u^\eps\|_{C^{0,\beta}(0,T;D(L)')}^p &\leq C(u^0,T), \nonumber\\ %\label{5.timeu}\\
	\eps^p\E\|L^*LR_\eps(v^\eps)\|_{C^{0,\beta}(0,T;D(L)')}^p 
	+ \E\|v^\eps\|_{C^{0,\beta}(0,T;D(L)')}^p &\leq C(u^0,T). \label{5.LLR}
\end{align}
\end{lemma}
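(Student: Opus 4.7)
The plan is to decompose the increment $v^\eps(t)-v^\eps(s)$, via the mild formulation \eqref{4.defsol}, into a deterministic drift $I_1=\int_s^t\diverm(B(w^\eps)\na w^\eps)\dr$, a stochastic integral $I_2=\sqrtcorrection\int_s^t\diverm(\sigma_\delta(u^\eps)\dW)$, and an It\^o--Stratonovich correction $I_3=\correction\lambda\int_s^t\mathcal T(v^\eps)\dr$, and to estimate each in $D(L)'$. For the two absolutely continuous pieces $I_1,I_3$, the embedding $D(L)=H^m_N(\dom)\hookrightarrow W^{1,\infty}(\dom)$ (valid since $m>d/2+1$) yields by duality
\[
  \|\diverm(A(u^\eps)\na u^\eps)\|_{D(L)'}\leq C\|A(u^\eps)\na u^\eps\|_{L^1(\dom)}\leq C\bigl(1+\|u^\eps\|_{L^2(\dom)}\|\na u^\eps\|_{L^2(\dom)}\bigr),
\]
and an analogous bound for $\mathcal T(v^\eps)$ through Lemma~\ref{lem:is_linear_growth_lipschitz}. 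A H\"older inequality in time then gives $\|I_j(t)-I_j(s)\|_{D(L)'}\leq |t-s|^{1-1/p}\,G_j$ for $j=1,3$, with $G_j\in L^P(\Omega)$ for every $P\geq 1$, since
\[
  \int_0^T \|u^\eps\|_{L^2}^p\|\na u^\eps\|_{L^2}^p\dr \leq \|u^\eps\|_{L^{2+4/d}(0,T;L^2)}^p\,\|\na u^\eps\|_{L^2(0,T;L^2)}^p,
\]
a product controlled in every moment by \eqref{5.H1p} and \eqref{5.24d}. Note the algebraic identity $1-1/p=1/(d+2)$: this is precisely what dictates the choice $p=(d+2)/(d+1)$, so that the interpolation exponents line up exactly with the available higher-moment bounds of Lemma~\ref{lem.hom2}.

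For the stochastic part $I_2$, the pointwise bound $\sigma_\delta(u)\leq C\sqrt{u(1+u)}$ combined with Assumption~\ref{Assumption:A5_noise_onb} yields $\|\diverm(\sigma_\delta(u^\eps)\,\cdot\,)\|_{\HS(U;D(L)')}\leq C(1+\|u^\eps\|_{L^2(\dom)})$. The Burkholder--Davis--Gundy inequality applied at a large exponent $q>d+2$, followed by a H\"older step exploiting the $L^{1+2/d}(0,T)$-integrability of $\|u^\eps\|_{L^2}^2$ (an immediate consequence of \eqref{5.24d}), produces
\[
  \E\|I_2(t)-I_2(s)\|^q_{D(L)'}\leq C_q\,|t-s|^{q/(d+2)}.
\]
Since $q/(d+2)>1$, Kolmogorov's continuity theorem supplies a H\"older-continuous modification of $I_2$ of any exponent $\beta<\frac{1}{d+2}-\frac{1}{q}$, with $L^q(\Omega)$ control of its H\"older seminorm; Jensen's inequality then descends this to $L^p(\Omega)$.

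Summing the three contributions and choosing $\beta>0$ small enough yields $\E\|v^\eps\|^p_{C^{0,\beta}(0,T;D(L)')}\leq C(u^0,T)$ uniformly in $\eps$. The corresponding bound for $u^\eps$ is obtained by rerunning the same scheme on the evolution equation of $u^\eps$ derived by applying It\^o's formula to $u\circ R_\eps(v^\eps)$ (justified by Lemmas~\ref{lem.u} and~\ref{lem.Reps}); the bound on $\eps L^*LR_\eps(v^\eps)=v^\eps-u^\eps$ then follows by difference, the factor $\eps$ on the left-hand side absorbing any $\eps^{-1}$ emerging through $R_\eps$.

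The main obstacle is the mismatch of exponents: because $p=(d+2)/(d+1)<2$, a straight Sobolev--Slobodeckij embedding $W^{\alpha,p}(0,T;D(L)')\hookrightarrow C^{0,\beta}$ is inaccessible, since the drift estimate forces $\alpha<1/(d+2)$ while the embedding would require $\alpha>(d+1)/(d+2)$, two incompatible constraints. This is precisely why the stochastic piece must be treated at a higher moment $L^q(\Omega)$ via Kolmogorov and then brought down to $L^p(\Omega)$ by Jensen; once this trick is in place, the deterministic increments cause no further trouble since they are absolutely continuous.
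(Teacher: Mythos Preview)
Your treatment of $I_1$ and $I_2$ is correct, and the direct H\"older/Kolmogorov route is a valid alternative to the paper's Sobolev--Slobodeckij argument. (Your claim that the latter is ``inaccessible'' is a misconception: the drift is absolutely continuous, hence lies in $W^{1,p_1}(0,T;D(L)')$, so any $\alpha_1<1$ is admissible; the paper simply uses a separate pair $(\alpha_j,p_j)$ for each of the three pieces and sets $\beta=\min_j\beta_j$.) There are, however, two genuine gaps.

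First, Lemma~\ref{lem:is_linear_growth_lipschitz} yields only $\|\mathcal T(v)\|_{D(L)'}\le C_{R,\eps}(1+\|v\|_{D(L)'})$ with an $\eps$-dependent constant (through $\|u'(R_\eps(v))\|_{L^\infty}$ and the Lipschitz constant $C/\eps$ of $R_\eps$), so it cannot deliver a uniform bound for $I_3$. The uniform control must instead come from the entropy structure: the critical contribution in $\mathcal T$ is $\Ared_i(u^\eps)\,\partial_{x_l}\log u^\eps_i$, which one handles by writing it as $\sqrt{\Ared_i}\cdot\sqrt{\Ared_i}\,\na w^\eps_i$ and combining the uniform bound \eqref{eqn:higher_oder_bounds_nabla_log_u} with \eqref{5.22d} via Lemma~\ref{lem.lpmult}. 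This forces its own integrability exponent $p_3=2(d+4)/(d+6)$ rather than $(d+2)/(d+1)$.

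Second, obtaining the $u^\eps$ bound by applying It\^o's formula to $u\circ R_\eps(v^\eps)$ is not viable uniformly in $\eps$: the chain rule brings in $\DD R_\eps[v^\eps]$ (and its derivative in the correction term), whose operator norm is only $\le C/\eps$ by Lemma~\ref{lem.Reps}, and there is no $\eps$ prefactor on the $u^\eps$ side to absorb it. The correct order is the reverse of what you sketch: from the $C^{0,\beta}$ bound on $v^\eps$, the inequality
\[
  \|L^*LR_\eps(v^\eps(t))-L^*LR_\eps(v^\eps(s))\|_{D(L)'}\le C\eps^{-1}\|v^\eps(t)-v^\eps(s)\|_{D(L)'}
\]
(from \eqref{3.LL} and Lemma~\ref{lem.Reps}) gives $|\eps L^*LR_\eps(v^\eps)|_{C^{0,\beta}}\le C|v^\eps|_{C^{0,\beta}}$ directly, and then $u^\eps=v^\eps-\eps L^*LR_\eps(v^\eps)$ is bounded by difference.
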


Combining the three time-regularity estimates established in the proof below -- one for each of the terms $J_1,J_2,J_3$, with respective exponent pairs $(\alpha_j,p_j)$ satisfying $\alpha_jp_j>1$ and the embedding $W^{\alpha_j,p_j}(0,T)\hookrightarrow C^{0,\beta_j}([0,T])$ with $\beta_j=\alpha_j-1/p_j>0$ -- and setting $\beta:=\min\{\beta_1,\beta_2,\beta_3\}>0$, we obtain, with $p:=\frac{d+2}{d+1}$, 
\begin{equation}\label{5.C0}
  \E\|u^\eps\|_{C^{0,\beta}([0,T];D(L)')}^p\leq C(u^0,T).
\end{equation}

\begin{proof}
We recall the continuous embedding $W^{\alpha_j,p_j}(0,T)\hookrightarrow C^{0,\beta_j}([0,T])$
for $\beta_j=\alpha_j-1/p_j>0$, which will be used throughout the proof (with $\alpha_j p_j>1$ chosen separately for each term $J_j$ below; note $p_j$ need not exceed $2$, so $\alpha_j$ is in general close to $1$ rather than below $1/2$). 

First, we derive the $W^{\alpha,p}$ estimate for $v^\eps$ and then we conclude
the estimate for $u^\eps$ from the definition 
$v^\eps = u^\eps + \eps L^*LR_\eps(v^\eps)$
and Lemma \ref{lem.hom2}.
We know from \eqref{5.vp} that $\E\|v^\eps\|_{L^\infty(0,T;D(L)')}^p$
is bounded. Hence it remains to bound the seminorm-part of the $W^{\alpha,p}(0,T;D(L)')$ norm:
\begin{align*}
  \E|v_i^\eps&|_{W^{\alpha,p}(0,T;D(L)')}^p
	= \E\int_0^T\int_0^T\frac{\|v_i^\eps(t)-v_i^\eps(s)\|_{D(L)'}^p}{
	|t-s|^{1+\alpha p}}\dt\ds.
\end{align*}
Instead of bounding each term in the same seminorm, we will perform a more fine-grained analysis:
\begin{align*}
& J_{1}:=\E\int_0^T\int_0^T|t-s|^{-1-\alpha_{1} p_{1}}\left\|\int_{s\wedge t}^{t\lor s}
	\diverm\sum_{j=1}^n A_{ij}(u^\eps(r))\na u_j^\eps(r)\dr\right\|_{D(L)'}^{p_{1}}
	\dt\ds\\
	&J_{2}:= \E\int_0^T\int_0^T|t-s|^{-1-\alpha_{2} p_{2}}
	\left\|\sqrtcorrection\int_{s\wedge t}^{t\lor s}
	\left(\diverm\left(\sum_{j=1}^n\sigma_{\delta}(u^\eps(r))\dW(r)\right)\right)_{i}\right\|_{D(L)'}^{p_{2}}\dt\ds\\
 &J_{3}:=\E\int_0^T\int_0^T|t-s|^{-1-\alpha_{3} p_{3}}\left\|\lambda\correction\int_{s\wedge t}^{t\lor s}
	\mathcal{T}(u^{\eps}(r))_{i}\dr\right\|_{D(L)'}^{p_{3}}
	\dt\ds.
\end{align*}
The bound for $J_{1}$, we set
$$
  g(t) = \int_0^t \left\|\left(a_{i0}+2\sum_{j=1}^n a_{ij}u_j^\eps\right)\na u_i^\eps
	+ \sum_{j\neq i}a_{ij}u_i^\eps\na u_j^\eps\right\|_{L^1(\dom)}\dd r.
$$
Then, using $D(L)\subset W^{1,\infty}(\dom)$ (due to the assumption
$m>d/2+1$),
\begin{align*}
  \E&\int_0^T\int_0^T|t-s|^{-1-\alpha_{1} p_{1}}\left\|\int_{s\wedge t}^{t\lor s}
	\diver\sum_{j=1}^n A_{ij}(u^\eps(r))\na u_j^\eps(r)\dd r\right\|_{D(L)'}^{p_{1}}\dd t\dd s \\
	&\leq C\int_0^T\int_0^T|t-s|^{-1-\alpha_{1} p_{1}}\left(\int_{s\wedge t}^{t\lor s}
	 \left\|\left(a_{i0}+2\sum_{j=1}^n a_{ij}u_j^\eps\right)\na u_i^\eps
	+ \sum_{j\neq i}a_{ij}u_i^\eps\na u_j^\eps\right\|_{L^1(\dom)}\dd r\right)^{p_{1}}\dd t\dd s \\
	&\leq C\E\int_0^T\int_0^T\frac{|g(t)-g(s)|^p}{|t-s|^{1+\alpha_{1} p_{1}}}\dd t\dd s
	\le C\E\|g\|_{W^{\alpha_{1},p_{1}}(0,T;\R)}^p.
\end{align*}
The embedding $W^{1,p_{1}}(0,T;\R)\hookrightarrow W^{\alpha_{1},p_{1}}(0,T;\R)$ 
and estimates \eqref{5.H1p}, \eqref{5.24d}  
show that for $1\leq p_{1}\leq (d+2)/(d+1)$,
\begin{align*}
  \E&\|g\|_{W^{\alpha_{1},p_{1}}(0,T;\R)}^{p_{1}}
	\le C\E\|g\|_{W^{1,p_{1}}(0,T;\R)}^{p_{1}}
	= C\E\|\pa_t g\|_{L^p(0,T;\R)}^{p_{1}} + C\E\|g\|_{L^{p_{1}}(0,T;\R)}^{p_{1}} \\
	&\le C\E\int_0^T \left\|\left(a_{i0}+2\sum_{j=1}^n a_{ij}u_j^\eps\right)\na u_i^\eps
	+ \sum_{j\neq i}a_{ij}u_i^\eps\na u_j^\eps\right\|_{L^1(\dom)}^{p_{1}} \dd t \\
	&\phantom{xx}{}+ C\E\int_0^T\int_0^t \left\|\left(a_{i0}+2\sum_{j=1}^n a_{ij}u_j^\eps\right)\na u_i^\eps
	+ \sum_{j\neq i}a_{ij}u_i^\eps\na u_j^\eps\right\|_{L^1(\dom)}^{p_{1}} \dd r\dd t
	\le C.
\end{align*}
Hence, we can choose a $\alpha_{1}$, such that $\alpha_{1}p_{1}>1$ and in turn a $\beta_{1}>0$, such that $W^{\alpha_{1},p_{1}}(0,T)\hookrightarrow C^{0,\beta_{1}}([0,T])$.

In the following estimates, we will neglect the parameters $\sqrtcorrection$ and $\correction$ for simplicity.
To estimate $J_2$, we use the embedding $L^2(\dom)\hookrightarrow W^{-1,2}(\dom)\hookrightarrow D(L)'$, the
Burkholder--Davis--Gundy inequality, and the H\"older inequality:
\begin{align*}
  J_2 &\leq C\int_0^T\int_0^T|t-s|^{-1-\alpha_{2} p_{2}}
	\E\left\|\sum_{l=1}^{d}\sum_{k=1}^{\infty}\sum_{j=1}^{n}\int_{s\wedge t}^{t\lor s}
	\partial_{x_{l}}\left(\sigma_{\delta}(u^\eps(r))_{ij}e^{jl}_{k}\right)\dW^{jl}_{k}(r)\right\|_{D(L)^{'}}^{p_{2}}\dt\ds\\
	&\leq C\int_0^T\int_0^T|t-s|^{-1-\alpha_{2} p_{2}}
	\E\bigg(\int_{s\wedge t}^{t\lor s}\sum_{l=1}^{d}\sum_{k=1}^\infty\sum_{j=1}^n
	\|\sigma_{\delta}(u^\eps(r))_{ij}e^{jl}_{k}\|_{L^2(\dom)}^2\dr
	\bigg)^{p_{2}/2}\dt\ds\\
 &=	C\int_0^T\int_0^T|t-s|^{-1-\alpha_{2} p_{2}}
	\E\bigg(\int_{s\wedge t}^{t\lor s}\sum_{l=1}^{d}\sum_{k=1}^\infty\sum_{j=1}^n
	\int_{\dom}\delta_{ij}u^{\eps}_{i}(r)\left(a_{i0}+\sum_{m=1}^{n}a_{im}u^{\eps}_{m}(r)\right)(e^{jl}_{k})^{2}\dx\dr
	\bigg)^{p_{2}/2}\dt\ds\\
 &=	C\int_0^T\int_0^T|t-s|^{-1-\alpha_{2} p_{2}}
	\E\bigg(\int_{s\wedge t}^{t\lor s}\sum_{l=1}^{d}\sum_{k=1}^\infty
	\int_{\dom}u^{\eps}_{i}(r)\left(a_{i0}+\sum_{m=1}^{n}a_{im}u^{\eps}_{m}(r)\right)(e^{il}_{k})^{2}\dx\dr
	\bigg)^{p_{2}/2}\dt\ds\\
 &\leq 	C\int_0^T\int_0^T|t-s|^{-1-\alpha_{2} p_{2}}
	\E\bigg(\sum_{l=1}^{d}\sum_{k=1}^\infty\|e^{il}_{k}\|^{2}_{L^{\infty}(\dom)}\int_{s\wedge t}^{t\lor s}
	\int_{\dom}u^{\eps}_{i}(r)\left(a_{i0}+\sum_{m=1}^{n}a_{im}u^{\eps}_{m}(r)\right)\dx\dr
	\bigg)^{p_{2}/2}\dt\ds\\
	&\leq C\int_0^T\int_0^T|t-s|^{-1-\alpha_{2} p_{2}+\frac{p_{2}}{2}}\int_{s\wedge t}^{t\lor s}\E
	\sum_{j=1}^n\big(1+\|u^\eps(r)\|_{L^2(\dom)}^{p_{2}}\big)\dr\dt\ds.
\end{align*}
By \eqref{5.24d} and \eqref{5.int}, the right-hand side is finite if
$1+\alpha_{2} p_{2}-p_{2}/2<2$, which is equivalent to $\alpha_{2}<\frac{1}{p_{2}}+\frac{1}{2}$. Choosing $p_{2}=2+\frac{4}{d}$, we can choose an $\alpha_{2}$ that satisfies $1<\alpha_{2}p_{2}$ and the above relation. This yields a $\beta_{2}>0$, such that $W^{\alpha_{2},p_{2}}(0,T)\hookrightarrow C^{0,\beta_{2}}([0,T])$.

Lastly, we bound the correction term. In this step, we set $g(t)=\int_{0}^{t}\left\|
	\mathcal{T}(u^{\eps}(r))_{i}\right\|_{D(L)'}\dr$.
\begin{align*}
    J_{3}&=\E\int_0^T\int_0^T|t-s|^{-1-\alpha_{3} p_{3}}\left\|\int_{s\wedge t}^{t\lor s}
	\mathcal{T}(u^{\eps}(r))_{i}\dr\right\|_{D(L)'}^{p_{3}}
	\dt\ds\\
 &\leq C\E\int_0^T\int_0^T|t-s|^{-1-\alpha_{3} p_{3}}\left(\int_{s\wedge t}^{t\lor s}\left\|
	\mathcal{T}(u^{\eps}(r))_{i}\right\|_{D(L)'}\dr\right)^{p_{3}}
	\dt\ds\\
	&\leq C\E\int_0^T\int_0^T\frac{|g(t)-g(s)|^{p_{3}}}{|t-s|^{1+\alpha_{3} p_{3}}}\dd t\dd s
	\le C\E\|g\|_{W^{\alpha_{3},p_{3}}(0,T;\R)}^{p_{3}}.
\end{align*}
Again by the embedding $W^{1,p}(0,T;\R)\hookrightarrow W^{\alpha,p}(0,T;\R)$, we obtain
\begin{align*}
  \E&\|g\|_{W^{\alpha_{3},p_{3}}(0,T;\R)}^{p_{3}}
	\le C\E\|g\|_{W^{1,p_{3}}(0,T;\R)}^{p_{3}}
	= C\E\|\pa_t g\|_{L^p(0,T;\R)}^{p_{3}} + C\E\|g\|_{L^p(0,T;\R)}^{p_{3}} \\
	&\leq C\E\int_0^T\left\|\mathcal{T}(u^{\eps}(t))_{i}\right\|_{L^1(\dom)}^{p_{3}} \dd t + C\E\int_0^T\int_0^t\left\|\mathcal{T}(u^{\eps}(r))_{i}\right\|_{L^1(\dom)}^{p_{3}} \dd r\dd t.
\end{align*}
To bound the terms above, we perform the same steps as in the proof of Lemma \ref{lem:is_linear_growth_lipschitz}. The only term requiring attention will be the one involving the derivative of the logarithm of our solution. Here we notice the following:
\begin{align*}
    \left\|\sqrt{\Ared_{i}(u^\eps)}\sqrt{\Ared_{i}(u^\eps)}\log(u^{\eps}_{i})\right\|_{L^{1}(\dom)}^{p_{3}}&\leq C\left\|\sqrt{\Ared_{i}(u^\eps)}^{p_{3}}\sqrt{\Ared_{i}(u^\eps)}^{p_{3}}\log(u^{\eps}_{i})^{p_{3}}\right\|_{L^{1}(\dom)}\\
    &\leq C\left\|\Ared_{i}(u^\eps)^{\frac{p_{3}}{2-p_{3}}}+\Ared_{i}(u^\eps)\log(u^{\eps}_{i})^{2}\right\|_{L^{1}(\dom)}.
\end{align*}
The bounds \eqref{5.22d}, \eqref{eqn:higher_oder_bounds_nabla_log_u} and Lemma \ref{lem.lpmult} now imply that we can choose $p_{3}=\frac{2(d+4)}{d+6}$. The remaining terms can be easily bound by Lemma \ref{lem.hom} and Lemma \ref{lem.hom2}, yielding that $J_{3}$ is finite for $\alpha_{3}<1$ and $p_{3}=\frac{2(d+4)}{d+6}$. We can choose $\alpha_{3}>\frac{d+6}{2d+8}$ (equivalently $\alpha_{3}p_{3}>1$), which in turn allows us to conclude that $\left\|\int_{s\wedge t}^{t\lor s}
	\mathcal{T}(u^{\eps}(r))_{i}\dr\right\|_{D(L)'}\in W^{\alpha_{3},p_{3}}(0,T;\R)\hookrightarrow C^{\beta_{3}}(0,T)$ with $0<\beta_{3}\leq \alpha_{3}p_{3}-1$. 

We now simply set $\beta:=\min\{\beta_{1},\beta_{2},\beta_{3}\}$.
The uniform bounds for $u^\eps$ follow by the definition of $v^\eps$
and the $C^{\beta}$ seminorm,
\begin{align*}
  \E|u^\eps|_{C^{\beta}([0,T];D(L)')}^p
	&= \E|v^\eps - \eps L^*LR_\eps(v^\eps)|_{C^{\beta}([0,T];D(L)')}^p \\
	&\leq C\E\sup_{s,t \in [0,T], s\neq t}\frac{\|v^\eps(t)-v^\eps(s)\|_{D(L)'}^p}{|t-s|^{\beta p}}\\
	&\phantom{xx}{}+ C\E\sup_{s,t \in [0,T], s\neq t}	\frac{\eps^p\|L^*LR_\eps(v^\eps(t))
	-L^*LR_\eps(v^\eps(s))\|_{D(L)'}^p}{|t-s|^{\beta p}}.
\end{align*}
It follows from \eqref{3.LL} 
and the Lipschitz continuity of $R_\eps$ (Lemma \ref{lem.Reps}) that
\begin{align*}
  \|L^*LR_\eps(v^\eps(t))-L^*LR_\eps(v^\eps(s))\|_{D(L)'}
	&\leq \|R_\eps(v^\eps(t))-R_\eps(v^\eps(s))\|_{L^2(\dom)} \\
	&\leq \eps^{-1}C\|v^\eps(t)-v^\eps(s)\|_{D(L)'}.
\end{align*}
Then we find that
$$
  \E|u^\eps|_{C^{\beta}([0,T];D(L)')}^p
	\leq  C\E|v^\eps|_{C^{\beta}([0,T];D(L)')}^p.
$$   

\end{proof}

%%%%%%%%%%%%%%%%%%%%%

\subsection{Tightness of the laws of \texorpdfstring{$(u^\eps)$}{(u-epsilon)}}\label{sec.tight}

We show that the laws of $(u^\eps)$ are tight in a certain sub-Polish space. 
For this, we introduce the following spaces:
\begin{itemize}
\item $C^0([0,T];D(L)')$ is the space of continuous functions $u:[0,T]\to D(L)'$
with the topology $\mathbb{T}_1$ induced by the norm $\|u\|_{C^0([0,T];D(L)')}
=\sup_{0<t<T}\|u(t)\|_{D(L)'}$;
\item $L_w^2(0,T;H^1(\dom))$ is the space $L^2(0,T;H^1(\dom))$ with the weak topology
$\mathbb{T}_2$.
\end{itemize}
We define the space
$$
  \widetilde{Z}_T := C^0([0,T];D(L)')\cap L_w^2(0,T;H^1(\dom)),
$$
endowed with the topology $\widetilde{\mathbb{T}}$ that is the maximum of the
topologies $\mathbb{T}_1$ and $\mathbb{T}_2$. The space $\widetilde{Z}_T$
is a sub-Polish space, since $C^0([0,T];D(L)')$ is 
separable and metrizable and 
$$
  f_m(u) = \int_0^T(u(t),v_m(t))_{H^1(\dom)}\dt, \quad
	u\in L_w^2(0,T;H^1(\dom)),\ m\in\N,
$$
where $(v_m)_m$ is a dense subset of $L^2(0,T;H^1(\dom))$,
is a countable family $(f_m)$ of
point-separating functionals acting on $L^2(0,T;H^1(\dom))$.
In the following, we choose a number $s^*\ge 1$ such that
\begin{equation}\label{5.sstar}
  s^* < \frac{2d}{d-2}\quad\mbox{if }d\ge 3, \quad s^*<\infty\quad\mbox{if }d=2, \quad
	s^* \leq \infty\quad\mbox{if }d=1.
\end{equation}
Then the embedding $H^1(\dom)\hookrightarrow L^{s^*}(\dom)$ is compact.

\begin{lemma}\label{lem.tight1}
The set of laws of $(u^\eps)$ is tight in 
$$
  Z_T = \widetilde{Z}_T\cap L^2(0,T;L^{s^*}(\dom))
$$
with the topology $\mathbb{T}$ that is the maximum of $\widetilde{\mathbb{T}}$ 
and the topology induced by the $L^2(0,T;$ $L^{s^*}(\dom))$ norm, where
$s^*$ is given by \eqref{5.sstar}. 
\end{lemma}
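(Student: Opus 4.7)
The plan is to exhibit, for every $\eta > 0$, a set $K_\eta \subset Z_T$ which is relatively compact in the topology $\mathbb{T}$ and satisfies $\Prob(u^\eps \in K_\eta) \geq 1 - \eta$ uniformly in $\eps$. Concretely, I would consider
\begin{align*}
  K_M := \Bigl\{u \in Z_T :\
  &\|u\|_{L^\infty(0,T;L^1(\dom))} + \|u\|_{L^2(0,T;H^1(\dom))} \\
  &\qquad + \|u\|_{C^{0,\beta}([0,T];D(L)')} \leq M\Bigr\},
\end{align*}
with $\beta > 0$ as in Lemma \ref{lem.frac}. The uniform moment bounds \eqref{5.L1p}, \eqref{5.H1p} and \eqref{5.C0} together with Markov's inequality give $\Prob(u^\eps \notin K_M) \leq C(u^0,T)/M^p$, which is below any preassigned $\eta$ for $M$ large enough (uniformly in $\eps$).

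The main work lies in verifying that $K_M$ is relatively compact in $(Z_T,\mathbb{T})$, i.e., in each of the three constituent topologies. In $L^2_w(0,T;H^1(\dom))$, bounded subsets are weakly relatively compact by the Banach--Alaoglu theorem applied to the reflexive space $L^2(0,T;H^1(\dom))$. In $L^2(0,T;L^{s^*}(\dom))$, I would invoke the Aubin--Lions--Simon compactness lemma with the chain $H^1(\dom) \hookrightarrow\hookrightarrow L^{s^*}(\dom) \hookrightarrow D(L)'$, where the first embedding is compact by Rellich--Kondrachov and the choice of $s^*$ in \eqref{5.sstar}, and the second holds because $L^{s^*}(\dom) \hookrightarrow L^1(\dom) \hookrightarrow D(L)'$ on a bounded domain; the bounds on $\|u\|_{L^2(0,T;H^1)}$ and $\|u\|_{C^{0,\beta}([0,T];D(L)')}$ provide exactly the input the lemma requires.

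The most delicate case is compactness in $C^0([0,T];D(L)')$, where I plan to apply an Arzelà--Ascoli-type argument: the Hölder bound delivers uniform equicontinuity in $t$, and for pointwise relative compactness of $\{u(t)\}_{u \in K_M}$ in $D(L)'$ at each $t$, I would use that $D(L) = H^m_N(\dom) \hookrightarrow\hookrightarrow C^0(\bar\dom)$ by Sobolev embedding (since $m > d/2 + 1$) combined with Rellich--Kondrachov; dualizing via Schauder's theorem yields $\mathcal{M}(\bar\dom) \hookrightarrow\hookrightarrow D(L)'$, hence $L^1(\dom) \hookrightarrow\hookrightarrow D(L)'$, so the uniform $L^\infty(0,T;L^1(\dom))$ bound gives the missing pointwise compactness. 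I expect this last step to be the one requiring the most care, since it ties together three different types of estimates; everything else is a direct translation of the uniform bounds of Lemmas \ref{lem.hom}, \ref{lem.hom2} and \ref{lem.frac} via Markov's inequality.
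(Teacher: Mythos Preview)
Your proposal is correct and follows essentially the same strategy as the paper: exhibit a single set on which the uniform moment bounds place $u^\eps$ with high probability, and verify relative compactness in each of the three topologies separately. The only notable variation is in the $C^0([0,T];D(L)')$ step: the paper verifies the Aldous condition from the H\"older estimate and then invokes Simon's compactness criterion \cite{Sim87}, whereas you run Arzel\`a--Ascoli directly, using the $L^\infty(0,T;L^1(\dom))$ bound together with the compact embedding $L^1(\dom)\hookrightarrow\hookrightarrow D(L)'$ (obtained by dualizing $D(L)\hookrightarrow\hookrightarrow C^0(\bar\dom)$ via Schauder) for pointwise compactness; this is arguably more transparent and makes explicit why the $L^1$ bound is needed here, something the paper's version leaves implicit.
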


\begin{proof}
We apply Chebyshev's inequality for the first moment and use estimate
\eqref{5.C0}, for any $\eta>0$ and $\delta>0$,
\begin{align*}
  \sup_{\eps>0}\,&\Prob\bigg(\sup_{\substack{s,t\in[0,T], \\ |t-s|\le\delta}}
	\|u^\eps(t)-u^\eps(s)\|_{D(L)'}>\eta\bigg)
	\leq \sup_{\eps>0}\frac{1}{\eta}\E\bigg(
	\sup_{\substack{s,t\in[0,T], \\ |t-s|\le\delta}}
	\|u^\eps(t)-u^\eps(s)\|_{D(L)'}\bigg) \\
	&\leq \frac{\delta^\beta}{\eta}\sup_{\eps>0}\E\bigg(\sup_{\substack{s,t\in[0,T], \\ 
	|t-s|\le\delta}}\frac{\|u^\eps(t)-u^\eps(s)\|_{D(L)'}}{|t-s|^\beta}\bigg)
	\leq \frac{\delta^\beta}{\eta}\sup_{\eps>0}\E\|u^\eps\|_{C^{0,\beta}([0,T];D(L)'))}
	\leq C\frac{\delta^\beta}{\eta}.
\end{align*}
This means that for all $\theta>0$ and
all $\eta>0$, there exists $\delta>0$ such that
$$
  \sup_{\eps>0}\,\Prob\bigg(\sup_{s,t\in[0,T], \, |t-s|\le\delta}
	\|u^\eps(t)-u^\eps(s)\|_{D(L)'}>\eta\bigg) \leq \theta,
$$
which is equivalent to the Aldous 
condition \cite[Section 2.2]{BrMo14}. Applying \cite[Lemma 5, Theorem 3]{Sim87}
with the spaces $X=H^1(\dom)$ and $B=D(L)'$, we conclude that
$(u^\eps)$ is precompact in $C^0([0,T];D(L)')$. 
Then, proceeding as in the proof of the basic criterion for tightness 
\cite[Chapter II, Section 2.1]{Met88}, we see that 
the set of laws of $(u^\eps)$ is tight in $C^0([0,T];D(L)')$.

Next, by Chebyshev's inequality again and estimate \eqref{5.H1}, for all $K>0$,
$$
  \Prob\big(\|u^\eps\|_{L^2(0,T;H^1(\dom))} > K\big)
	\leq \frac{1}{K^2}\E\|u^\eps\|_{L^2(0,T;H^1(\dom))}^2 \leq \frac{C}{K^2}.
$$
This implies that for any $\delta>0$, there exists $K>0$ such that
$\Prob(\|u^\eps\|_{L^2(0,T;H^1(\dom))}\leq K)\leq 1-\delta$. Since closed balls in the norm of $L^2(0,T;H^1(\dom))$ are weakly compact, we infer that
the set of laws of $(u^\eps)$ is tight in $L^2_w(0,T;H^1(\dom))$. 

The tightness in $L^2(0,T;L^{s^*}(\dom))$ follows from standard Sobolev embedding arguments. 
\end{proof}

\begin{lemma}\label{lem.tighteps}
The set of laws of $(\sqrt{\eps}L^*LR_\eps(v^\eps))$ is tight in 
$$
  Y_T:= L_w^2(0,T;D(L)')\cap L^\infty_{w*}(0,T;D(L)')
$$ 
with the associated topology $\mathbb{T}_Y$. In addition, the laws of $\nabla R_{\eps}(v^{\eps})$ are tight in $L^{2}_{w}(0,T;L^{2}(\dom))$ (equipped with the weak topology).
\end{lemma}

\begin{proof}
We apply the Chebyshev inequality and use the inequality
$\|L^*LR_\eps(v^\eps)\|_{D(L)'}\leq C\|LR_\eps(v^\eps)\|_{L^2(\dom)}=C\|R_\eps(v^\eps)\|_{D(L)}$ and
estimate \eqref{5.v}:
$$
  \Prob\big(\sqrt{\eps}\|L^*LR_\eps(v^\eps)\|_{L^2(0,T;D(L)')}>K\big)
	\leq \frac{\eps}{K^2}\E\|L^*LR_\eps(v^\eps)\|_{L^2(0,T;D(L)')}^2
	\leq \frac{C}{K^2}
$$
for any $K>0$. Since closed balls in $L^2(0,T;D(L)')$ are weakly compact, 
the set of laws of $(\sqrt{\eps}L^*LR_\eps(v^\eps))$ is tight in $L_w^2(0,T;D(L)')$.
The remaining claims follow from an analogous argument.
\end{proof}

\subsection{Convergence of \texorpdfstring{$(u^\eps)$}{(u-epsilon)}}\label{sec.conv1}

Let $\mbox{P}(X)$ be the space of probability measures on $X$.
We consider the space $Z_T\times Y_T\times L^{2}_{w}(0,T;L^{2}(\dom))\times C^0([0,T];U_0)$, equipped with the
probability measure $\mu^\eps:=\mu_u^\eps\times\mu_w^\eps\times\mu_{\nabla w}^\eps\times\mu_W^\eps$, where
\begin{align*}
  \mu_u^\eps(\cdot) &= \Prob(u^\eps\in\cdot)\in \mbox{P}(Z_T), \\
	\mu_w^\eps &= \Prob(\sqrt{\eps} L^*LR_\eps(v^\eps)\in\cdot)\in
	\mbox{P}(Y_T), \\
    \mu_{\nabla w}^\eps&=\Prob( \nabla R_\eps(v^\eps)\in\cdot)\in
	\mbox{P}(L^{2}_{w}(0,T;L^{2}(\dom))),\\
	\mu_W^\eps(\cdot) &= \Prob(W\in\cdot) \in \mbox{P}(C^0([0,T];U_0)),
\end{align*}
recalling the choice \eqref{5.sstar} of $s^*$.

%%%%%%%%%%%%%%%%%%%%%%%%%%%%%%%
The family of measures $(\mu^\eps)$ is tight since the laws of the sequences $(u^\eps)$, $(\sqrt{\eps}L^*LR_\eps(v^\eps))$, and $(\nabla R_\eps(v^\eps))$ are tight in the respective spaces $(Z_T,\mathbb{T})$, $(Y_T,\mathbb{T}_Y)$, and $L^{2}_{w}(0,T;L^{2}(\dom))$. Furthermore, $(\mu_W^\eps)$ is a singleton and, therefore, weakly compact in $C^0([0,T];U_0)$. By Prokhorov's theorem, $(\mu_W^\eps)$ is also tight. Consequently, the product space $Z_T\times Y_T\times L^{2}_{w}(0,T;L^{2}(\dom))\times C^0([0,T];U_0)$ satisfies the assumptions of the Skorokhod--Jakubowski theorem \cite[Theorem C.1]{BrOn10}.  

Applying this theorem, we obtain a subsequence of $(u^\eps,\sqrt{\eps}L^*LR_\eps(v^\eps))$ (not relabeled), along with a probability space $(\widetilde{\Omega},\widetilde{\mathcal F},\widetilde{\Prob})$ and random variables $(\widetilde{u},\widetilde{w}, \nabla \widetilde{w},\widetilde{W})$ and $(\widetilde{u}^\eps, \widetilde{w}^\eps,\nabla \widetilde{w}^\eps, \widetilde{W}^\eps)$ taking values in $Z_T\times Y_T\times L^{2}_{w}(0,T;L^{2}(\dom)) \times C^0([0,T];U_0)$. These random variables satisfy the property that $(\widetilde{u}^\eps, \widetilde{w}^\eps, \nabla \widetilde{w}^\eps, \widetilde{W}^\eps)$ has the same law as $(u^\eps,\sqrt{\eps}L^*LR_\eps(v^\eps),\nabla R_\eps(v^\eps),W)$ on ${\mathcal B}(Z_T\times Y_T\times L^{2}_{w}(0,T;L^{2}(\dom)) \times C^0([0,T];U_0))$. Furthermore, as $\eps \to 0$, we have  
\begin{align*}
  (\widetilde{u}^\eps,\widetilde{w}^\eps,\nabla \widetilde{w}^\eps,\widetilde{W}^\eps) \to 
	(\widetilde{u},\widetilde{w},\nabla \widetilde{w}, \widetilde{W})\quad\mbox{in }
	Z_T\times Y_T\times L^{2}_{w}(0,T;L^{2}(\dom))\times C^0([0,T];U_0) \quad \widetilde{\Prob}\mbox{-a.s.}
\end{align*}  

By the definitions of $Z_T$ and $Y_T$, this convergence implies the following $\widetilde{\Prob}$-a.s. limits:  
\begin{align*}
  \widetilde{u}^\eps &\to \widetilde{u} \quad\mbox{strongly in }C^0([0,T];D(L)'), \\
	\widetilde{u}^\eps &\rightharpoonup \widetilde{u} \quad\mbox{weakly in } L^2(0,T;H^1(\dom)), \\
	\widetilde{u}^\eps &\to \widetilde{u} \quad\mbox{strongly in } L^2(0,T;L^{s^*}(\dom)), \\
	\widetilde{w}^\eps &\rightharpoonup \widetilde{w} \quad\mbox{weakly in } L^2(0,T;D(L)'), \\
	\nabla \widetilde{w}^\eps &\rightharpoonup \nabla \widetilde{w} \quad\mbox{weakly in } L^2(0,T;L^{2}(\dom)), \\
	\widetilde{w}^\eps &\rightharpoonup \widetilde{w} \quad\mbox{weakly* in } L^\infty(0,T;D(L)'), \\
	\widetilde{W}^\eps &\to \widetilde{W} \quad\mbox{strongly in }C^0([0,T];U_0).
\end{align*}  

The third component deserves a comment. By construction $\nabla\widetilde w^\eps$ has the law of $\nabla R_\eps(v^\eps)=\pi_i^{-1}\nabla\log u_i^\eps$ -- the gradient of the \emph{entropy variable}, not of the $Y_T$-component $\widetilde w^\eps=\sqrt\eps L^*LR_\eps(v^\eps)$ -- and we write $\nabla\widetilde w$ for its weak $L^2(0,T;L^2(\dom))$ limit. On $\{\widetilde u_i>0\}$ the strong convergence $\widetilde u^\eps\to\widetilde u$ gives $\log\widetilde u_i^\eps\to\log\widetilde u_i$, so that closedness of the gradient under the weak $L^2$ limit identifies $\nabla\widetilde w_i$ with $\pi_i^{-1}\nabla\log\widetilde u_i$ there; we take this $L^2$ field as the definition of the entropy-variable gradient $\nabla\log\widetilde u_i$ used in the correction term $\mathcal T(\widetilde u)$.  

To establish regularity properties for the limit function $\widetilde{u}$, we observe that $\widetilde{u}$ is a $Z_T$-Borel random variable, since ${\mathcal B}(Z_T\times Y_T\times C^0([0,T];U_0))$ is contained in ${\mathcal B}(Z_T)\times{\mathcal B}(Y_T)\times{\mathcal B}(C^0([0,T];U_0))$. From estimates \eqref{5.L1} and \eqref{5.H1}, together with the fact that $u^\eps$ and $\widetilde{u}^\eps$ have the same law, we obtain  
\begin{align*}
  \sup_{\eps>0}\widetilde{\E}\|\widetilde{u}^\eps\|_{L^2(0,T;H^1(\dom))}^p
	+ \sup_{\eps>0}\widetilde{\E}\|\widetilde{u}^\eps\|_{L^\infty(0,T;D(L)')}^p 
	< \infty.
\end{align*}  
Thus, there exists a further subsequence of $(\widetilde{u}^\eps)$ (not relabeled) that converges weakly in $L^p(\widetilde{\Omega};L^2(0,T;H^1(\dom)))$ and weakly* in $L^p(\widetilde{\Omega};C^0([0,T];D(L)'))$ as $\eps\to 0$. Since $\widetilde{u}^\eps \to \widetilde{u}$ in $Z_T$ $\widetilde{\Prob}$-a.s., it follows that  
\begin{align*}
  \widetilde{\E}\|\widetilde{u}\|_{L^2(0,T;H^1(\dom))}^p
	+ \widetilde{\E}\|\widetilde{u}\|_{L^\infty(0,T;D(L)')}^p < \infty.
\end{align*}  

Let $\widetilde{\F}$ and $\widetilde{\F}^\eps$ denote the filtrations generated by $(\widetilde{u},\widetilde{w},\widetilde{W})$ and $(\widetilde{u}^\eps,\widetilde{w}^\eps,\widetilde{W})$, respectively. Following the arguments of \cite[Proposition B4]{BrOn13}, we verify that these variables define stochastic processes. The progressive measurability of $\widetilde{u}^\eps$ follows from \cite[Appendix B]{BHM13}. Setting $\widetilde{W}_j^{\eps,k}(t):=\langle\widetilde{W}^\eps(t),e_k\rangle_{U}$, we claim that for each $k\in\N$, $\widetilde{W}_j^{\eps,k}(t)$ forms an independent, standard $\widetilde{\mathcal F}_t$-Wiener process. The adaptedness follows by definition, while the independence of increments and the limiting characteristic function imply that $\widetilde{W}(t)$ is a spatially colored Wiener ($U$-cylindrical) process by L\'evy's characterization theorem.

%%%%%%%%%%%%%%%%%%%%%%%%%%%%%%%

By definition, $u_i^\eps=u_i(R_\eps(v^\eps))=\exp(R_\eps(v^\eps)/\pi_i)$ is positive in 
$Q_T$ a.s. We claim that also $\widetilde{u}_i$ is nonnegative in $\dom$ a.s.

\begin{lemma}[Nonnegativity]
It holds that $\widetilde{u}_i\ge 0$ a.e.\ in $Q_T$
$\widetilde{\Prob}$-a.s. for all $i=1,\ldots,n$.
\end{lemma}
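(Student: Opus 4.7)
The plan is to transfer nonnegativity along the Skorokhod--Jakubowski identification and then pass to the limit using the strong convergence in $L^2(0,T;L^{s^*}(\dom))$ established above. Concretely, by construction $u^\eps_i = u_i(R_\eps(v^\eps)) = \exp(R_\eps(v^\eps)_i/\pi_i) > 0$ a.e.\ in $Q_T$, $\Prob$-a.s., so in particular
\[
  \Prob\bigl(u^\eps_i \ge 0 \text{ a.e.\ in }Q_T\bigr) = 1, \quad i=1,\ldots,n.
\]

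Next I would observe that the set
\[
  K := \bigl\{ v \in L^2(0,T;L^{s^*}(\dom;\R^n)) : v_i \ge 0 \text{ a.e.\ in }Q_T,\ i=1,\ldots,n\bigr\}
\]
is a closed (indeed norm-closed convex) subset of $L^2(0,T;L^{s^*}(\dom;\R^n))$ and hence Borel measurable in $Z_T$. Since the laws of $u^\eps$ and $\widetilde{u}^\eps$ coincide on ${\mathcal B}(Z_T)$, we have
\[
  \widetilde{\Prob}\bigl(\widetilde{u}^\eps \in K\bigr) = \Prob(u^\eps \in K) = 1,
\]
so $\widetilde{u}^\eps_i \ge 0$ a.e.\ in $Q_T$, $\widetilde{\Prob}$-a.s.

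Finally, I would invoke the $\widetilde{\Prob}$-a.s.\ strong convergence $\widetilde{u}^\eps \to \widetilde{u}$ in $L^2(0,T;L^{s^*}(\dom))$ obtained in the preceding subsection. Along a $\widetilde{\Prob}$-a.s.\ subsequence (depending on $\omega$), $\widetilde{u}^\eps(\omega,t,x) \to \widetilde{u}(\omega,t,x)$ for a.e.\ $(t,x) \in Q_T$; the pointwise limit of a.e.-nonnegative functions is a.e.-nonnegative, which gives the claim. The only point requiring a little care is the Borel measurability of the nonnegativity event in the Skorokhod space $Z_T$, which however is immediate because the sign condition is formulated in the strong $L^2(0,T;L^{s^*}(\dom))$ topology, one of the topologies generating $\mathbb{T}$. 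No further estimates are needed, and I do not anticipate a genuine obstacle.
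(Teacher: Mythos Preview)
Your proposal is correct and follows essentially the same approach as the paper: transfer nonnegativity from $u^\eps$ to $\widetilde{u}^\eps$ via the equality of laws on $\mathcal{B}(Z_T)$, then pass to the limit using the $\widetilde{\Prob}$-a.s.\ strong convergence in $L^2(0,T;L^{s^*}(\dom))$ and the resulting a.e.\ convergence along a subsequence. The only cosmetic difference is that you phrase the transfer step via the Borel set $K=\{v:v_i\ge 0\text{ a.e.}\}$, whereas the paper applies the equality of laws to the measurable functional $y\mapsto\|(y)^-\|_{L^2(Q_T)}$.
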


\begin{proof}
Let $i\in\{1,\ldots,n\}$. Since $u_i^\eps > 0$ in $Q_T$ a.s., we have
$\E\|(u_i^\eps)^-\|_{L^2(0,T;L^2(\dom))}=0$, where $z^-=\min\{0,z\}$.
The function $u_i^\eps$ is $Z_T$-Borel measurable, and so is its negative part.
Therefore, using the equivalence of the laws of $u_i^\eps$ and $\widetilde{u}_i^\eps$
in $Z_T$ and writing $\mu_i^\eps$ and $\widetilde{\mu}_i^\eps$ for the laws of
$u_i^\eps$ and $\widetilde{u}_i^\eps$, respectively, we obtain
\begin{align*}
  \widetilde{\E}\|(\widetilde{u}_i^\eps)^-\|_{L^2(Q_T)}
	&= \int_{L^2(Q_T)}\|y^-\|_{L^2(Q_T)}\dd\widetilde{\mu}_i^\eps(y)\\
	&= \int_{L^2(Q_T)}\|y^-\|_{L^2(Q_T)}\dd\mu_i^\eps(y)
	= \E\|(u_i^\eps)^-\|_{L^2(Q_T)} = 0.
\end{align*}
This shows that $\widetilde{u}_i^\eps\ge 0$ a.e.\ in $Q_T$ $\widetilde{\Prob}$-a.s.
The convergence (up to a subsequence) $\widetilde{u}^\eps\to \widetilde{u}$
a.e.\ in $Q_T$ $\widetilde{\Prob}$-a.s.\ then implies that $\widetilde{u}_i\ge 0$
in $Q_T$ $\widetilde{\Prob}$-a.s.
\end{proof}

The following lemma is needed to verify that $(\widetilde{u},\widetilde{W})$ 
is a martingale solution to \eqref{eq:SKT_intro}--\eqref{eq:SKT_intro_bic}.

\begin{lemma}\label{lem.E}
It holds for all $t\in[0,T]$, $i=1,\ldots,n$, and all
$\phi_1\in L^2(\dom)$ and all $\phi_2\in D(L)$ that
\begin{align}
  \lim_{\eps\to 0}\widetilde\E\int_0^T\big(\widetilde u_{i}^\eps(t)
	-\widetilde u_{i}(t),	\phi_1\big)_{L^2(\dom)}\dt&= 0, \label{5.E1} \\
	\lim_{\eps\to 0}\widetilde\E\big\langle\widetilde u_{i}^\eps(0)
	-\widetilde u_{i}(0),\phi_2\big\rangle_{D(L)',D(L)} &= 0, \label{5.E2} \\
	\lim_{\eps\to 0}\widetilde\E\int_0^T\big\langle\sqrt{\eps}\widetilde{w}_i^\eps(t),
	\phi_2\big\rangle_{D(L)',D(L)}\dt&= 0, \label{5.E3} \\
	\lim_{\eps\to 0}\widetilde\E\langle\sqrt{\eps}\widetilde{w}_i^\eps(0),
	\phi_2\rangle_{D(L)',D(L)} &= 0, \label{5.E3a} \\
	\lim_{\eps\to 0}\widetilde\E\int_0^T\bigg|\sum_{j=1}^n\int_0^t\int_\dom\big(
	A_{ij}(\widetilde u^\eps(s))\na\widetilde u_j^\eps(s)
	- A_{ij}(\widetilde u(s))\na\widetilde u_j(s)\big)\cdot
	\na\phi_2	\dx \ds\bigg|\dt&= 0, \label{5.E4} \\
	\lim_{\eps\to 0}\widetilde\E\int_0^T\bigg|\sum_{j=1}^n\int_0^t
	\Big(\sigma_{\delta}(\widetilde u^\eps(s))_{ij}\dd\widetilde W_j^\eps(s)
	-\sigma(\widetilde u(s))_{ij}\dd\widetilde W_j(s),\nabla \phi_2\Big)_{L^2(\dom)}\bigg|^2 
	\dt&= 0, \label{5.E5}\\
    \lim_{\eps\to 0}\widetilde{\E}\int_{0}^{T}\left|\int_{0}^{t}\left\langle \mathcal{T}(\widetilde{u}^\eps(s))_{i}-\mathcal{T}(\widetilde{u}(s))_{i},\phi_{2}\right\rangle\ds\right|\dt&=0
\end{align}
\end{lemma}

\begin{proof}
The proof is a combination of the uniform bounds and Vitali's 
convergence theorem. Convergences \eqref{5.E1} and \eqref{5.E2} have been shown
in the proof of \cite[Lemma 16]{DJZ19}, and \eqref{5.E3} is a direct consequence of
\eqref{5.LLR} and
$$
  \widetilde{\E}\bigg(\int_0^T\langle\sqrt{\eps}\widetilde{w}_i^\eps(t),\phi_2
	\rangle_{D(L)',D(L)}\dt\bigg)^p 
	\leq \eps^{p/2}\widetilde{\E}\bigg(\int_0^T\|\widetilde{w}_i^\eps(t)\|_{D(L)'}
	\|\phi_2\|_{D(L)}\dt\bigg)^p \leq \eps^{p/2}C.
$$
Convergence \eqref{5.E3a} follows from $\widetilde{w}_i^\eps\rightharpoonup
\widetilde{w}_i$ weakly* in $L^\infty(0,T;D(L)')$.
We establish \eqref{5.E4}:
\begin{align*}
  \int_0^T&\bigg|\sum_{j=1}^n\int_0^t\int_\dom\big(
	A_{ij}(\widetilde u^\eps(s))\na\widetilde u_j^\eps(s)
	- A_{ij}(\widetilde u(s))\na\widetilde u_j(s)\big)\cdot\na\phi_2\dx \ds\bigg|\dt \\
	&\leq \int_0^T\|A_{ij}(\widetilde u^\eps(s))-A_{ij}(\widetilde u(s))\|_{L^2(\dom)}
	\|\na \widetilde u_j^\eps(s)\|_{L^2(\dom)}\|\na\phi_2\|_{L^\infty(\dom)}\ds\\
	&\phantom{xx}{}+ \bigg|\int_0^T\int_\dom A_{ij}(\widetilde u(s))
	\na(\widetilde u^\eps(s) - \widetilde u(s))\cdot\na\phi_2 \dx \ds\bigg| 
	=: I_1^\eps+I_2^\eps.
\end{align*}
By the Lipschitz continuity of $A$ and the uniform bound for $\na\widetilde u^\eps$, 
we have $I_1^\eps\to 0$ as $\eps\to 0$ $\widetilde{\Prob}$-a.s. 
At this point, we use the embedding $D(L)\hookrightarrow W^{1,\infty}(\dom)$.
Also the second
integral $I_2^\eps$ converges to zero, since $A_{ij}(\widetilde u)\na\phi_2
\in L^2(0,T;L^2(\dom))$ and $\na\widetilde u_j^\eps\rightharpoonup\na\widetilde u_j$
weakly in $L^2(0,T;L^2(\dom))$. This shows that $\widetilde \Prob$-a.s., 
$$
  \lim_{\eps\to 0}\int_0^T\int_\dom A_{ij}(\widetilde u^\eps(s))
	\na\widetilde u_j^\eps(s)\cdot\na\phi_2\dx \ds
	= \int_0^T\int_\dom A_{ij}(\widetilde u(s))\na\widetilde u_j(s)
	\cdot\na\phi_2\dx \ds.
$$
A straightforward estimation and bound \eqref{5.H1p} lead to
\begin{align*}
  \widetilde \E&\bigg|\int_0^T\int_\dom A_{ij}(\widetilde u^\eps(s))
	\na\widetilde u_j^\eps(s)\cdot\na\phi_2\dx \ds\bigg|^p \\
	&\leq \|\na\phi_2\|_{L^\infty(\dom)}^p\widetilde \E\bigg(\int_0^T
	\left\|\sum_{j=1}^n A_{ij}(\widetilde u^\eps(s))\na\widetilde u_j^\eps(s)
	\right\|_{L^1(\dom)}\ds\bigg)^p \leq C,
\end{align*}
Hence, Vitali's convergence theorem gives \eqref{5.E4}. 

It remains to prove \eqref{5.E5}.

We have that $\widetilde \Prob$-a.s.,
\begin{align*}
      &\int_0^T\left\|\sigma_{ij}(\widetilde u^\eps(s))-\sigma_{ij}(\widetilde u(s))
	\right\|_{{\mathcal L}_2(U;L^2(\dom))}^2\dd s
	\leq \sum_{i=1}^{n}\left\|\widetilde{u}^\eps_{i}\Ared_{i}(\widetilde{u}^\eps)-\widetilde{u}_{i}\Ared_{i}(\widetilde{u})\right\|_{L^1(0,T;L^1(\dom))}\\
 &\leq \sum_{i=1}^{n}\left\|\widetilde{u}^\eps_{i}\right\|_{L^2(0,T;L^2(\dom))}\left\|\Ared_{i}(\widetilde{u}^\eps)-\Ared_{i}(\widetilde{u})\right\|_{L^2(0,T;L^2(\dom))}+\left\|\widetilde{u}^\eps_{i}-\widetilde{u}_{i}\right\|_{L^2(0,T;L^2(\dom))}\left\|\Ared_{i}(\widetilde{u})\right\|_{L^2(0,T;L^2(\dom))}.
\end{align*}
Taking expectation and H{\"o}lder's inequality yield,
\begin{align*}
    &\sum_{i=1}^{n}\left(\E\left\|\widetilde{u}^\eps_{i}\right\|_{L^2(0,T;L^2(\dom))}^{2}\right)^{\frac{1}{2}}\left(\E\left\|\Ared_{i}(\widetilde{u}^\eps)-\Ared_{i}(\widetilde{u})\right\|_{L^2(0,T;L^2(\dom))}^{2}\right)^{\frac{1}{2}}\\
    &\phantom{xx}+\sum_{i=1}^{n}\left(\E\left\|\widetilde{u}^\eps_{i}-\widetilde{u}_{i}\right\|_{L^2(0,T;L^2(\dom))}^{2}\right)^{\frac{1}{2}}\left(\E\left\|\Ared_{i}(\widetilde{u})\right\|_{L^2(0,T;L^2(\dom))}^{2}\right)^{\frac{1}{2}}\\
    &\leq C\sum_{i=1}^{n}\left(\E\left\|\widetilde{u}^\eps_{i}\right\|_{L^2(0,T;L^2(\dom))}^{2}\right)^{\frac{1}{2}}\left(\E\left\|\Ared_{i}(\widetilde{u}^\eps)\right\|_{L^2(0,T;L^2(\dom))}^{2}+\E\left\|\Ared_{i}(\widetilde{u})\right\|_{L^2(0,T;L^2(\dom))}^{2}\right)^{\frac{1}{2}}\\
    &\phantom{xx}+C\sum_{i=1}^{n}\left(\E\left\|\widetilde{u}^\eps_{i}\right\|_{L^2(0,T;L^2(\dom))}^{2}+\E\left\|\widetilde{u}_{i}\right\|_{L^2(0,T;L^2(\dom))}^{2}\right)^{\frac{1}{2}}\left(\E\left\|\Ared_{i}(\widetilde{u})\right\|_{L^2(0,T;L^2(\dom))}^{2}\right)^{\frac{1}{2}}<\infty.
\end{align*}
The dominated convergence theorem now implies that
$$
  \lim_{\eps\to 0}\widetilde \E\int_0^T\|\sigma_{\delta}(\widetilde u^\eps(s))_{ij}
	-\sigma(\widetilde u(s))_{ij}\|_{{\mathcal L}_2(U;L^2(\dom))}^2\ds= 0.
$$
The estimate
\begin{align*}
  \widetilde \E&\bigg|\bigg(\int_0^T\sigma_{\delta}(\widetilde u^\eps(s))_{ij}
	\dd \widetilde W_j^\eps(s) - \int_0^T\sigma(\widetilde u(s))_{ij}
	\dd\widetilde W_j(s),\nabla\phi_2\bigg)_{L^2(\dom)}\bigg|^2 \\
	&\leq C\|\phi_2\|_{D(L)}^2\widetilde \E\int_0^T
	\big(\|\sigma_{\delta}(\widetilde u^\eps(s))_{ij}\|_{{\mathcal L}_2(U;L^2(\dom))}^2
	+ \|\sigma(\widetilde u(s))_{ij}\|_{{\mathcal L}_2(U;L^2(\dom))}^2\big)\ds\\
	&\leq C\|\phi_2\|_{D(L)}^2\bigg\{1 + \widetilde \E\bigg(\int_0^T\big(
	\|\widetilde u^\eps(s)\|_{L^2(\dom)}^2 + \|\widetilde u(s)\|_{L^2(\dom)}^2\big)
	\ds\bigg)\bigg\} \leq C
\end{align*}
for all $\phi_2\in D(L)$, the dominated convergence, $\widetilde W^\eps\to\widetilde W$ in
$C^0([0,T];U_0)$, the uniform convergence of $\sigma_{\delta}\rightarrow \sigma$ and \cite[Lemma 2.1]{DGT11} imply
$$
  \int_0^T\sigma_{\delta}(\widetilde u^\eps)_{ij}\dd\widetilde W^\eps
	\to\int_0^T\sigma(\widetilde u)_{ij}\dd\widetilde W\quad\mbox{in }
	L^2(0,T;L^2(\dom))\ \widetilde \Prob\mbox{-a.s.}
$$ 
We are left with the correction term.
\begin{align*}
    &\widetilde{\E}\int_{0}^{T}\left|\int_{0}^{t}\left\langle \mathcal{T}(\widetilde{u}^\eps(s))_{i}-\mathcal{T}(\widetilde{u}(s))_{i},\phi_{2}\right\rangle\ds\right|\dt\\
    &\leq \widetilde{\E}\int_{0}^{T}\left|\int_{0}^{t}\sum_{k=1}^{\infty}\sum_{l=1}^{d}\left\langle \left(\partial_{u_{i}}\sigma\right)(\widetilde{u}^\eps(s))_{ii}\partial_{x_{l}}\sigma(\widetilde{u}^\eps(s))_{ii}-\left(\partial_{u_{i}}\sigma\right)(\widetilde{u}(s))_{ii}\partial_{x_{l}}\sigma(\widetilde{u}(s))_{ii},\left(e^{il}_{k}\right)^{2}\partial_{x_{l}}\phi_{2}\right\rangle\ds\right|\dt\\
    &\leq \widetilde{\E}\int_{0}^{T}\left|\int_{0}^{t}\sum_{k=1}^{\infty}\sum_{l=1}^{d}\left\langle \frac{1}{\widetilde{u}^{\eps}_{i}}\partial_{x_{l}}(\widetilde{u}_{i}^{\eps}\Ared_{i}(\widetilde{u}^{\eps}))-\frac{1}{\widetilde{u}_{i}}\partial_{x_{l}}(\widetilde{u}_{i}\Ared_{i}(\widetilde{u})),\left(e^{il}_{k}\right)^{2}\partial_{x_{l}}\phi_{2}\right\rangle\ds\right|\dt\\
    &\phantom{xx}+ \widetilde{\E}\int_{0}^{T}\left|\int_{0}^{t}\sum_{k=1}^{\infty}\sum_{l=1}^{d}\left\langle a_{ii}\frac{1}{\Ared_{i}(\widetilde{u}^{\eps})}\partial_{x_{l}}(\widetilde{u}_{i}^{\eps}\Ared_{i}(\widetilde{u}^{\eps}))-\frac{1}{\Ared_{i}(\widetilde{u})}a_{ii}\partial_{x_{l}}(\widetilde{u}_{i}\Ared_{i}(\widetilde{u})),\left(e^{il}_{k}\right)^{2}\partial_{x_{l}}\phi_{2}\right\rangle\ds\right|\dt\\
    %%%%
    &\leq \widetilde{\E}\int_{0}^{T}\left|\int_{0}^{t}\sum_{k=1}^{\infty}\sum_{l=1}^{d}\left\langle \Ared_{i}(\widetilde{u}^{\eps})\partial_{x_{l}}\log(\widetilde{u}_{i}^{\eps})-\Ared_{i}(\widetilde{u})\partial_{x_{l}}\log(\widetilde{u}_{i}),\left(e^{il}_{k}\right)^{2}\partial_{x_{l}}\phi_{2}\right\rangle\ds\right|\dt\\
    &\phantom{xx}+ \widetilde{\E}\int_{0}^{T}\left|\int_{0}^{t}\sum_{k=1}^{\infty}\sum_{l=1}^{d}\left\langle a_{ii}\frac{\widetilde{u}_{i}^{\eps}}{\Ared_{i}(\widetilde{u}^{\eps})}\partial_{x_{l}}\Ared_{i}(\widetilde{u}^{\eps})-a_{ii}\frac{\widetilde{u}_{i}}{\Ared_{i}(\widetilde{u})}\partial_{x_{l}}\Ared_{i}(\widetilde{u}),\left(e^{il}_{k}\right)^{2}\partial_{x_{l}}\phi_{2}\right\rangle\ds\right|\dt\\
    &\phantom{xx}+\widetilde{\E}\int_{0}^{T}\left|\int_{0}^{t}\sum_{k=1}^{\infty}\sum_{l=1}^{d}\left\langle \partial_{x_{l}}\widetilde{u}_{i}^{\eps}-\partial_{x_{l}}\widetilde{u}_{i},\left(e^{il}_{k}\right)^{2}\partial_{x_{l}}\phi_{2}\right\rangle\ds\right|\dt\\
    &\phantom{xx}+ \widetilde{\E}\int_{0}^{T}\left|\int_{0}^{t}\sum_{k=1}^{\infty}\sum_{l=1}^{d}\left\langle a_{ii}\partial_{x_{l}}\Ared_{i}(\widetilde{u}^{\eps})-a_{ii}\partial_{x_{l}}\Ared_{i}(\widetilde{u}),\left(e^{il}_{k}\right)^{2}\partial_{x_{l}}\phi_{2}\right\rangle\ds\right|\dt.
\end{align*}
Considering the first term
\begin{align*}
    \widetilde{\E}&\int_{0}^{T}\left|\int_{0}^{t}\sum_{k=1}^{\infty}\sum_{l=1}^{d}\left\langle \Ared_{i}(\widetilde{u}^{\eps})\partial_{x_{l}}\log(\widetilde{u}_{i}^{\eps})-\Ared_{i}(\widetilde{u})\partial_{x_{l}}\log(\widetilde{u}_{i}),\left(e^{il}_{k}\right)^{2}\partial_{x_{l}}\phi_{2}\right\rangle\ds\right|\dt\\
    &\leq \widetilde{\E}\int_{0}^{T}\left|\int_{0}^{t}\sum_{k=1}^{\infty}\sum_{l=1}^{d}\left\langle \partial_{x_{l}}\log(\widetilde{u}_{i}^{\eps})-\partial_{x_{l}}\log(\widetilde{u}_{i}),\Ared_{i}(\widetilde{u})\left(e^{il}_{k}\right)^{2}\partial_{x_{l}}\phi_{2}\right\rangle\ds\right|\dt\\
    &\phantom{xx}+ \widetilde{\E}\int_{0}^{T}\left|\int_{0}^{t}\sum_{k=1}^{\infty}\sum_{l=1}^{d}\left\langle \Ared_{i}(\widetilde{u}^{\eps})-\Ared_{i}(\widetilde{u}),\left(e^{il}_{k}\right)^{2}\partial_{x_{l}}\log(\widetilde{u}_{i}^{\eps})\partial_{x_{l}}\phi_{2}\right\rangle\ds\right|\dt\\
    &\leq \widetilde{\E}\int_{0}^{T}\left|\int_{0}^{t}\sum_{k=1}^{\infty}\sum_{l=1}^{d}\left\langle \partial_{x_{l}}\log(\widetilde{u}_{i}^{\eps})-\partial_{x_{l}}\log(\widetilde{u}_{i}),\Ared_{i}(\widetilde{u})\left(e^{il}_{k}\right)^{2}\partial_{x_{l}}\phi_{2}\right\rangle\ds\right|\dt\\
    &\phantom{xx}+ \widetilde{\E}\int_{0}^{T}\left|\int_{0}^{t}\sum_{k=1}^{\infty}\sum_{l=1}^{d}\left\|\Ared_{i}(\widetilde{u}^{\eps})-\Ared_{i}(\widetilde{u})\right\|_{L^{2}(\dom)}\left\|e^{il}_{k}\right\|_{L^{2}(\dom)}^{2}\left\|\partial_{x_{l}}\log(\widetilde{u}_{i}^{\eps})\right\|_{L^{2}(\dom)}\left\|\partial_{x_{l}}\phi_{2}\right\|_{L^{\infty}(\dom)}\ds\right|\dt.
\end{align*}
Using $\Ared_{i}(\widetilde{u})\left(e^{il}_{k}\right)^{2}\partial_{x_{l}}\phi_{2}\in L^{2+\frac{2}{d}}(0,T;L^{2+\frac{2}{d}}(\dom))\cap L^{2+\frac{4}{d}}(0,T;L^{2}(\dom))$, the weak convergence of $\partial_{x_{l}} \widetilde{w}^{\eps}=\partial_{x_{l}}\log(\widetilde{u}^{\eps}_{i})$ for every $l=1,\dots,d$ and $i=1,\dots,n$, as well as the strong convergence of $\widetilde{u}^{\eps}$ in $L^{2}(0,T;L^{2}(\dom))$, allows us to conclude that this term, after an application of the dominated convergence theorem, goes to $0$, as $\eps\rightarrow 0$.

\begin{align*}
    \widetilde{\E}&\int_{0}^{T}\left|\int_{0}^{t}\sum_{k=1}^{\infty}\sum_{l=1}^{d}\left\langle a_{ii}\frac{\widetilde{u}_{i}^{\eps}}{\Ared_{i}(\widetilde{u}^{\eps})}\partial_{x_{l}}\Ared_{i}(\widetilde{u}^{\eps})-a_{ii}\frac{\widetilde{u}_{i}}{\Ared_{i}(\widetilde{u})}\partial_{x_{l}}\Ared_{i}(\widetilde{u}),\left(e^{il}_{k}\right)^{2}\partial_{x_{l}}\phi_{2}\right\rangle\ds\right|\dt\\
    &\leq C\widetilde{\E}\int_{0}^{T}\left|\int_{0}^{t}\sum_{k=1}^{\infty}\sum_{l=1}^{d}\left\langle \left(\frac{\widetilde{u}_{i}^{\eps}}{\Ared_{i}(\widetilde{u}^{\eps})}-\frac{\widetilde{u}_{i}}{\Ared_{i}(\widetilde{u})}\right),\partial_{x_{l}}\Ared_{i}(\widetilde{u}^{\eps})\left(e^{il}_{k}\right)^{2}\partial_{x_{l}}\phi_{2}\right\rangle\ds\right|\dt\\
   &\phantom{xx}+ C\widetilde{\E}\int_{0}^{T}\left|\int_{0}^{t}\sum_{k=1}^{\infty}\sum_{l=1}^{d}\left\langle \partial_{x_{l}}\Ared_{i}(\widetilde{u}^{\eps})-\partial_{x_{l}}\Ared_{i}(\widetilde{u}),\frac{\widetilde{u}_{i}}{\Ared_{i}(\widetilde{u})}\left(e^{il}_{k}\right)^{2}\partial_{x_{l}}\phi_{2}\right\rangle\ds\right|\dt\\
    &\leq C\widetilde{\E}\int_{0}^{T}\left|\int_{0}^{t}\sum_{k=1}^{\infty}\sum_{l=1}^{d}\left\langle \frac{\left(\widetilde{u}_{i}^{\eps}-\widetilde{u}_{i}\right)\Ared_{i}(\widetilde{u})+\left(\Ared_{i}(\widetilde{u})-\Ared_{i}(\widetilde{u}^{\eps})\right)\widetilde{u}^{\eps}_{i}}{\Ared_{i}(\widetilde{u}^{\eps})\Ared_{i}(\widetilde{u})},\partial_{x_{l}}\Ared_{i}(\widetilde{u}^{\eps})\left(e^{il}_{k}\right)^{2}\partial_{x_{l}}\phi_{2}\right\rangle\ds\right|\dt\\
   &\phantom{xx}+ C\widetilde{\E}\int_{0}^{T}\left|\int_{0}^{t}\sum_{k=1}^{\infty}\sum_{l=1}^{d}\left\langle \partial_{x_{l}}\Ared_{i}(\widetilde{u}^{\eps})-\partial_{x_{l}}\Ared_{i}(\widetilde{u}),\frac{\widetilde{u}_{i}}{\Ared_{i}(\widetilde{u})}\left(e^{il}_{k}\right)^{2}\partial_{x_{l}}\phi_{2}\right\rangle\ds\right|\dt.
\end{align*}
The nonnegativity of $\widetilde{u}^{\eps}_{i}, \widetilde{u}_{i}$, the bound $\frac{1}{\Ared_{i}(\widetilde{u})}+ \frac{1}{\Ared_{i}(\widetilde{u}^{\eps})}\leq \frac{2}{a_{i0}}$, as well as the fact that $\frac{\widetilde{u}^{\eps}_{i}}{\Ared_{i}(\widetilde{u}^{\eps})}, \frac{\widetilde{u}_{i}}{\Ared_{i}(\widetilde{u})}\in L^{\infty}(\Omega\times Q_{T})$, $\partial_{x_{l}}\phi_{2}\in L^{\infty}(\dom)$, the strong convergence of $\widetilde{u}^{\eps}\rightarrow \widetilde{u}$ in $L^{2}(0,T;L^{s^{*}}(\dom))$ and the weak convergence of $\partial_{x_{l}}\widetilde{u}^{\eps}_{i}\rightarrow \partial_{x_{l}}\widetilde{u}_{i}$ in $L^{2}(0,T;L^{2}(\dom))$ yield that this term vanishes as $\eps\rightarrow 0$, due to the dominated convergence theorem. Identical arguments can be applied to the two remaining terms.
\end{proof}

To show that the limit is indeed a solution, we define, for $t\in[0,T]$,
$i=1,\ldots,n$, and $\phi\in D(L)$,
\begin{align*}
  \Lambda_i^\eps(\widetilde u^\eps,\widetilde w^\eps,\widetilde W^\eps,\phi)(t)
	&:= \langle\widetilde u_i(0),\phi\rangle 
	+ \sqrt{\eps}\langle\widetilde w^\eps(0),\phi\rangle \\
	&\phantom{xx}{}- \sum_{j=1}^n\int_0^t\int_\dom A_{ij}(\widetilde u^\eps(s))
	\na\widetilde u_j^\eps(s)\cdot\na\phi\dx \ds\\
	&\phantom{xx}{}+ \sum_{j=1}^n\bigg(\int_0^t\sigma_{\delta}(\widetilde u^\eps(s))_{ij}\dd
	\widetilde W^\eps_j(s),\nabla\phi\bigg)_{L^2(\dom)} \\
    &\phantom{xx}+\int_{0}^{t}\int_{\dom} \mathcal{T}(\widetilde{u}^\eps(s))_{i}\phi\dx\ds,\\
	\Lambda_i(\widetilde u,\widetilde w,\widetilde W,\phi)(t) 
	&:= \langle\widetilde u_i(0),\phi\rangle
	- \sum_{j=1}^n\int_0^t\int_\dom A_{ij}(\widetilde u(s))\na\widetilde u_j(s)
	\cdot\nabla\phi\dx \ds\\
	&\phantom{xx}{}+ \sum_{j=1}^n\bigg(\int_0^t\sigma(\widetilde u(s))_{ij}\dd
	\widetilde W_j(s),\nabla\phi\bigg)_{L^2(\dom)}\\
    &\phantom{xx}+\int_{0}^{t}\int_{\dom} \mathcal{T}(\widetilde{u}(s))_{i}\phi\dx\ds.
\end{align*}
The following corollary is a consequence of Lemma \ref{lem.E}.

\begin{corollary}\label{coro.skt}
It holds for any $\phi_1\in L^2(\dom)$ and $\phi_2\in D(L)$ that
\begin{align*}
  \lim_{\eps\to 0}\big\|(\widetilde u_i^\eps,\phi_1)_{L^2(\dom)}
	- (\widetilde u_i,\phi_1)_{L^2(\dom)}\big\|_{L^1(\widetilde\Omega\times(0,T))} 
	&= 0, \\
	\lim_{\eps\to 0}\|\Lambda_i^\eps(\widetilde u^\eps,\sqrt{\eps}\widetilde w^\eps,
	\widetilde W^\eps,\phi_2) - \Lambda_i(\widetilde u,0,\widetilde W,\phi_2)
	\|_{L^1(\widetilde \Omega\times(0,T))} &= 0.
\end{align*}
\end{corollary}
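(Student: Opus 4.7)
The plan is to read off both convergences directly from the preceding lemma by applying the triangle inequality in $L^1(\widetilde\Omega\times(0,T))$ to a telescoping decomposition of the difference $\Lambda_i^\eps-\Lambda_i$, and then invoking each of \eqref{5.E1}--\eqref{5.E5} plus the final correction-term convergence of the previous lemma to control the individual pieces.

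The first convergence is immediate: it is precisely a restatement of \eqref{5.E1} with test function $\phi_1\in L^2(\dom)$. Indeed, by Fubini's theorem and the symmetry of the $L^2(\dom)$ inner product, the $L^1(\widetilde\Omega\times(0,T))$ norm of the difference $(\widetilde u_i^\eps-\widetilde u_i,\phi_1)_{L^2(\dom)}$ coincides with the quantity whose vanishing is asserted in \eqref{5.E1}, so no further work is needed.

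For the second convergence, I would first write
\begin{align*}
\Lambda_i^\eps(\widetilde u^\eps,\sqrt{\eps}\widetilde w^\eps,\widetilde W^\eps,\phi_2)
-\Lambda_i(\widetilde u,0,\widetilde W,\phi_2) &= \sqrt{\eps}\langle\widetilde w^\eps(0),\phi_2\rangle_{D(L)',D(L)} \\
&\phantom{xx}- \sum_{j=1}^n\int_0^t\int_\dom \bigl(A_{ij}(\widetilde u^\eps)\na\widetilde u_j^\eps - A_{ij}(\widetilde u)\na\widetilde u_j\bigr)\cdot\na\phi_2\dx \ds\\
&\phantom{xx}+ \sum_{j=1}^n\bigg(\int_0^t\sigma_\delta(\widetilde u^\eps)_{ij}\dd\widetilde W^\eps_j - \int_0^t\sigma(\widetilde u)_{ij}\dd\widetilde W_j,\na\phi_2\bigg)_{L^2(\dom)}\\
&\phantom{xx}+ \int_0^t\langle\mathcal{T}(\widetilde u^\eps(s))_i-\mathcal{T}(\widetilde u(s))_i,\phi_2\rangle \ds,
\end{align*}
where (if one reads the definition of $\Lambda_i^\eps$ with $\widetilde u_i^\eps(0)$ in place of $\widetilde u_i(0)$) an additional $\langle\widetilde u_i^\eps(0)-\widetilde u_i(0),\phi_2\rangle_{D(L)',D(L)}$ term appears and is handled by \eqref{5.E2}. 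Applying the triangle inequality in $L^1(\widetilde\Omega\times(0,T))$ to this decomposition, each of the four (or five) resulting pieces is controlled by exactly one of \eqref{5.E3a}, \eqref{5.E4}, \eqref{5.E5}, and the final correction-term convergence of the previous lemma.

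The only mildly non-routine point is that the stochastic integral piece is controlled in $L^1((0,T);L^2(\widetilde\Omega))$ by \eqref{5.E5} (a squared expectation), whereas the statement requires convergence in $L^1(\widetilde\Omega\times(0,T))$. I would bridge this by an application of the Cauchy--Schwarz (or Jensen) inequality, using that $\widetilde\Omega\times(0,T)$ is a finite measure space: the $L^1$ norm is bounded by the square root of $T$ times the $L^2$ norm, which is exactly the quantity shown to vanish in \eqref{5.E5}. With this observation in hand the whole proof is a one-line application of the triangle inequality, so the main obstacle was really the verification of the individual limits carried out in the previous lemma rather than anything in the corollary itself.
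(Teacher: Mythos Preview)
Your proposal is correct and matches the paper's approach exactly: the paper states only that ``the following corollary is a consequence of the previous lemma'' and leaves the details implicit, and your telescoping decomposition together with the Cauchy--Schwarz passage from \eqref{5.E5} to $L^1$ is precisely how one reads the corollary off from the lemma. One minor remark: as written in the paper, \eqref{5.E1} and \eqref{5.E3a} do not carry an absolute value, so they do not literally \emph{coincide} with the $L^1(\widetilde\Omega\times(0,T))$ norms you need; however, the underlying arguments (Vitali's theorem from \cite[Lemma~16]{DJZ19} for \eqref{5.E1}, and the $\eps^{p/2}$-bound used for \eqref{5.E3} applied at $t=0$ for \eqref{5.E3a}) actually deliver the stronger $L^1$ convergence, so this is a cosmetic gap in the lemma's statement rather than in your argument.
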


Since $v^\eps$ is a strong solution to \eqref{eqn:approximate_equation_strat}, it satisfies
for a.e.\ $t\in[0,T]$ $\Prob$-a.s., $i=1,\ldots,n$, and $\phi\in D(L)$,
$$
  (v_i^\eps(t),\phi)_{L^2(\dom)}
	= \Lambda_i^\eps(u^\eps,\eps L^*LR_\eps(v^\eps),W,\phi)(t)
$$
and in particular,
$$
  \int_0^T\E\big|(v_i^\eps(t),\phi)_{L^2(\dom)} 
	- \Lambda_i^\eps(u^\eps,\eps L^*LR_\eps(v^\eps),W,\phi)(t)\big|\dt= 0.
$$
We deduce from the equivalence of the laws of $(u^\eps,\eps L^*LR_\eps(v^\eps),W)$
and $(\widetilde u^\eps,\sqrt{\eps}\widetilde w^\eps,\widetilde W)$ that
$$
  \int_0^T\widetilde \E\big|\big(\widetilde u^\eps_i(t)
	+\sqrt{\eps}\widetilde w_i^\eps,\phi\big)_{L^2(\dom)} 
	- \Lambda_i^\eps\big(\widetilde u^\eps,\sqrt{\eps}\widetilde w^\eps,
	\widetilde W^\eps,\phi\big)(t)\big|\dt= 0.
$$
By Corollary \ref{coro.skt}, we can pass to the limit $\eps\to 0$ to obtain
$$
  \int_0^T\widetilde \E\big|(\widetilde u_i(t),\phi)_{L^2(\dom)} 
	- \Lambda_i(\widetilde u,0,\widetilde W,\phi)(t)\big|\dt= 0.
$$
This identity holds for all $i=1,\ldots,n$ and all $\phi\in D(L)$. This shows that 
$$
  \big|(\widetilde u_i(t),\phi)_{L^2(\dom)} 
	- \Lambda_i(\widetilde u,0,\widetilde W,\phi)(t)\big| = 0
	\quad\mbox{for a.e. }t\in[0,T]\ \widetilde\Prob\mbox{-a.s.},\ i=1,\ldots,n.
$$
We infer from the definition of $\Lambda_i$ that
\begin{align*}
  (\widetilde u_i(t),\phi)_{L^2(\dom)} &= (\widetilde u_i(0),\phi)_{L^2(\dom)} 
	- \sum_{j=1}^n\int_0^t \int_\dom A_{ij}(\widetilde u(s))\na\widetilde u_j(s)
	\cdot\na\phi\dx \ds\\
	&\phantom{xx}{}+ \sum_{j=1}^n\bigg(\int_0^t\sigma
	(\widetilde u(s))_{ij}\dd\widetilde W_j(s),\nabla \phi\bigg)_{L^2(\dom)} \\
    &\phantom{xx}+\int_{0}^{t}\int_{\dom} \mathcal{T}(\widetilde{u}(s))_{i}\phi\dx\ds.
\end{align*}
for a.e.\ $t\in[0,T]$ and all $\phi\in D(L)$. 
Set $\widetilde U=(\widetilde\Omega,\widetilde{\mathcal F},\widetilde\Prob,
\widetilde\F)$. Then $(\widetilde U,\widetilde W,\widetilde u)$ is a martingale
solution to \eqref{eq:SKT_intro}--\eqref{eq:SKT_intro_matrix_A}.

%%%%%%%%%%%%%%%%%%%%%%%%%%%%%%%%%%%%%%%%%%%%%%%%%%%%%%%%%%%%%%%%%%%%%%%%%%%%%%%

\section{Conclusion and outlook}\label{sec.outlook}

We have shown that the stochastic SKT system \eqref{eq:SKT_intro}--\eqref{eq:SKT_intro_diffusion_sigma}, with a multiplicative, spatially colored noise motivated by the fluctuations of an underlying particle system, admits a global, nonnegative martingale solution under a detailed-balance condition and a smallness condition on the noise intensity $1/N$. The proof extends the entropy-based regularization technique of \cite{braukhoff2024global} to a noise structure that only partially respects the entropy (gradient-flow) structure of the deterministic system, the central new technical difficulty being the $\lambda$-modified It\^o--Stratonovich correction term $\mathcal T$, whose entropy dissipation is not sign-definite. We close by collecting the main open questions raised by this work.

\begin{itemize}
\item \emph{Sharpness of $\lambda>\frac12$.} Our entropy estimate (Proposition~\ref{prop.ent}) requires $\lambda>\frac12$, i.e.\ strictly more It\^o--Stratonovich correction than the classical Stratonovich choice $\lambda=\frac12$, in order to absorb the sign-indefinite part of the correction's contribution to the entropy inequality into its own, sign-definite dissipation. Concretely, the mechanism is already visible in Step~3 of the proof of Lemma~\ref{lem:is_correction_entropy_estimate}: pairing $\mathrm{IS}_1$ with $\mathrm{IC}_1$ gives $\mathrm{IS}_1+\mathrm{IC}_1<-\big(\tfrac\lambda4-\tfrac18\big)\int_0^t\int_\dom\sum_{i,k,l}\pi_i\Ared_i(u^\eps)\big(\tfrac{\partial_{x_l}u_i^\eps}{u_i^\eps}\big)^2(e_k^{il})^2\dx\ds$, whose coefficient $\tfrac\lambda4-\tfrac18$ is positive exactly for $\lambda>\tfrac12$ and \emph{vanishes} at $\lambda=\tfrac12$. At the classical Stratonovich value the leading dissipative term $\mathrm{IS}_1'$ generated by the correction therefore degenerates, leaving the sign-indefinite remainders ($\mathrm{IS}_3+\mathrm{IC}_2$ and the $\mathrm{IS}_4$-type terms) with no good term to be absorbed into. We do not know whether this restriction is an artifact of our method or whether it reflects a genuine obstruction at $\lambda=\frac12$. A natural question for future work is whether a different choice of entropy, or a more refined splitting of the correction term, could close the estimate down to $\lambda=\frac12$, or even establish non-existence (or loss of regularity) at the classical Stratonovich value.

\item \emph{Dependence on the number of species $n$.} Assumption~\ref{Assumption:A4_correction_factor} becomes more restrictive as $n$ grows: for fixed noise coefficients $(e_k^{il})$, the smallness condition on $1/N$ required for our existence theorem scales unfavorably with $n$ (Remark following Assumption~\ref{Assumption:A4_correction_factor}). It would be of interest to determine whether this $n$-dependence is sharp, or whether a more careful bookkeeping of the cross-diffusion terms in the entropy estimate could yield a smallness condition uniform in $n$, which would be more faithful to the many-species regime the model is intended to describe.

\item \emph{Rigorous derivation of the noise from the particle system.} As discussed in Remark~\ref{rem:particle_motivation}, the noise coefficient \eqref{eq:SKT_intro_diffusion_sigma} and the correction term $\mathcal T$ are motivated by, but not rigorously derived from, the fluctuations of the underlying moderately interacting particle system of \cite{chen_holzinger_2021_rigorous_SKT}. Completing this derivation -- identifying the fluctuation martingale as a stochastic integral against a colored Wiener process on a suitable weighted Sobolev scale, and controlling the mean-field convergence rate -- is the subject of work in preparation, and would place the present existence theory on a fully rigorous footing relative to the underlying particle system.

\item \emph{Uniqueness.} We do not address uniqueness of solutions, in law or pathwise. Uniqueness for cross-diffusion systems of Shigesada--Kawasaki--Teramoto type is notoriously delicate and remains largely open already in the deterministic setting, where the absence of a maximum principle or comparison structure obstructs the standard arguments. Relative-entropy (weak--strong) techniques are available in principle, but the compatibility they demand between the noise and the entropy structure is considerably more restrictive than the componentwise fluctuation correction~\eqref{eq:SKT_intro_diffusion_sigma} studied here, and does not accommodate it. A more promising route, which we are currently investigating, is a kinetic formulation in the spirit of Fehrman and Gess~\cite{fehrman2023non,fehrman2024well}, adapted to degenerate systems of equations of this kind; we are, however, not in a position to give a definitive answer at this stage. Higher regularity of the solution beyond what the existence proof requires is likewise not pursued here.

\item \emph{Other cross-diffusion systems.} It is natural to ask whether the entropy-based regularization approach of this paper extends to other cross-diffusion systems possessing a comparable entropy structure. We are currently investigating this question for the Maxwell--Stefan cross-diffusion system. Unlike for the SKT system, the rigorous derivation of a stochastic correction from an underlying particle system is, to our knowledge, not available for the Maxwell--Stefan system, so that at present only a heuristic noise term can be formulated; moreover, the volume-filling structure typical of Maxwell--Stefan models is expected to introduce additional technical difficulties not present here.
\end{itemize}

%%%%%%%%%%%%%%%%%%%%%%%%%%%%%%%%%%%%%%%%%%%%%%%%%%%%%%%%%%%%%%%%%%%%%%%%%%%%%%%

\section{Appendix}
\subsection{Proofs of technical lemmata}\label{app:proofs_of_lemmata}
\begin{proof}[Proof of Lemma \ref{lem:is_correction_entropy_estimate}]\label{proof:is_correction_entropy_estimate}
\emph{Step 1: expanding the correction term.} Write $\sigma'(x):=\frac{\dd}{\dd x}\sigma(x)$ for $\sigma(x)=\sqrt x$, $x>0$, so that $\sigma'(x)=\frac1{2\sqrt x}$ and, by the chain rule,
\[
(\partial_{u_i}\sigma)(u^\eps)_{ii}=\sigma'(u_i^\eps\Ared_i(u^\eps))\big(\Ared_i(u^\eps)+a_{ii}u_i^\eps\big),\qquad
\partial_{x_l}(\sigma(u^\eps))_{ii}=\sigma'(u_i^\eps\Ared_i(u^\eps))\big(\Ared_i(u^\eps)\partial_{x_l}u_i^\eps+u_i^\eps\partial_{x_l}\Ared_i(u^\eps)\big).
\]
Substituting into \eqref{eqn:IS_correction}, and using that pairing with $R_\eps(v_i)=\log u_i^\eps$ contributes a factor $\partial_{x_l}R_\eps(v_i)=\frac{\partial_{x_l}u_i^\eps}{u_i^\eps}$ to both brackets (via integration by parts in $x_l$), together with $\sigma'(x)^2=\frac1{4x}$, $\sigma'(x)\sigma(x)=\frac12$,
\begin{align*}
\lambda\int_0^t\langle\mathcal T(u^\eps),R_\eps\rangle_{L^2(\dom)}\ds
&=-\lambda\sum_{i=1}^{n}\pi_i\int_0^t\int_\dom\sum_{k,l}\frac{\Ared_i(u^\eps)+a_{ii}u_i^\eps}{4u_i^\eps\Ared_i(u^\eps)}\big(\Ared_i(u^\eps)\partial_{x_l}u_i^\eps+u_i^\eps\partial_{x_l}\Ared_i(u^\eps)\big)\frac{\partial_{x_l}u_i^\eps}{u_i^\eps}(e_k^{il})^2\dx\ds\\
&\phantom{xx}-\frac\lambda2\sum_{i=1}^{n}\pi_i\int_0^t\int_\dom\sum_{k,l}\big(\Ared_i(u^\eps)+a_{ii}u_i^\eps\big)e_k^{il}\partial_{x_l}e_k^{il}\,\frac{\partial_{x_l}u_i^\eps}{u_i^\eps}\dx\ds.
\end{align*}
Splitting $\Ared_i(u^\eps)+a_{ii}u_i^\eps$ into its two summands in each line gives four terms,
\begin{align}
\mathrm{IS}_1 &:= -\frac\lambda4\sum_{i}\pi_{i}\int_{0}^{t}\int_{\dom}\sum_{k,l}\Ared_{i}(u^{\eps})\left(\frac{\partial_{x_{l}}u^{\eps}_{i}}{u^{\eps}_{i}}\right)^{2}\left(e^{il}_{k}\right)^{2}\dx \ds, \label{eqn:IS1_final}\\
\mathrm{IS}_2 &:= -\frac\lambda4\sum_{i=1}^{n}\pi_i a_{ii}\int_0^t\int_\dom\sum_{k,l}\frac{(\partial_{x_l}u_i^\eps)^2}{u_i^\eps}(e_k^{il})^2\dx\ds=-\lambda\sum_{i=1}^{n}\pi_i a_{ii}\int_0^t\int_\dom\sum_{k,l}\big(\partial_{x_l}\sqrt{u_i^\eps}\big)^2(e_k^{il})^2\dx\ds, \label{eqn:IS2_final}\\
\mathrm{IS}_3 &:= -\frac\lambda4\sum_{i}\pi_{i}\int_{0}^{t}\int_{\dom}\sum_{k,l}\frac{\Ared_{i}(u^{\eps})+a_{ii}u^{\eps}_{i}}{u^{\eps}_{i}\Ared_{i}(u^{\eps})}\partial_{x_{l}}\Ared_{i}(u^{\eps})\,\partial_{x_{l}}u^{\eps}_{i}\left(e^{il}_{k}\right)^{2}\dx \ds, \label{eqn:IS3_final}\\
\mathrm{IS}_4 &:= -\frac\lambda2\sum_{i}\pi_{i}\int_{0}^{t}\int_{\dom}\sum_{k,l}\left(\Ared_{i}(u^{\eps})+a_{ii}u^{\eps}_{i}\right)\partial_{x_{l}}e^{il}_{k}\,e^{il}_{k}\,\frac{\partial_{x_{l}}u^{\eps}_{i}}{u^{\eps}_{i}}\dx \ds, \label{eqn:IS4_final}
\end{align}
with $\lambda\int_0^t\langle\mathcal T(u^\eps),R_\eps\rangle_{L^2(\dom)}\ds=\mathrm{IS}_1+\mathrm{IS}_2+\mathrm{IS}_3+\mathrm{IS}_4$, using $\frac14\frac{(\partial_{x_l}u_i^\eps)^2}{u_i^\eps}=\big(\partial_{x_l}\sqrt{u_i^\eps}\big)^2$ in \eqref{eqn:IS2_final}.

\emph{Step 2: expanding the It\^o correction.} Write $A=A_1+A_2$, with $A_1=g_\delta'(u_i^\eps\Ared_i(u^\eps))\Ared_i(u^\eps)\partial_{x_l}u_i^\eps\,e_k^{il}$, $A_2=g_\delta'(u_i^\eps\Ared_i(u^\eps))u_i^\eps\partial_{x_l}\Ared_i(u^\eps)\,e_k^{il}$, and $B=g_\delta(u_i^\eps\Ared_i(u^\eps))\partial_{x_l}e_k^{il}$, so that $\operatorname{IC}_{i,k,l}=\pi_i(A+B)^2/u_i^\eps=\pi_i(A_1^2+A_2^2+B^2+2A_1A_2+2A_1B+2A_2B)/u_i^\eps$. Integrating,
\[
\frac12\int_0^t\int_\dom\sum_{i,k,l}\pi_i\operatorname{IC}_{i,k,l}\dx\ds=\operatorname{IC}_1+\operatorname{IC}_2+\operatorname{IC}_3+\operatorname{IC}_4+\operatorname{IC}_5+\operatorname{IC}_6,
\]
where
\begin{align}
\mathrm{IC}_1 &:= \frac{1}{2}\sum_{i}\pi_{i}\int_{0}^{t}\int_{\dom}\sum_{k,l}
g_{\delta}'(u^{\eps}_{i}\Ared_{i}(u^{\eps}))^{2}
\Ared_{i}(u^{\eps})^{2}u^{\eps}_{i}
\left(\frac{\partial_{x_{l}}u^{\eps}_{i}}{u^{\eps}_{i}}\right)^{2}\left(e^{il}_{k}\right)^{2} \dx\ds, \label{eqn:IC1_final}\\
\mathrm{IC}_2 &:= \sum_{i}\pi_{i}\int_{0}^{t}\int_{\dom}\sum_{k,l}
g_{\delta}'(u^{\eps}_{i}\Ared_{i}(u^{\eps}))^{2}
\Ared_{i}(u^{\eps})\,\partial_{x_{l}}u^{\eps}_{i}\cdot\partial_{x_{l}}\Ared_{i}(u^{\eps})\left(e^{il}_{k}\right)^{2}\dx\ds, \label{eqn:IC2_final}\\
\mathrm{IC}_3 &:= \frac{1}{2}\sum_{i}\pi_{i}\int_{0}^{t}\int_{\dom}\sum_{k,l}
g_{\delta}'(u^{\eps}_{i}\Ared_{i}(u^{\eps}))^{2}
u^{\eps}_{i}\left(\partial_{x_{l}}\Ared_{i}(u^{\eps})\right)^{2}\left(e^{il}_{k}\right)^{2} \dx\ds, \label{eqn:IC3_final}\\
\mathrm{IC}_4 &:= \sum_{i}\pi_{i}\int_{0}^{t}\int_{\dom}\sum_{k,l}
g_{\delta}'(u^{\eps}_{i}\Ared_{i}(u^{\eps}))\,g_{\delta}(u^{\eps}_{i}\Ared_{i}(u^{\eps}))
\Ared_{i}(u^{\eps})\frac{\partial_{x_{l}}u^{\eps}_{i}}{u^{\eps}_{i}}
e^{il}_{k}\partial_{x_{l}}e^{il}_{k}\dx\ds, \label{eqn:IC4_final}\\
\mathrm{IC}_5 &:= \sum_{i}\pi_{i}\int_{0}^{t}\int_{\dom}\sum_{k,l}
g_{\delta}'(u^{\eps}_{i}\Ared_{i}(u^{\eps}))\,g_{\delta}(u^{\eps}_{i}\Ared_{i}(u^{\eps}))
\partial_{x_{l}}\Ared_{i}(u^{\eps})\,e^{il}_{k}\partial_{x_{l}}e^{il}_{k}\dx\ds, \label{eqn:IC5_final}\\
\mathrm{IC}_6 &:= \frac{1}{2}\sum_{i}\pi_{i}\int_{0}^{t}\int_{\dom}\sum_{k,l}
g_{\delta}(u^{\eps}_{i}\Ared_{i}(u^{\eps}))^{2}
\left(\partial_{x_{l}}e^{il}_{k}\right)^{2}
\frac{1}{u^{\eps}_{i}}\dx\ds. \label{eqn:IC6_final}
\end{align}
Here $\mathrm{IC}_1,\mathrm{IC}_3,\mathrm{IC}_6\geq0$, being (up to the nonnegative weight $\pi_i/u_i^\eps$) the squares $A_1^2,A_2^2,B^2$; the cross terms $\mathrm{IC}_2,\mathrm{IC}_4,\mathrm{IC}_5$ carry no definite sign and are treated via Young's inequality below.

\emph{Step 3: pairing $\mathrm{IS}_1$ with $\mathrm{IC}_1$.} By \eqref{eqn:sqrt_approximation_properties2}, $g_\delta'(x)^2x<\frac14$ for every $x>0$, so
\[
\mathrm{IC}_1<\frac18\sum_{i,k,l}\pi_i\int_0^t\int_\dom\Ared_i(u^\eps)\Big(\frac{\partial_{x_l}u_i^\eps}{u_i^\eps}\Big)^2(e_k^{il})^2\dx\ds=-\frac1{2\lambda}\mathrm{IS}_1,
\]
whence
\begin{equation}\label{eqn:IS1_IC1_final2}
\mathrm{IS}_1+\mathrm{IC}_1<-\Big(\frac\lambda4-\frac18\Big)\sum_{i,k,l}\pi_i\int_0^t\int_\dom\Ared_i(u^\eps)\Big(\frac{\partial_{x_l}u_i^\eps}{u_i^\eps}\Big)^2(e_k^{il})^2\dx\ds,
\end{equation}
which is negative precisely when $\lambda>\frac12$.

\emph{Step 4: $\mathrm{IS}_2$.} By \eqref{eqn:IS2_final}, $\mathrm{IS}_2$ is already sign-definite negative and matches the corresponding term in the statement of the lemma; no further estimate is needed.

\emph{Step 5: pairing $\mathrm{IS}_3$ with $\mathrm{IC}_2$.} Since $0\leq a_{ii}u_i^\eps\leq\Ared_i(u^\eps)$, so that $\Ared_i(u^\eps)\leq\Ared_i(u^\eps)+a_{ii}u_i^\eps\leq2\Ared_i(u^\eps)$, and $g_\delta'(x)^2\leq\frac1{4x}$,
\begin{align*}
|\mathrm{IS}_3|&\leq\frac\lambda2\sum_{i=1}^{n}\pi_i\int_0^t\int_\dom\sum_{k,l}\Big|\frac{\partial_{x_l}u_i^\eps}{u_i^\eps}\Big|\,|\partial_{x_l}\Ared_i(u^\eps)|(e_k^{il})^2\dx\ds,\\
\mathrm{IC}_2&\leq\frac14\sum_{i=1}^{n}\pi_i\int_0^t\int_\dom\sum_{k,l}\Big|\frac{\partial_{x_l}u_i^\eps}{u_i^\eps}\Big|\,|\partial_{x_l}\Ared_i(u^\eps)|(e_k^{il})^2\dx\ds.
\end{align*}

Young's inequality $ab\leq\frac{\kappa}2a^2+\frac1{2\kappa}b^2$, applied with $a=\sqrt{\Ared_i(u^\eps)}\,\frac{\partial_{x_l}u_i^\eps}{u_i^\eps}$, $b=\frac{\partial_{x_l}\Ared_i(u^\eps)}{\sqrt{\Ared_i(u^\eps)}}$ (each multiplied by $|e_k^{il}|$), then gives
\begin{equation}\label{eqn:IS3_IC2_final2}
\mathrm{IS}_3+\mathrm{IC}_2\leq\Big(\frac\lambda2+\frac14\Big)\Big[\frac{\kappa}2\sum_{i,k,l}\pi_i\int_0^t\int_\dom\Ared_i(u^\eps)\Big(\frac{\partial_{x_l}u_i^\eps}{u_i^\eps}\Big)^2(e_k^{il})^2\dx\ds+\frac1{2\kappa}\sum_{i,k,l}\pi_i\int_0^t\int_\dom\frac{(\partial_{x_l}\Ared_i(u^\eps))^2}{\Ared_i(u^\eps)}(e_k^{il})^2\dx\ds\Big].
\end{equation}

\emph{Step 6: splitting and pairing $\mathrm{IS}_4$ with $\mathrm{IC}_4$ and $\mathrm{IC}_5$.} Write $\mathrm{IS}_4=\mathrm{IS}_{4a}+\mathrm{IS}_{4b}$, splitting $\Ared_i(u^\eps)+a_{ii}u_i^\eps$ in \eqref{eqn:IS4_final} into its two summands:
\[
\mathrm{IS}_{4a}:=-\frac\lambda2\sum_{i=1}^{n}\pi_i\int_0^t\int_\dom\sum_{k,l}\Ared_i(u^\eps)\,\partial_{x_l}e_k^{il}\,e_k^{il}\,\frac{\partial_{x_l}u_i^\eps}{u_i^\eps}\dx\ds,\qquad
\mathrm{IS}_{4b}:=-\frac\lambda2\sum_{i=1}^{n}\pi_ia_{ii}\int_0^t\int_\dom\sum_{k,l}\partial_{x_l}u_i^\eps\,e_k^{il}\,\partial_{x_l}e_k^{il}\dx\ds.
\]
By \eqref{eqn:sqrt_approximation_properties2}, $g_\delta'(x)g_\delta(x)<\frac12$, so
\[
|\mathrm{IS}_{4a}|+\mathrm{IC}_4\leq\Big(\frac\lambda2+\frac12\Big)\sum_{i=1}^{n}\pi_i\int_0^t\int_\dom\sum_{k,l}\Ared_i(u^\eps)\Big|\frac{\partial_{x_l}u_i^\eps}{u_i^\eps}\Big|\,|e_k^{il}\partial_{x_l}e_k^{il}|\dx\ds,
\]
and Young's inequality with parameter $\kappa$ gives
\begin{equation}\label{eqn:IS4a_IC4_final2}
\mathrm{IS}_{4a}+\mathrm{IC}_4\leq\frac{\lambda+1}2\Big[\frac{\kappa}2\sum_{i,k,l}\pi_i\int_0^t\int_\dom\Ared_i(u^\eps)\Big(\frac{\partial_{x_l}u_i^\eps}{u_i^\eps}\Big)^2(e_k^{il})^2\dx\ds+\frac1{2\kappa}\sum_{i,k,l}\pi_i\int_0^t\int_\dom\Ared_i(u^\eps)(\partial_{x_l}e_k^{il})^2\dx\ds\Big].
\end{equation}
Since $\Ared_i(u^\eps)=a_{i0}+\sum_ka_{ik}u_k^\eps$ is affine in $u^\eps$, the elementary bound $x\leq C_a(1+\sum_jx_j\log x_j-x_j)$, $x\geq0$, used throughout this paper gives
\begin{equation}\label{eqn:Ared_entropy_domination_final}
\sum_{i=1}^{n}\pi_i\int_0^t\int_\dom\Ared_i(u^\eps)(\partial_{x_l}e_k^{il})^2\dx\ds\leq C_a\|\partial_{x_l}e_k^{il}\|_{L^\infty}^2\sum_{i=1}^{n}\pi_i\int_0^t\int_\dom\big(u_i^\eps\log u_i^\eps-u_i^\eps+2\big)\dx\ds,
\end{equation}
for a constant $C_a=C_a(a_{i0},\ldots,a_{in})>0$. By Young's inequality with parameter $\kappa$,
\begin{equation}\label{eqn:IS4b_final2}
\mathrm{IS}_{4b}\leq\frac\lambda2\sum_{i=1}^{n}\pi_ia_{ii}\int_0^t\int_\dom\sum_{k,l}\Big[\frac{\kappa}2(\partial_{x_l}u_i^\eps)^2(e_k^{il})^2+\frac1{2\kappa}(\partial_{x_l}e_k^{il})^2\Big]\dx\ds,
\end{equation}
and, again by \eqref{eqn:sqrt_approximation_properties2} and Young's inequality with parameter $\widetilde\kappa_3\in(0,\tfrac12]$,
\begin{equation}\label{eqn:IC5_final_bound2}
\mathrm{IC}_5\leq\frac12\sum_{i=1}^{n}\pi_i\int_0^t\int_\dom\sum_{k,l}\Big[\frac{\widetilde\kappa_3}2(\partial_{x_l}\Ared_i(u^\eps))^2(e_k^{il})^2+\frac1{2\widetilde\kappa_3}(\partial_{x_l}e_k^{il})^2\Big]\dx\ds.
\end{equation}

\emph{Step 7: $\mathrm{IC}_3$ and $\mathrm{IC}_6$.} By $g_\delta'(x)^2\leq\frac1{4x}$,
\begin{equation}\label{eqn:IC3_final_bound2}
\mathrm{IC}_3\leq\frac18\sum_{i,k,l}\pi_i\int_0^t\int_\dom\frac{(\partial_{x_l}\Ared_i(u^\eps))^2}{\Ared_i(u^\eps)}(e_k^{il})^2\dx\ds,
\end{equation}
and, by $g_\delta(x)\leq\sqrt x$ (\eqref{eqn:sqrt_approximation_properties}) together with \eqref{eqn:Ared_entropy_domination_final},
\begin{equation}\label{eqn:IC6_final_bound2}
\mathrm{IC}_6\leq\frac12\sum_{i=1}^{n}\pi_i\int_0^t\int_\dom\sum_{k,l}\Ared_i(u^\eps)(\partial_{x_l}e_k^{il})^2\dx\ds\leq\frac{C_a}2\sum_{i,k,l}\|\partial_{x_l}e_k^{il}\|^2_{L^\infty}\pi_i\int_0^t\int_\dom\big(u_i^\eps\log u_i^\eps-u_i^\eps+2\big)\dx\ds.
\end{equation}

\emph{Step 8: conclusion.} Summing \eqref{eqn:IS1_IC1_final2}, $\mathrm{IS}_2$, \eqref{eqn:IS3_IC2_final2}, \eqref{eqn:IS4a_IC4_final2}--\eqref{eqn:Ared_entropy_domination_final}, \eqref{eqn:IS4b_final2}, \eqref{eqn:IC5_final_bound2}, \eqref{eqn:IC3_final_bound2}, and \eqref{eqn:IC6_final_bound2}, gives the bound stated in the lemma, with leading coefficient
\[
\frac\lambda4-\frac18-\Big(\frac\lambda2+\frac14\Big)\frac\kappa2-\frac{\lambda+1}2\cdot\frac\kappa2=\frac\lambda4-\frac18-\frac{\lambda\kappa}2-\frac{3\kappa}8,
\]
positive whenever $\lambda>\frac12$ and $\kappa<\frac{2\lambda-1}{4\lambda+3}$; coefficient $\big(\frac\lambda2+\frac14\big)\frac1{2\kappa}+\frac18=\frac{2\lambda+1}{8\kappa}+\frac18$ on $(\partial_{x_l}\Ared_i(u^\eps))^2/\Ared_i(u^\eps)$; coefficient $\frac{\widetilde\kappa_3}4$ on $(\partial_{x_l}\Ared_i(u^\eps))^2$; coefficient $\frac{\lambda\kappa}4$ on $a_{ii}(\partial_{x_l}u_i^\eps)^2$; coefficient $\frac{\lambda a_{ii}}{4\kappa}+\frac1{4\widetilde\kappa_3}$ on $\|\partial_{x_l}e_k^{il}\|^2_{L^2}$; and coefficient $C_a\big(\frac{\lambda+1}{4\kappa}+\frac12\big)$ on the entropy term (combining \eqref{eqn:Ared_entropy_domination_final} and \eqref{eqn:IC6_final_bound2}).
\end{proof}

\begin{remark}[On the $n$-dependence and the scope of the estimate]\label{rem:standalone_n_and_scope}
The term $\sum_{i,k,l}\pi_i\int_0^t\int_\dom(\partial_{x_l}\Ared_i(u^\eps))^2(e_k^{il})^2\dx\ds$ produced by $\mathrm{IC}_5$ in \eqref{eqn:IC5_final_bound2} is the only place in this proof where the number of species $n$ enters explicitly. Writing $\partial_{x_l}\Ared_i(u^\eps)=\sum_{j=1}^na_{ij}\partial_{x_l}u_j^\eps$ and applying Cauchy--Schwarz,
\[
\big(\partial_{x_l}\Ared_i(u^\eps)\big)^2\leq\Big(\sum_{j=1}^na_{ij}\Big)\Big(\sup_{1\leq m\leq n}a_{im}\Big)\sum_{j=1}^n(\partial_{x_l}u_j^\eps)^2,
\]
which becomes a genuine multiple of $n$ under the additional uniform bound $a_{ij}\leq\bar a$ ($1\leq i,j\leq n$). We leave \eqref{eqn:IC5_final_bound2} in the form above, rather than substitute this bound, so that either this or a sharper species-dependent estimate can be used as needed; using it in Proposition~\ref{prop.ent} requires Assumption~\ref{Assumption:A4_correction_factor} to control the corresponding term.
\end{remark}

\begin{proof}\label{proof:is_linear_growth_lipschitz} (of Lemma \ref{lem:is_linear_growth_lipschitz})
Both estimates reduce to the same elementary fact: every term appearing in $\mathcal T(v)_i$ is, after expanding the derivatives of $\sigma_\delta$, a product of $u^\eps_i$, $\Ared_i(u^\eps)$, and their spatial derivatives, multiplied by $(e_k^{il})^2$ or $e_k^{il}\partial_{x_l}e_k^{il}$. We first bound the $D(L)'$-norm of $\mathcal T(v)_i$ by the $L^1(\dom)$-norm of this expansion (linear growth), and then, for the Lipschitz estimate, repeat the same expansion for the difference $\mathcal T(v^1)_i-\mathcal T(v^2)_i$, regrouping it into differences of $u^\eps_i$, $\Ared_i(u^\eps)$, and their derivatives, each of which is controlled by $\|v^1-v^2\|_{D(L)'}$ via the Lipschitz continuity of $R_\eps$ established in Lemma~\ref{lem.Reps}.

Throughout, $\eps>0$ and $\delta>0$ are fixed, and two observations keep every quantity below finite and non-singular. First, since $R_\eps(v)\in D(L)\hookrightarrow W^{1,\infty}(\dom)$, the entropy variable $w^\eps:=R_\eps(v)$ is bounded, so $u^\eps=\exp(w^\eps/\pi)$ is bounded above and below away from $0$; hence $1/u^\eps$, $\log u^\eps$ and $\nabla\log u^\eps=\pi^{-1}\nabla w^\eps$ lie in $L^\infty(\dom)$, with norms controlled by $\|R_\eps(v)\|_{D(L)}\le C\eps^{-1}(1+\|v\|_{D(L)'})$ (Lemma~\ref{lem.Reps}); in particular the factors $1/u^\eps$ and $1/\sqrt{u^\eps_i\Ared_i(u^\eps)}$ occurring below are bounded. Second, the sum over the noise modes $k$ converges by Assumption~\ref{Assumption:A5_noise_onb}: each summand carries a factor $\|e^{il}_k\|_{L^\infty}^2$ or $\|e^{il}_k\|_{L^\infty}\|\partial_{x_l}e^{il}_k\|_{L^\infty}$, and $\sum_k\|e^{il}_k\|_{L^\infty}^2<\infty$, while by Cauchy--Schwarz $\sum_k\|e^{il}_k\|_{L^\infty}\|\partial_{x_l}e^{il}_k\|_{L^\infty}\le\big(\sum_k\|e^{il}_k\|_{L^\infty}^2\big)^{1/2}\big(\sum_k\|\partial_{x_l}e^{il}_k\|_{L^\infty}^2\big)^{1/2}<\infty$. To lighten the notation we bound a generic mode $k$ and leave the (convergent) $\sum_{k=1}^\infty$ implicit in the displays that follow.

We begin with the linear growth estimate, using that $D(L)\hookrightarrow W^{1,\infty}(\dom)$, by the definition of the dual-norm,
    \begin{align*}
  \left \| \mathcal{T}(v)_{i}\right\|_{D(L)^{\prime}}&\leq\left\|\sum_{k=1}^{\infty}\sum_{l=1}^{d}\partial_{x_{l}}\left(\left(\partial_{u_{i}}\sigma_{\delta}\right)\left(u^{\eps}\right)_{ii}\left(\partial_{x_{l}}\left(\sigma_{\delta}(u^{\eps})\right)_{ii}e^{il}_{k}\right)e^{il}_{k}+\left(\partial_{u_{i}}\sigma_{\delta}\right)\left(u^{\eps}\right)_{ii}\left(\partial_{x_{l}}e^{il}_{k}\left(\sigma_{\delta}(u^{\eps})\right)_{ii}\right)e^{il}_{k}\right)\right\|_{D(L)^{\prime}}\\
    &\leq C\left\|\sum_{k=1}^{\infty}\sum_{l=1}^{d}\left(\partial_{u_{i}}\sigma_{\delta}\right)\left(u^{\eps}\right)_{ii}\left(\partial_{x_{l}}\left(\sigma_{\delta}(u^{\eps})\right)_{ii}e^{il}_{k}\right)e^{il}_{k}+\left(\partial_{u_{i}}\sigma_{\delta}\right)\left(u^{\eps}\right)_{ii}\left(\partial_{x_{l}}e^{il}_{k}\left(\sigma_{\delta}(u^{\eps})\right)_{ii}\right)e^{il}_{k}\right\|_{L^{1}(\dom)},
\end{align*}
where $C$ only depends on the domain $\dom$. Hence,
\begin{align*}
        \left \| \mathcal{T}(v)_{i}\right\|_{D(L)^{\prime}}&\leq C \left\|\sum_{k=1}^{\infty}\sum_{l=1}^{d}\left(\partial_{u_{i}}\sigma\right)\left(u^{\eps}\right)_{ii}\left(\partial_{x_{l}}\left(\sigma(u^{\eps})\right)_{ii}e^{il}_{k}\right)e^{il}_{k}\right\|_{L^{1}(\dom)}\\
         &\phantom{xx}+ C\left\|\sum_{k=1}^{\infty}\sum_{l=1}^{d}\left(\partial_{u_{i}}\sigma\right)\left(u^{\eps}\right)_{ii}\left(\partial_{x_{l}}e^{il}_{k}\left(\sigma(u^{\eps})\right)_{ii}\right)e^{il}_{k}\right\|_{L^{1}(\dom)}\\
    &\leq C \left\|\sum_{k=1}^{\infty}\sum_{l=1}^{d}\frac{1}{u^{\eps}_{i}\Ared_{i}(u^{\eps})}\left(\Ared_{i}(u^{\eps})+a_{ii}u^{\eps}_{i}\right)\left(\Ared_{i}(u^{\eps}) \partial_{x_{l}}u^{\eps}_{i}+u^{\eps}_{i}\partial_{x_{l}}\Ared_{i}(u^{\eps})\right)\left(e^{il}_{k}\right)^{2}\right\|_{L^{1}(\dom)}\\
         &\phantom{xx}+ C\left\|\sum_{k=1}^{\infty}\sum_{l=1}^{d}\left(\Ared_{i}(u^{\eps})+a_{ii}u^{\eps}_{i}\right)e^{il}_{k}\partial_{x_{l}}e^{il}_{k}\right\|_{L^{1}(\dom)}\\
         &\leq C \left\|\sum_{k=1}^{\infty}\sum_{l=1}^{d}\Ared_{i}(u^{\eps})\partial_{x_{l}}\log(u^{\eps}_{i})\left(e^{il}_{k}\right)^{2}\right\|_{L^{1}(\dom)}\\
         &\phantom{xx}+C \sum_{k=1}^{\infty}\sum_{l=1}^{d}\left\|\partial_{x_{l}}\Ared_{i}(u^{\eps})\left(e^{il}_{k}\right)^{2}\right\|_{L^{1}(\dom)}+C \sum_{k=1}^{\infty}\sum_{l=1}^{d}\left\| \partial_{x_{l}}u^{\eps}_{i}\left(e^{il}_{k}\right)^{2}\right\|_{L^{1}(\dom)}\\
          &\phantom{xx}+ C\sum_{k=1}^{\infty}\sum_{l=1}^{d}\left\|\Ared_{i}(u^{\eps})e^{il}_{k}\partial_{x_{l}}e^{il}_{k}\right\|_{L^{1}(\dom)} + C\sum_{k=1}^{\infty}\sum_{l=1}^{d}\left\|u^{\eps}_{i}e^{il}_{k}\partial_{x_{l}}e^{il}_{k}\right\|_{L^{1}(\dom)}\\
         &\leq C \left\|\sum_{k=1}^{\infty}\sum_{l=1}^{d}\Ared_{i}(u^{\eps})\partial_{x_{l}}\log(u^{\eps}_{i})\left(e^{il}_{k}\right)^{2}\right\|_{L^{1}(\dom)}\\
            &\phantom{xx}+C \sum_{k=1}^{\infty}\sum_{l=1}^{d}\left\|\Ared_{i}(u^{\eps})u^{\eps}_{i}\left(e^{il}_{k}\right)^{2}\right\|_{L^{1}(\dom)}+C \sum_{k=1}^{\infty}\sum_{l=1}^{d}\left\|\partial_{x_{l}}\Ared_{i}(u^{\eps})\left(e^{il}_{k}\right)^{2}\right\|_{L^{1}(\dom)}\\
         &\phantom{xx}+C \sum_{k=1}^{\infty}\sum_{l=1}^{d}\left\|\partial_{x_{l}}u^{\eps}_{i}\left(e^{il}_{k}\right)^{2}\right\|_{L^{1}(\dom)}+C \sum_{k=1}^{\infty}\sum_{l=1}^{d}\left\|\partial_{x_{l}}\Ared_{i}(u^{\eps})\left(e^{il}_{k}\right)^{2}\right\|_{L^{1}(\dom)}\\
                &\phantom{xx}+ C\sum_{k=1}^{\infty}\sum_{l=1}^{d}\left\|u^{\eps}_{i}e^{il}_{k}\partial_{x_{l}}e^{il}_{k}\right\|_{L^{1}(\dom)}.
\end{align*}
Note that $C$ only depends on the domain $\dom$. 
We note that
\begin{align*}
    &\|\sqrt{u^{\eps}_{i}\Ared_{i}(u^{\eps})}e^{il}_{k}\|_{L^{2}}\|\sqrt{u^{\eps}_{i}\Ared_{i}(u^{\eps})}\|_{L^{2}}\leq \|e^{il}_{k}\|_{L^{\infty}}\|\sqrt{u^{\eps}_{i}\Ared_{i}(u^{\eps})}\|_{L^{2}}^{2}\\
    &\leq \|e^{il}_{k}\|_{L^{\infty}}\int_{\dom}\left|u^{\eps}_{i}\Ared_{i}(u^{\eps})\right|\dx\leq C \|e^{il}_{k}\|_{L^{\infty}}\sum_{j=1}^{n}\int_{\dom}\left|u^{\eps}_{j}\right|^{2}\dx.
\end{align*}
The resulting term can be treated as in the proof of \cite[Theorem 13]{braukhoff2024global}(see also \cite[Theorem 2.6.6]{huber2022stochastic}), by
\begin{align*}
    \left\|u^{\eps}_{i}\right\|_{L^{2}}\leq C\|u^{\prime}\|_{L^{\infty}}\|R_{\eps}(v^{\eps})-R_{\eps}(0)\|_{D(L)}+\|u(R_{\eps}(0))\|_{L^{2}}\leq \frac{C}{\eps}\left(1+\|u^{\prime}\|_{L^{\infty}}\|v^{\eps}\|_{D(L)^{\prime}}\right).
\end{align*}
The term including the spatial derivative is handled similarly but requires additional care. By the embedding $D(L)\hookrightarrow W^{1,2}(\dom)$,
\begin{align*}
    \left\|\partial_{x_{l}}u^{\eps}_{i}\right\|_{L^{2}}\leq \|u^{\prime}\|_{L^{\infty}}\|\partial_{x_{l}}R_{\eps}(v^{\eps})\|_{L^{2}(\dom)}\leq \frac{C}{\eps}\left(1+\|u^{\prime}\|_{L^{\infty}}\|v^{\eps}\|_{D(L)^{\prime}}\right).
\end{align*}
Every remaining term, except for the first, can be bound similarly. For the first term, we use that $\log(u^{\eps}_{i})=w^{\eps}_{i}=R_{\eps}(v^{\eps}_{i})$ and \cite[Lemma 9]{braukhoff2024global}(see also \cite[Lemma 2.5.12]{huber2022stochastic}) to obtain an analogous bound.

To verify the Lipschitz continuity, we slightly alter the notation in these estimates to accommodate for the fact that we need two vector-valued functions $u^{1,\eps}=u^{\eps}\left(R_{\eps}(v^{1})\right), u^{2,\eps}=u^{\eps}\left(R_{\eps}(v^{2})\right)$. 
\begin{align*}
    &\|\mathcal{T}(v^{1})_{i}-\mathcal{T}(v^{2})_{i}\|_{D(L)^{\prime}}\\
    &\leq \sum_{i=1}^{n}\sum_{l=1}^{d}\left\|\partial_{x_{l}}\left(\left(\left(\frac{\Ared_{i}(u^{1,\eps})+a_{ii}u^{1,\eps}_{i}}{\sqrt{u^{1,\eps}_{i}\Ared_{i}(u^{1,\eps})}}\right)\left(\partial_{x_{l}}\left(\sigma_{\delta}(u^{1,\eps})\right)_{ii}e^{il}_{k}\right)\right.\right.\right.\\
    &\phantom{xxxxxxxxxxxxxxxx}\left.\left.\left.-\left(\frac{\Ared_{i}(u^{2,\eps})+a_{ii}u^{2,\eps}_{i}}{\sqrt{u^{2,\eps}_{i}\Ared_{i}(u^{2,\eps})}}\right)\left(\partial_{x_{l}}\left(\sigma_{\delta}(u^{2,\eps})\right)_{ii}e^{il}_{k}\right)\right)e^{il}_{k}\right)\right\|_{D(L)^{\prime}}\\
    &\phantom{xx}+\sum_{i=1}^{n}\sum_{l=1}^{d}\left\|\partial_{x_{l}}\left(\left(\partial_{u_{i}}\sigma_{\delta}\right)\left(u^{1,\eps}\right)_{ii}\left(\partial_{x_{l}}e^{il}_{k}\left(\sigma_{\delta}(u^{1,\eps})\right)_{ii}\right)-\left(\partial_{u_{i}}\sigma_{\delta}\right)\left(u^{2,\eps}\right)_{ii}\left(\partial_{x_{l}}e^{il}_{k}\left(\sigma_{\delta}(u^{2,\eps})\right)_{ii}\right)\right)e^{il}_{k}\right)\|_{D(L)^{\prime}}\\
\end{align*}
We use the embedding $D(L)'\hookrightarrow L^1(\dom)$ (dual to $D(L)\hookrightarrow W^{1,\infty}(\dom)$) to drop the outer $\partial_{x_l}$ and pass to $L^1(\dom)$-norms, exactly as in the linear growth estimate above:
\begin{align*}
   &\leq \sum_{i=1}^{n}\sum_{l=1}^{d}\left\|\left(\left(\frac{\left(\Ared_{i}(u^{1,\eps})+a_{ii}u^{1,\eps}_{i}\right)\left(\Ared_{i}(u^{1,\eps}) \partial_{x_{l}}u^{1,\eps}_{i}+u^{1,\eps}_{i}\partial_{x_{l}}\Ared_{i}(u^{1,\eps})\right)}{u^{1,\eps}_{i}\Ared_{i}(u^{1,\eps})}\right)\right.\right.\\
   &\phantom{xxxxxxxxxxxxxxxx}\left.\left.-\left(\frac{\left(\Ared_{i}(u^{2,\eps})+a_{ii}u^{2,\eps}_{i}\right)\left(\Ared_{i}(u^{2,\eps}) \partial_{x_{l}}u^{2,\eps}_{i}+u^{2,\eps}_{i}\partial_{x_{l}}\Ared_{i}(u^{2,\eps})\right)}{u^{2,\eps}_{i}\Ared_{i}(u^{2,\eps})}\right)\right)\left(e^{il}_{k}\right)^{2}\right\|_{L^{1}}\\
   &\phantom{xx}+ \sum_{i=1}^{n}\sum_{l=1}^{d}\left\|\left(\left(\partial_{u_{i}}\sigma_{\delta}\right)\left(u^{1,\eps}\right)_{ii}-\left(\partial_{u_{i}}\sigma_{\delta}\right)\left(u^{2,\eps}\right)_{ii}\right)\left(\partial_{x_{l}}\left(\sigma_{\delta}(u^{2,\eps})\right)_{ii}\right)e^{il}_{k}\right\|_{L^{1}}\\
    &\phantom{xx}+\sum_{i=1}^{n}\sum_{l=1}^{d}\left\|\left(\partial_{u_{i}}\sigma_{\delta}\right)\left(u^{1,\eps}\right)_{ii}\left(\partial_{x_{l}}e^{il}_{k}\left(\sigma_{\delta}(u^{1,\eps})\right)_{ii}-\partial_{x_{l}}e^{il}_{k}\left(\sigma_{\delta}(u^{2,\eps})\right)_{ii}\right)e^{il}_{k}\right\|_{L^{1}}\\
    &\phantom{xx}+\sum_{i=1}^{n}\sum_{l=1}^{d}\left\|\left(\left(\partial_{u_{i}}\sigma_{\delta}\right)\left(u^{1,\eps}\right)_{ii}-\left(\partial_{u_{i}}\sigma_{\delta}\right)\left(u^{2,\eps}\right)_{ii}\right)\left(\partial_{x_{l}}e^{il}_{k}\left(\sigma_{\delta}(u^{2,\eps})\right)_{ii}\right)e^{il}_{k}\right\|_{L^{1}}\\
\end{align*}
Writing out $(\partial_{u_i}\sigma_\delta)(u^\eps)_{ii}$ explicitly (as in the linear growth estimate) turns every term above into a difference of products of $u^{1,\eps}_i,u^{2,\eps}_i,\Ared_i(u^{1,\eps}),\Ared_i(u^{2,\eps})$ and their derivatives:
\begin{align*}
   &\leq  \sum_{i=1}^{n}\sum_{l=1}^{d}\left\|\left(\left(\frac{\Ared_{i}(u^{1,\eps})\Ared_{i}(u^{1,\eps}) \partial_{x_{l}}u^{1,\eps}_{i}}{u^{1,\eps}_{i}\Ared_{i}(u^{1,\eps})}\right)-\left(\frac{\Ared_{i}(u^{2,\eps})\Ared_{i}(u^{2,\eps}) \partial_{x_{l}}u^{2,\eps}_{i}}{u^{2,\eps}_{i}\Ared_{i}(u^{2,\eps})}\right)\right)\left(e^{il}_{k}\right)^{2}\right\|_{L^{1}}\\
      &\phantom{xx}+ \sum_{i=1}^{n}\sum_{l=1}^{d}\left\|\left(\left(\frac{\Ared_{i}(u^{1,\eps})u^{1,\eps}_{i}\partial_{x_{l}}\Ared_{i}(u^{1,\eps})}{u^{1,\eps}_{i}\Ared_{i}(u^{1,\eps})}\right)-\left(\frac{\Ared_{i}(u^{2,\eps})u^{2,\eps}_{i}\partial_{x_{l}}\Ared_{i}(u^{2,\eps})}{u^{2,\eps}_{i}\Ared_{i}(u^{2,\eps})}\right)\right)\left(e^{il}_{k}\right)^{2}\right\|_{L^{1}}\\
        &\phantom{xx}+ \sum_{i=1}^{n}\sum_{l=1}^{d}\left\|\left(\left(\frac{a_{ii}u^{1,\eps}_{i}\Ared_{i}(u^{1,\eps}) \partial_{x_{l}}u^{1,\eps}_{i}}{u^{1,\eps}_{i}\Ared_{i}(u^{1,\eps})}\right)-\left(\frac{a_{ii}u^{2,\eps}_{i}\Ared_{i}(u^{2,\eps}) \partial_{x_{l}}u^{2,\eps}_{i}}{u^{2,\eps}_{i}\Ared_{i}(u^{2,\eps})}\right)\right)\left(e^{il}_{k}\right)^{2}\right\|_{L^{1}}\\  
         &\phantom{xx}+\sum_{i=1}^{n}\sum_{l=1}^{d}\left\|\left(\left(\frac{a_{ii}u^{1,\eps}_{i}u^{1,\eps}_{i}\partial_{x_{l}}\Ared_{i}(u^{1,\eps})}{u^{1,\eps}_{i}\Ared_{i}(u^{1,\eps})}\right)-\left(\frac{a_{ii}u^{2,\eps}_{i}u^{2,\eps}_{i}\partial_{x_{l}}\Ared_{i}(u^{2,\eps})}{u^{2,\eps}_{i}\Ared_{i}(u^{2,\eps})}\right)\right)\left(e^{il}_{k}\right)^{2}\right\|_{L^{1}}\\
\end{align*}
The fractions simplify in the same way they did for the linear growth estimate -- $\Ared_i/u_i^\eps$ cancels against $u^{1,\eps}_i$ or $\Ared_i(u^{1,\eps})$ in each term -- and we are left with differences of $\log(u_i^\eps)$, $\Ared_i(u^\eps)$, and $u_i^\eps$ and their spatial derivatives:
\begin{align*}
   &\leq  \sum_{i=1}^{n}\sum_{l=1}^{d}\left\|\left(\Ared_{i}(u^{1,\eps}) \partial_{x_{l}}\log(u^{1,\eps}_{i})-\Ared_{i}(u^{2,\eps}) \partial_{x_{l}}\log(u^{2,\eps}_{i})\right)\left(e^{il}_{k}\right)^{2}\right\|_{L^{1}}\\
      &\phantom{xx}+ \sum_{i=1}^{n}\sum_{l=1}^{d}\left\|\left(\partial_{x_{l}}\Ared_{i}(u^{1,\eps})-\partial_{x_{l}}\Ared_{i}(u^{2,\eps})\right)\left(e^{il}_{k}\right)^{2}\right\|_{L^{1}}\\
        &\phantom{xx}+ \sum_{i=1}^{n}\sum_{l=1}^{d}\left\|\left(a_{ii}\partial_{x_{l}}u^{1,\eps}_{i}-a_{ii} \partial_{x_{l}}u^{2,\eps}_{i}\right)\left(e^{il}_{k}\right)^{2}\right\|_{L^{1}}\\  
         &\phantom{xx}+\sum_{i=1}^{n}\sum_{l=1}^{d}\left\|\left(\left(\frac{a_{ii}u^{1,\eps}_{i}u^{1,\eps}_{i}\partial_{x_{l}}\Ared_{i}(u^{1,\eps})}{u^{1,\eps}_{i}\Ared_{i}(u^{1,\eps})}\right)-\left(\frac{a_{ii}u^{2,\eps}_{i}u^{2,\eps}_{i}\partial_{x_{l}}\Ared_{i}(u^{2,\eps})}{u^{2,\eps}_{i}\Ared_{i}(u^{2,\eps})}\right)\right)\left(e^{il}_{k}\right)^{2}\right\|_{L^{1}}\\
\end{align*}
The first term is the only one requiring care, since $\log(u_i^\eps)=w_i^\eps$ is itself unbounded; we treat it by adding and subtracting $\Ared_i(u^{1,\eps})\partial_{x_l}\log(u_i^{2,\eps})$, splitting it into a difference of the (Lipschitz) logarithms and a difference of $\Ared_i$:
\begin{align*}
      &\leq  \sum_{i=1}^{n}\sum_{l=1}^{d}\left\|e^{il}_{k}\right\|_{L^{\infty}}^{2}\left\|\Ared_{i}(u^{1,\eps}) -\Ared_{i}(u^{2,\eps}) \right\|_{L^{2}}\left\|w^{1}-w^{2}\right\|_{D(L)}\\
      &\phantom{xx}+ \sum_{i=1}^{n}\sum_{l=1}^{d}\left\|e^{il}_{k}\right\|_{L^{\infty}}^{2}\left\|u^{\prime}\right\|_{L^{\infty}}\left\|w^{1}-w^{2}\right\|_{D(L)}\\
          &\phantom{xx}+C \sum_{i=1}^{n}\sum_{l=1}^{d}\left\|e^{il}_{k}\right\|_{L^{\infty}}^{2}\left\|u^{\prime}\right\|_{L^{\infty}}\left\|w^{1}-w^{2}\right\|_{D(L)}\\ 
         &\phantom{xx}+\sum_{i=1}^{n}\sum_{l=1}^{d}\left\|e^{il}_{k}\right\|_{L^{\infty}}^{2}\left\|\left(\frac{u^{1,\eps}_{i}}{\Ared_{i}(u^{1,\eps})}-\frac{u^{2,\eps}_{i}}{\Ared_{i}(u^{2,\eps})}\right)\partial_{x_{l}}\Ared_{i}(u^{1,\eps})+\frac{u^{2,\eps}_{i}}{\Ared_{i}(u^{2,\eps})}\left(\partial_{x_{l}}\Ared_{i}(u^{1,\eps})-\partial_{x_{l}}\Ared_{i}(u^{2,\eps})\right)\right\|_{L^{1}}\\
\end{align*}
It remains to bound the last term, which we do by writing its first factor as a single fraction with common denominator $\Ared_i(u^{1,\eps})\Ared_i(u^{2,\eps})$ and adding and subtracting $\Ared_i(u^{1,\eps})u^{2,\eps}_i$ in the numerator:
\begin{align*}
      &\leq  \sum_{i=1}^{n}\sum_{l=1}^{d}\left\|e^{il}_{k}\right\|_{L^{\infty}}^{2}\left\|\Ared_{i}(u^{1,\eps}) -\Ared_{i}(u^{2,\eps}) \right\|_{L^{2}}\left\|w^{1}-w^{2}\right\|_{D(L)}\\
      &\phantom{xx}+C \sum_{i=1}^{n}\sum_{l=1}^{d}\left\|e^{il}_{k}\right\|_{L^{\infty}}^{2}\left\|u^{\prime}\right\|_{L^{\infty}}\left\|w^{1}-w^{2}\right\|_{D(L)}\\
&\phantom{xx}+\sum_{i=1}^{n}\sum_{l=1}^{d}\left\|e^{il}_{k}\right\|_{L^{\infty}}^{2}\left\|\left(\frac{u^{1,\eps}_{i}\Ared_{i}(u^{1,\eps})-u^{2,\eps}_{i}\Ared_{i}(u^{2,\eps})}{\Ared_{i}(u^{1,\eps})\Ared_{i}(u^{2,\eps})}\right)\partial_{x_{l}}\Ared_{i}(u^{1,\eps})+\frac{u^{2,\eps}_{i}}{\Ared_{i}(u^{2,\eps})}\left(\partial_{x_{l}}\Ared_{i}(u^{1,\eps})-\partial_{x_{l}}\Ared_{i}(u^{2,\eps})\right)\right\|_{L^{1}}\\
\end{align*}
\begin{align*}
               &\leq  \sum_{i=1}^{n}\sum_{l=1}^{d}\left\|e^{il}_{k}\right\|_{L^{\infty}}^{2}\left\|\Ared_{i}(u^{1,\eps}) -\Ared_{i}(u^{2,\eps}) \right\|_{L^{2}}\left\|w^{1}-w^{2}\right\|_{D(L)}\\
      &\phantom{xx}+C \sum_{i=1}^{n}\sum_{l=1}^{d}\left\|e^{il}_{k}\right\|_{L^{\infty}}^{2}\left\|u^{\prime}\right\|_{L^{\infty}}\left\|w^{1}-w^{2}\right\|_{D(L)}\\
&\phantom{xx}+\sum_{i=1}^{n}\sum_{l=1}^{d}\left\|e^{il}_{k}\right\|_{L^{\infty}}^{2}\left\|\left(\frac{\left(u^{1,\eps}_{i}-u^{2,\eps}_{i}\right)\Ared_{i}(u^{1,\eps})+\left(\Ared_{i}(u^{1,\eps})-\Ared_{i}(u^{2,\eps})\right)u^{2,\eps}_{i}}{\Ared_{i}(u^{1,\eps})\Ared_{i}(u^{2,\eps})}\right)\right\|_{L^{2}}\left\|\partial_{x_{l}}\Ared_{i}(u^{1,\eps})\right\|_{L^{2}}\\
&\phantom{xx}+\sum_{i=1}^{n}\sum_{l=1}^{d}\left\|e^{il}_{k}\right\|_{L^{\infty}}^{2}\left\|\partial_{x_{l}}\Ared_{i}(u^{1,\eps})-\partial_{x_{l}}\Ared_{i}(u^{2,\eps})\right\|_{L^{1}}\\
\end{align*}
Every remaining factor is now either a difference $u^{1,\eps}_i-u^{2,\eps}_i$, a difference $\Ared_i(u^{1,\eps})-\Ared_i(u^{2,\eps})$, or a difference of their spatial derivatives, each of which is controlled by $\|w^1-w^2\|_{D(L)}=\|R_\eps(v^1)-R_\eps(v^2)\|_{D(L)}$ via the Lipschitz continuity of $R_\eps$ (Lemma~\ref{lem.Reps}) and the embedding $D(L)\hookrightarrow W^{1,\infty}(\dom)$:
\begin{align*}
               &\leq  \sum_{i=1}^{n}\sum_{l=1}^{d}\left\|e^{il}_{k}\right\|_{L^{\infty}}^{2}\left\|\Ared_{i}(u^{1,\eps}) -\Ared_{i}(u^{2,\eps}) \right\|_{L^{2}}\left\|w^{1}-w^{2}\right\|_{D(L)}\\
      &\phantom{xx}+C \sum_{i=1}^{n}\sum_{l=1}^{d}\left\|e^{il}_{k}\right\|_{L^{\infty}}^{2}\left\|u^{\prime}\right\|_{L^{\infty}}\left\|w^{1}-w^{2}\right\|_{D(L)}\\
&\phantom{xx}+\sum_{i=1}^{n}\sum_{l=1}^{d}\left\|e^{il}_{k}\right\|_{L^{\infty}}^{2}\left(\left\|u^{1,\eps}_{i}-u^{2,\eps}_{i}\right\|_{L^{2}}+\left\|\Ared_{i}(u^{1,\eps})-\Ared_{i}(u^{2,\eps})\right\|_{L^{2}}\right)\left\|\partial_{x_{l}}\Ared_{i}(u^{1,\eps})\right\|_{L^{2}}\\
&\phantom{xx}+\sum_{i=1}^{n}\sum_{l=1}^{d}\left\|e^{il}_{k}\right\|_{L^{\infty}}^{2}\left\|\partial_{x_{l}}\Ared_{i}(u^{1,\eps})-\partial_{x_{l}}\Ared_{i}(u^{2,\eps})\right\|_{L^{1}}.
\end{align*}
As in the proof of \cite[Theorem 13]{braukhoff2024global}(see also \cite[Theorem 2.6.6]{huber2022stochastic}), we can bound all terms by a constant, depending on the $L^{\infty}$ norms of $u^{\prime}(R_{\eps}(v_{1})),u^{\prime}(R_{\eps}(v_{2})), u(R_{\eps}(v_{1})), u(R_{\eps}(v_{2}))$, $\sum_{k=1}^{\infty}\left\|e^{il}_{k}\right\|_{L^{\infty}}^{2}$ and the constant appearing in the embedding of $D(L)$ into $W^{1,2}(\dom)$ and the difference $\|R_{\eps}(v_{1})-R_{\eps}(v_{2})\|_{D(L)}\leq C(\eps)\|v_{1}-v_{2}\|_{D(L)^{\prime}}$.
\end{proof}
\subsection{Auxiliary Lemmata}

\begin{lemma}[{\cite[Lemma 14]{braukhoff2024global}; \cite[Lemma 2.7.3]{huber2022stochastic}}]\label{lem.ab}
Let $w\in D(L)$, $a=(a_{ij})\in L^1(\dom;\R^{n\times n})$, and 
$b=(b_{ij})\in D(L)^{n\times n}$ satisfying $\DD R_\eps[w](a) = b$.
Then
$$
	\int_\dom a:b\dx  
	\leq \int_\dom\operatorname{tr}[a^T u'(w)^{-1}a]\dx .
$$
\end{lemma}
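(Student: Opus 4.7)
The plan is to unpack the definition of $\DD R_\eps$ into an elliptic-type identity and then close the estimate via Cauchy--Schwarz in the inner product induced by $u'(w)$. By Lemma \ref{lem.Reps}, the operator $\DD R_\eps[\cdot]$ is the inverse of $u'(w) + \eps L^*L$, acting component-wise (column by column) on the matrix-valued input. Hence the hypothesis $\DD R_\eps[w](a) = b$ can be inverted to
\[
  a = u'(w)\,b + \eps\, L^*L\,b.
\]

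Next, I would pair this identity with $b$ in the Frobenius inner product and integrate over $\dom$, obtaining
\[
  \int_\dom a:b\dx = \int_\dom u'(w)\,b:b\dx + \eps\int_\dom |Lb|^2\dx.
\]
Since $L^*L$ is self-adjoint and positive, the last term is nonnegative, giving the one-sided bound $\int_\dom u'(w)\,b:b\dx \le \int_\dom a:b\dx$. Because $h$ is strictly convex, $u'(w) = h''(u(w))^{-1}$ is symmetric and positive definite, so at every point it defines a genuine inner product on $\R^n$. Applying Cauchy--Schwarz and Young's inequality pointwise, column by column, produces
\[
  a:b \le \tfrac{1}{2}\operatorname{tr}\!\big[a^T u'(w)^{-1} a\big] + \tfrac{1}{2}\, u'(w)\,b:b.
\]
Integrating and using the previous one-sided bound to absorb the $u'(w)\,b:b$ term into the left-hand side yields the asserted inequality $\int_\dom a:b\dx \le \int_\dom \operatorname{tr}[a^T u'(w)^{-1}a]\dx$.

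The main subtlety I anticipate is reconciling the matrix-valued set-up of the lemma with the fact that $\DD R_\eps$ is naturally defined on vector-valued elements of $D(L)'$. This is resolved by running the argument column by column and invoking the identity $\operatorname{tr}[a^T M a] = \sum_j (a^{(j)})^T M a^{(j)}$ with $M = u'(w)^{-1}$, which converts the column-wise Cauchy--Schwarz bound into the trace form on the right-hand side. No subtle regularity issue arises here because the Young inequality is applied pointwise before integrating, and the symmetry and positive-definiteness of $u'(w)$ make the inner-product structure available at every point of $\dom$.
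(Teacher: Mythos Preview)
Your argument is correct. The key steps---inverting $\DD R_\eps[w](a)=b$ to $a=u'(w)b+\eps L^*Lb$, pairing with $b$ to obtain $\int_\dom a:b\dx=\int_\dom u'(w)b:b\dx+\eps\|Lb\|_{L^2}^2$, dropping the nonnegative $\eps$-term, and then closing with the weighted Cauchy--Schwarz/Young inequality in the $u'(w)$-inner product---are exactly the natural route, and the absorption works because $\int_\dom a:b\dx$ is finite (since $a\in L^1$ and $b\in D(L)^{n\times n}\hookrightarrow L^\infty$). The present paper does not supply its own proof of this lemma; it merely cites \cite[Lemma~14]{braukhoff2024global} and \cite[Lemma~2.7.3]{huber2022stochastic}, so there is no in-paper argument to compare against. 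Your column-by-column reading of $\DD R_\eps[w]$ and the trace identity $\operatorname{tr}[a^T u'(w)^{-1}a]=\sum_j (a^{(j)})^T u'(w)^{-1}a^{(j)}$ correctly reconcile the matrix-valued statement with the vector-valued definition of $\DD R_\eps$.
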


\begin{lemma}[{\cite[Lemma 15]{braukhoff2024global}; \cite[Lemma 2.7.4]{huber2022stochastic}}]\label{lem.v0}
Let $v^0\in L^p(\Omega;L^1(\dom))$ for some $p\ge 1$ satisfy
$\E\int_\dom h(v^0)\dx \leq C$. Then 
$$
  \int_\dom h(u(R_\eps(v^0)))\dx  + \frac{\eps}{2}\|LR_\eps(v^0)\|_{L^2(\dom)}^2
	\leq \int_\dom h(v^0)\dx .
$$
\end{lemma}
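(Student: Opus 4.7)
The plan is to exploit the convexity of the entropy density $h$ together with the defining identity for $R_\eps$. Set $w^\eps := R_\eps(v^0) \in D(L)$. Since $R_\eps$ is the inverse of $Q_\eps$ by Lemma \ref{lem.Qeps}, by definition
\begin{align*}
  v^0 = Q_\eps(w^\eps) = u(w^\eps) + \eps L^*Lw^\eps \quad \text{in } D(L)'.
\end{align*}
Since $h:[0,\infty)^n \to [0,\infty)$ is convex with $h'(u(w)) = w$ (because $u=(h')^{-1}$ by Lemma \ref{lem.u}), and since both $v^0$ and $u(w^\eps)$ take values in $[0,\infty)^n$ a.e.\ (the latter by Lemma \ref{lem.u}, since $u_i(w)=\exp(w_i/\pi_i)>0$), the pointwise convexity inequality reads a.e.\ in $\dom$,
\begin{align*}
  h(v^0) \geq h(u(w^\eps)) + w^\eps \cdot (v^0 - u(w^\eps)).
\end{align*}
Rearranging and integrating over $\dom$ gives
\begin{align*}
  \int_\dom h(u(w^\eps))\dx \leq \int_\dom h(v^0)\dx - \int_\dom w^\eps \cdot (v^0 - u(w^\eps))\dx.
\end{align*}

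The next step identifies the correction integral with the Hilbertian norm. Since $w^\eps \in D(L) \hookrightarrow W^{1,\infty}(\dom)$ (as $m>d/2+1$) and $u(w^\eps) \in L^\infty(\dom)$, while $v^0 \in L^1(\dom)$, the Lebesgue integral $\int_\dom w^\eps \cdot (v^0-u(w^\eps))\dx$ is well-defined and coincides with the duality pairing $\langle v^0-u(w^\eps),w^\eps\rangle_{D(L)',D(L)}$ (noting that $L^1(\dom)\hookrightarrow D(L)'$ through Sobolev embedding). Substituting the identity $v^0-u(w^\eps) = \eps L^*Lw^\eps$, applying the defining property of $L^*$, and using $\|Lw\|_{L^2(\dom)}=\|w\|_{D(L)}$ from Proposition \ref{prop.L}, we find
\begin{align*}
  \int_\dom w^\eps \cdot (v^0 - u(w^\eps))\dx = \eps\langle L^*Lw^\eps, w^\eps\rangle_{D(L)',D(L)} = \eps(Lw^\eps,Lw^\eps)_{L^2(\dom)} = \eps\|Lw^\eps\|_{L^2(\dom)}^2.
\end{align*}
Combining the two displays yields the slightly stronger inequality
\begin{align*}
  \int_\dom h(u(w^\eps))\dx + \eps\|Lw^\eps\|_{L^2(\dom)}^2 \leq \int_\dom h(v^0)\dx,
\end{align*}
and the claim follows since $\eps\|Lw^\eps\|^2 \geq \tfrac{\eps}{2}\|Lw^\eps\|^2$.

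The main obstacle to verify is not a computational one but rather the clean justification of the duality manipulation: that the Lebesgue integral $\int w^\eps\cdot(v^0-u(w^\eps))\dx$ actually coincides with the $D(L)'$–$D(L)$ pairing, and that the latter equals $\eps\|Lw^\eps\|_{L^2}^2$ via the relation between $L$, $L^*$, and the $H=L^2(\dom)$ inner product. Both facts are standard consequences of the construction of $L$ in Proposition \ref{prop.L} together with the embedding $D(L)\hookrightarrow L^\infty(\dom)$, which ensures all three pairings (Lebesgue integral, $H$ inner product when applicable, and $D(L)'$–$D(L)$ duality) agree on their common domains via density. Everything else reduces to the elementary convexity inequality for $h$ and the definition of $R_\eps$.
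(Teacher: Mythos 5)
Your argument is correct, and is the natural convexity/Fenchel inequality proof: with $w^\eps = R_\eps(v^0)$ the defining relation $v^0 = u(w^\eps) + \eps L^*Lw^\eps$ combined with $h(v^0) \geq h(u(w^\eps)) + w^\eps\cdot(v^0-u(w^\eps))$ (valid pointwise since $v^0\ge 0$ a.e.\ by the finiteness of $\int h(v^0)$, and $u(w^\eps)>0$) and the identity $\langle L^*Lw^\eps,w^\eps\rangle_{D(L)',D(L)} = \|Lw^\eps\|_{L^2}^2$ directly yields the estimate. In fact you obtain the sharper bound with $\eps$ in place of $\eps/2$; the lemma's statement keeps the factor $\tfrac{\eps}{2}$ only to match the definition \eqref{3.ent} of the regularized entropy $\mathcal{H}$, so this discrepancy is not an error. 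The only point worth spelling out slightly more is the justification that the Lebesgue integral and the $D(L)'$–$D(L)$ pairing coincide: since $v^0\in L^1(\dom)$ and $u(w^\eps)\in L^\infty(\dom)\subset L^1(\dom)$, the difference $v^0-u(w^\eps)$ is a genuine $L^1$ function (which happens to equal $\eps L^*Lw^\eps$ under the embedding $L^1\hookrightarrow D(L)'$), and its pairing against $w^\eps\in D(L)\hookrightarrow L^\infty(\dom)$ is the integral — so no $\infty-\infty$ ambiguity can arise in the rearrangement. This is the same route as in the cited reference.
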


 \begin{lemma}\label{lem.lpmult}
Let $1 \leq k_{1},k_{2},k_{3},l_{1},l_{2},l_{3} <\infty$ as well as $p,q,m \geq 1$. If $u \in L^{k_{3}}(\Omega;L^{k_{2}}(0,T;L^{k_{1}}(\dom)))$ and $v \in L^{l_{3}}(\Omega,L^{l_{2}}(0,T;L^{l_{1}}(\dom)))$, then $uv \in L^{m}(\Omega,L^{p}(0,T;L^{q}(\dom)))$, with $q=\frac{k_{1}l_{1}}{k_{1}+l_{1}}$, $p=\frac{k_{2}l_{2}}{k_{2}+l_{2}}$, $m=\frac{k_{3}l_{3}}{k_{3}+l_{3}}$.
\end{lemma}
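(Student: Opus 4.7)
The statement is essentially an iterated Hölder inequality in the three nested variables (space, time, probability), and the exponents in the hypothesis satisfy the conjugate relations
\[
\tfrac{1}{q}=\tfrac{1}{k_1}+\tfrac{1}{l_1},\qquad \tfrac{1}{p}=\tfrac{1}{k_2}+\tfrac{1}{l_2},\qquad \tfrac{1}{m}=\tfrac{1}{k_3}+\tfrac{1}{l_3}.
\]
So the plan is simply to invoke Hölder three times in succession, peeling off one variable at a time.

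First I would fix $(\omega,t)$ and apply the classical Hölder inequality on $\dom$ with conjugate pair $(k_1/q, l_1/q)$, obtaining the pointwise-in-$(t,\omega)$ bound
\[
\|u(\omega,t,\cdot)v(\omega,t,\cdot)\|_{L^q(\dom)}\leq \|u(\omega,t,\cdot)\|_{L^{k_1}(\dom)}\,\|v(\omega,t,\cdot)\|_{L^{l_1}(\dom)}.
\]
Next, fixing $\omega$ and raising the previous inequality to the $p$-th power, I would integrate in $t\in(0,T)$ and apply Hölder on $(0,T)$ with conjugate pair $(k_2/p, l_2/p)$ to arrive at
\[
\|u(\omega,\cdot,\cdot)v(\omega,\cdot,\cdot)\|_{L^p(0,T;L^q(\dom))}\leq \|u(\omega,\cdot,\cdot)\|_{L^{k_2}(0,T;L^{k_1}(\dom))}\,\|v(\omega,\cdot,\cdot)\|_{L^{l_2}(0,T;L^{l_1}(\dom))}.
\]
Finally, raising this to the $m$-th power, integrating over $\Omega$, and applying Hölder with conjugate pair $(k_3/m, l_3/m)$ yields
\[
\|uv\|_{L^m(\Omega;L^p(0,T;L^q(\dom)))}\leq \|u\|_{L^{k_3}(\Omega;L^{k_2}(0,T;L^{k_1}(\dom)))}\,\|v\|_{L^{l_3}(\Omega;L^{l_2}(0,T;L^{l_1}(\dom)))},
\]
which is the claim. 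Measurability of the product $uv$ as a jointly measurable function follows from the product structure of the $\sigma$-algebras. There is no substantive obstacle here: the only thing to check is that all the conjugate exponents used are $\geq 1$, which is equivalent to $q\leq k_1,l_1$, $p\leq k_2,l_2$, and $m\leq k_3,l_3$, and these are immediate consequences of the definitions of $q,p,m$ as harmonic means.
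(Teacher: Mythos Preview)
Your proof is correct and follows the same approach as the paper: both arguments are a threefold application of H\"older's inequality, once in each of the nested variables $x$, $t$, $\omega$. The paper compresses the three steps into a single chain and then records the exponent relations, while you peel off one layer at a time, but the substance is identical.
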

\begin{proof}
The proof is a repeated application of H{\"o}lder's inequality:
 \begin{align*}
    &\left\| u v \right\|_{L^{m}(\Omega,L^{p}(0,T;L^{q}(\dom)))}= \left(\int_{\Omega}\left(\int_{0}^{T}\left(\int_{\dom}\left(uv\right)^{q}\dx\right)^{\frac{p}{q}}\dt\right)^{\frac{m}{p}} d\mu(\omega) \right)^{\frac{1}{m}}\\
    &\leq \left(\int_{\Omega}\left(\int_{0}^{T}\left(\int_{\dom}\left(u\right)^{qp_{1}}\dx\right)^\frac{pp_{2}}{qp_{1}}\dt\right)^{\frac{m p_{3}}{pp_{2}}}d\mu(\omega)\right)^{\frac{1}{m p_{3}}}
    \left(\int_{\Omega}\left(\int_{0}^{T}\left(\int_{\dom}\left(v\right)^{qq_{1}}\dx\right)^\frac{pq_{2}}{qq_{1}}\dt\right)^{\frac{m q_{3}}{pq_{2}}}d\mu(\omega)\right)^{\frac{1}{m q_{3}}}\\
 \end{align*}
 By the assumptions, the following relations need to hold:
 \begin{align*}
     & qp_{1}=k_{1},\qquad\frac{pp_{2}}{qp_{1}} = \frac{k_{2}}{k_{1}},\qquad\frac{m p_{3}}{pp_{2}}=\frac{k_{3}}{k_{2}},\\
     & qq_{1}=l_{1},\qquad\frac{pq_{2}}{qq_{1}} =\frac{l_{2}}{l_{1}},\qquad\frac{mq_{3}}{pq_{2}}=\frac{l_{3}}{l_{2}}.
 \end{align*}
 In addition,
 \begin{align*}
      &\frac{1}{q}=\frac{1}{q}\left(\frac{1}{p_{1}}+\frac{1}{q_{1}}\right)=\frac{1}{k_{1}}+\frac{1}{l_{1}}=\frac{k_{1}+l_{1}}{k_{1}l_{1}},& q=\frac{k_{1}l_{1}}{k_{1}+l_{1}},\\
     &\frac{1}{p}=\frac{1}{p}\left(\frac{1}{p_{2}}+\frac{1}{q_{2}}\right)=\frac{1}{k_{2}}+\frac{1}{l_{2}}=\frac{k_{2}+l_{2}}{k_{2}l_{2}},& p=\frac{k_{2}l_{2}}{k_{2}+l_{2}},\\
      &\frac{1}{m}=\frac{1}{m}\left(\frac{1}{p_{3}}+\frac{1}{q_{3}}\right)=\frac{1}{k_{3}}+\frac{1}{l_{3}}=\frac{k_{3}+l_{3}}{k_{3}l_{3}},& m=\frac{k_{3}l_{3}}{k_{3}+l_{3}}.
 \end{align*}
\end{proof}

\bibliographystyle{abbrv}
\bibliography{references}

\end{document}